\newcommand{\mbb}[1]{\mathbb{#1}}
\newcommand{\mc}[1]{\mathcal{#1}}
\newcommand{\R}{\mathbb{R}}
\newcommand{\Z}{\mathbb{Z}}
\newcommand{\e}{\varepsilon}
\newcommand{\vp}{\varphi}
\newcommand{\p}{\partial}
\newcommand{\ra}{\rightarrow}
\newcommand{\lra}{\longrightarrow}
\newcommand{\ov}[1]{\overline{#1}}
\newcommand\norm[1]{\left\Arrowvert #1\right\Arrowvert}
\newcommand{\osc}{\operatornamewithlimits{osc}}
\newcommand{\diam}{\operatorname{diam}}
\newtheorem{thm}{Theorem}[subsection]
\newtheorem{lem}{Lemma}[subsection]
\newtheorem{defn}{Definition}[subsection]
\newtheorem*{rem}{Remark}
\newtheorem{prop}{Proposition}[subsection]
\theoremstyle{definition}
\newtheorem*{acknowledgements}{Acknowledgements}
\numberwithin{equation}{subsection}
\title[Higher Order Converegence Rates]{Higher Order Convergence Rates in Theory of Homogenization I: Equations of Non-divergence Form}
\author{ Sunghan Kim}
\address{Department of Mathematical Sciences, Seoul National University, Seoul 151-747, Korea }
\email{ sunghan290@snu.ac.kr}
\thanks {2000 Mathematics Subject Classification: 35J15, 35J66,
74Q05, 74Q15, 74Q24}
\author{Ki-Ahm Lee}
\address{ Department of Mathematical Sciences, Seoul National University, Seoul 151-747, Korea,
          \&
             Center for Mathematical Challenges, Korea Institute for Advanced Study, Seoul 130-722, Korea}
\email{kiahm@snu.ac.kr}
\begin{document}

\begin{abstract}
We establish higher order convergence rates in the theory of periodic homogenization of both linear and fully nonlinear uniformly elliptic equations of non-divergence form. The rates are achieved by involving higher order correctors which fix the errors occurring both in the interior and on the boundary layer of our physical domain. The proof is based on a viscosity method and a new regularity theory which captures the stability of the correctors with respect to the shape of our limit profile.\end{abstract}
\maketitle
\tableofcontents

\section{Introduction}
\label{sec:intro}

We establish higher order convergence rates in the theory of periodic homogenization of both linear and fully nonlinear uniformly elliptic equations of non-divergence form. It is known that the equations containing highly oscillating variables $\frac{x}{\e}$, where the oscillation takes place periodically in the microscopic scale, exhibit a limiting behavior as $\e\ra 0$. More precisely, for the following $\e$-problems with linear operators,
\begin{equation}\tag{$L_\e$}\label{eq:l}
\begin{cases}
a_{ij}\left(\frac{x}{\e}\right)D_{ij}u^\e=f&\text{in }\Omega,\\
u^\e=g&\text{on }\p\Omega,
\end{cases}
\end{equation}
the solutions $u^\e$ converge to a function $u$ as $\e\ra 0$, which solves a boundary value problem
\begin{equation}\tag{$\ov{L}$}\label{eq:l-h}
\begin{cases}
\ov{a}_{ij}D_{ij}u=f&\text{in }\Omega,\\
u=g&\text{on }\p\Omega,
\end{cases}
\end{equation}
whose operator is homogenous (i.e., the matrix $(\bar{a}_{ij})$ is constant) with respect to the enviroment. For more details, one may refer to \cite{BLP} and \cite{JKO}. A similar behavior also exists when the operator consists of nonlinearity, namely,
\begin{equation}\tag{$F_\e$}\label{eq:nl}
\begin{cases}
F\left(D^2u^\e,x,\frac{x}{\e}\right)=0&\text{in }\Omega,\\
u^\e=g&\text{on }\p\Omega.
\end{cases}
\end{equation}
As in the linear case, the solutions $u^\e$ exhibit a limiting behavior, and the limit profile $u$ turns out to be a solution of the following PDE,
\begin{equation}\tag{$\ov{F}$}\label{eq:nl-h}
\begin{cases}
\ov{F}(D^2u,x)=0&\text{in }\Omega,\\
u=g&\text{on }\p\Omega.
\end{cases}
\end{equation}
where $\ov{F}$ is no longer oscillatory in the microscopic scale. For more details, see \cite{E2}.

In this paper, we give a quantitative analysis on the rate of convergence between the solution $u^\e$ and its limit profile $u$, and we further accelerate the rate by involving appropriate corrector functions for both interior and boundary layer of the physical domain. Finally we end up with a rigorous justification of the following two scale expansion of the solution $u^\e$:
\begin{equation}\label{eq:exp}
u^\e(x)=u(x)+\e(w_1^\e(x)+z_1^\e(x))+\cdots+\e^m(w_m^\e(x)+z_m^\e(x))+O(\e^{m-1}),
\end{equation} 
where $w_k^\e$ and $z_k^\e$ are the $k$-th order correctors which fix the error occurring in the interior and on the boundary layer respectively, and $m$ is the positive integer related to the regularity of the operator of the $\e$-problem. The above expression is explicit if the $\e$-problem is linear, but rather implicit when a nonlinearity comes in. We make a remark that our result is true also for operators with lower order dependence; essentially most of the challenges lie in proving the case for \eqref{eq:l} and \eqref{eq:nl} while the desired extensions and generalizations are fairly straightforward to obtain.

\subsection{Linear equations}\label{sec:l-intro}

Set $\Omega$ to be a bounded domain in $\mbb{R}^n$ with $C^{m+2,\alpha}$ boundary and let $f\in C^{m,\alpha}(\ov{\Omega})$ and $g\in C^{m+2,\alpha}(\ov{\Omega})$ for some exponent $0<\alpha\leq 1$ and an integer $m\geq 2$. We suppose that $A(y)=(a_{ij}(y))$, $1\leq i,j\leq n$ is a symmetric matrix-valued function defined in $\mbb{R}^n$ satisfying the following hypotheses: 

\begin{enumerate}
\item[(L1)] (Periodicity) $A(y+k)=A(y)$;
\item[(L2)] (Uniform Ellipticity) $\lambda|\xi|^2\leq a_{ij}(y)\xi_i\xi_j\leq\Lambda |\xi|^2$;
\item[(L3)] (Regularity) $\norm{A}_{C^{m,\alpha}(\mbb{R}^n)}\leq\sigma$,
\end{enumerate}
where $y,\xi\in\R^n$ and $k\in\Z^n$ and $\lambda$, $\Lambda$ and $\sigma$ are positive constants such that $\lambda\leq\Lambda$.

Our main result for linear equations can be summarized as follows.

\begin{thm}[{\bf Main Theorem I}]\label{thm:main-l} Let $m\geq 2$ be an integer and suppose that \eqref{eq:l} satisfies the structure conditions (L1)-(L3). Assume that $\{u^\e\}_{\e>0}$ is the family of the solutions of \eqref{eq:l} and $u$ is the homogenized limit of $\{u^\e\}_{\e>0}$ which solves \eqref{eq:l-h}. Then there are interior correctors $w_k^\e$ and boundary layer correctors $z_k^\e$, respectively defined by \eqref{eq:int cor-l-form-e} and \eqref{eq:bdry cor-l}, for $k=1,\ldots,m$ such that 
\begin{equation}\label{eq:main-l}
\norm{u^\e-\eta_m^\e-\theta_m^\e}_{L^\infty(\Omega)}\leq C\e^{m-1}
\end{equation}
for any $\e\in(0,1)$, where
\begin{equation*}
\eta_m^\e=u+\e w_1^\e+\e^2w_2^\e+\cdots+\e^mw_m^\e,\quad\theta_m^\e=\e z_1^\e+\e^2z_2^\e+\cdots+\e^mz_m^\e
\end{equation*}
on $\ov{\Omega}$ and $C$ is a positive constant depending only on $n,m,\alpha,\sigma,\lambda,\Lambda,\Omega,\norm{f}_{C^{m,\alpha}(\ov{\Omega})}$ and $\norm{g}_{C^{m+2,\alpha}(\ov{\Omega})}$. 
\end{thm}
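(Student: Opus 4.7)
The plan is to build a two-scale asymptotic expansion of $u^\e$ and compare it to $u^\e$ via a maximum-principle (ABP) argument. Concretely, I would (i) construct interior correctors $W_k(y,x)$, periodic in $y$, so that $\eta_m^\e=u+\sum_{k=1}^{m}\e^k W_k(\cdot/\e,\cdot)$ satisfies $L_\e\eta_m^\e=f+O(\e^{m-1})$ in $\Omega$; (ii) construct boundary-layer correctors $z_k^\e$ as $L_\e$-harmonic functions in $\Omega$ whose boundary values exactly cancel $W_k(\cdot/\e,\cdot)$ on $\p\Omega$, so that $\eta_m^\e+\theta_m^\e$ matches the Dirichlet datum $g$; and (iii) apply the Alexandrov--Bakelman--Pucci estimate to $v^\e:=u^\e-\eta_m^\e-\theta_m^\e$, which by construction vanishes on $\p\Omega$ and satisfies $L_\e v^\e=O(\e^{m-1})$.

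For (i) I insert the ansatz into $L_\e$, apply the chain rule $D_i=D_{x_i}+\e^{-1}D_{y_i}$ to each $W_k(x/\e,x)$, and match powers of $\e$. At each order I obtain a \emph{cell problem} of the form $a_{ij}(y)D_{y_i y_j}W_k(y,x)=G_k(y,x)$ on the torus $\mbb{R}^n/\mbb{Z}^n$, where $G_k$ is an explicit expression in $u$ and $W_1,\ldots,W_{k-1}$. Because the cell operator is in non-divergence form, Fredholm solvability is measured by the invariant measure $m(y)$ (the unique positive periodic solution of $D_{y_iy_j}(a_{ij}m)=0$ with $\int m=1$), and requires $\int G_k\,m\,dy=0$. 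At $k=2$ this compatibility reproduces the homogenized equation $\ov{L}u=f$ with $\ov{a}_{ij}=\int a_{ij}m\,dy$; for $k\geq 3$ it becomes a PDE on $\Omega$ with operator $\ov{L}$ for the slow-variable (i.e.\ $y$-independent) part of a lower corrector, solvable by Schauder theory since its right-hand side is a smooth expression in $u$ and the previously constructed $W_j$'s. Hypothesis (L3) together with the $C^{m+2,\alpha}(\ov{\Omega})$-regularity of $u$ (itself coming from Schauder applied to \eqref{eq:l-h}) is precisely calibrated to carry this hierarchy through $k=m$; cancelling orders $\e^{-1},\e^0,\ldots,\e^{m-2}$ then yields $L_\e\eta_m^\e-f=O(\e^{m-1})$.

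For (ii), I define $z_k^\e$ as the solution of $L_\e z_k^\e=0$ in $\Omega$ with $z_k^\e=-W_k(\cdot/\e,\cdot)$ on $\p\Omega$. Existence follows from (L2) together with the $C^{m+2,\alpha}$-regularity of $\p\Omega$, and the maximum principle delivers $\|z_k^\e\|_{L^\infty(\Omega)}\leq\|W_k\|_{L^\infty(\mbb{R}^n\times\ov{\Omega})}$ uniformly in $\e$. Since each $z_k^\e$ is exactly $L_\e$-harmonic, $L_\e\theta_m^\e=0$, so the interior residual of $\eta_m^\e+\theta_m^\e$ is the same as that of $\eta_m^\e$, while on $\p\Omega$ the corrector sums cancel by construction. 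Consequently $v^\e$ solves a zero-Dirichlet problem with right-hand side of size $O(\e^{m-1})$ in $L^\infty$, and the ABP estimate (applicable because the coefficients are uniformly elliptic and bounded by (L2)--(L3)) closes (iii).

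The heart of the argument, and where I expect most of the technical difficulty to live, is the iterated solvability and quantitative control of the cell-problem hierarchy in (i). Two coupled issues arise: first, the non-divergence character of $L_\e$ turns each Fredholm obstruction into an inhomogeneous $\ov{L}$-equation on $\Omega$ for a \emph{lower} corrector, so the hierarchy has to be extracted and solved in the correct order to avoid circular dependence on the slow-variable parts; second, each successive level consumes additional derivatives of $u$, of the invariant measure $m$, and of the already-constructed $W_j$'s, so the exponent $m+2$ in (L3) and in the regularity of $u$ is exactly what is needed to reach $k=m$ while keeping every residual bounded. Running alongside this bookkeeping is the stability of $W_k$ under perturbations of the limit profile $u$---the new regularity theory advertised in the abstract---which is what will eventually allow the same scheme to be pushed through for the fully nonlinear equation \eqref{eq:nl}. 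Once the hierarchy is in place, steps (ii) and (iii) are a routine application of the maximum principle.
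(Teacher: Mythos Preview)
Your proposal is correct and mirrors the paper's three-step architecture exactly: interior correctors arranged so that $L_\e\eta_m^\e=f+O(\e^{m-1})$, $L_\e$-harmonic boundary correctors with data $-w_k^\e|_{\p\Omega}$ to restore the Dirichlet condition, and a maximum-principle (a priori) estimate on the difference. The one substantive divergence is in how the cell-problem hierarchy is solved. You invoke the invariant measure $m$ and the Fredholm alternative, identifying $\ov a_{ij}=\int a_{ij}m\,dy$ and reading each compatibility condition as $\int G_k m\,dy=0$. The paper never introduces $m$; instead it solves each cell equation by a $\delta$-penalization and compactness argument, obtaining the unique constant $\gamma$ for which a periodic solution exists, and then writes the correctors in the explicitly separated form
\[
w_k(y,x)=\sum_{l=1}^{k}\chi^{i_1\ldots i_l}(y)\,D_{x_{i_1}\ldots x_{i_l}}\psi_{k-l}(x)+\psi_k(x),
\]
with universal periodic building blocks $\chi^{i_1\ldots i_l}$ and slow-variable functions $\psi_k$ determined by the compatibility equations $\ov a_{ij}D_{x_ix_j}\psi_k=-\sum_{l\ge 3}\ov a_{i_1\ldots i_l}D_{x_{i_1}\ldots x_{i_l}}\psi_{k-l+2}$. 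Your invariant-measure route is the classical one and arguably more direct for the linear problem; the paper's penalization/viscosity route is slightly heavier here but is precisely what carries over to the fully nonlinear Section~\ref{sec:nl}, where no invariant measure is available, and the separated form makes the derivative-counting you correctly identify as the crux completely explicit.
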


\subsection{Fully nonlinear equations}\label{sec:nl-intro}
Set $\Omega$ to be a bounded domain of $\mbb{R}^n$ with $\p\Omega\in C^{m+2,1}$ and let $g\in C^{m+2,1}(\ov{\Omega})$. Suppose that $F\in C^m(\mc{S}^n\times\ov{\Omega}\times\mbb{R}^n)$ possesses the following properties.  

\begin{enumerate}
\item[(F1)] (Periodicity) $F(M,x,y+k)=F(M,x,y)$;
\item[(F2)] (Uniform Ellipticity) $\lambda\norm{N}\leq F(M+N,x,y)-F(M,x,y)\leq\Lambda\norm{N}$;
\item[(F3)] (Regularity) $\norm{F}_{C^{m,1}(\ov{B}_L\times\ov{\Omega}\times\R^n)}\leq\sigma(1+L)$;
\item[(F4)] (Concavity) $F(tM+(1-t)P)\geq tF(M)+(1-t)F(P)$,
\end{enumerate}
where $M$, $N$, $P\in\mc{S}^n$ with $N\geq 0$, $x\in\ov{\Omega}$, $y\in\R^n$, $k\in\Z^n$, and $t\in[0,1]$ and $L>0$, and $\lambda$, $\Lambda$ and $\sigma$ are positive constants such that $\lambda\leq\Lambda$. The concavity (F4) is assumed to obtain a $C^{2,\alpha}$ interior corrector and a concave effective operator $\ov{F}$, by which we get a sufficiently smooth limit profile.

Our main result for fully nonlinear equations is summarized in the following theorem.

\begin{thm}[{\bf Main Theorem II}]\label{thm:main-nl}
Let $m\geq 2$ and assume that $F$ satisfies the structure conditions (F1)-(F4), $g\in C^{m+2,1}(\ov{\Omega})$ and $\p\Omega\in C^{m+2,1}$. Then there are interior correctors $w_k^\e$ for $k=1,\ldots,[\frac{m}{2}]+1$ and the boundary layer corrector $\theta_m^\e$, respectively defined by \eqref{eq:int cor-nl-form-e} and \eqref{eq:bdry cor-nl} such that for any $\e_*\in(0,1)$, 
\begin{equation}\label{eq:main-nl}
\norm{u^\e-\eta_m^\e-\theta_m^\e}_{L^\infty(\Omega)}\leq C\e^{[\frac{m}{2}]},\quad\forall \e\in(0,\e_*],
\end{equation}
where 
\begin{equation*}
\eta_m^\e=u+\e w_1^\e+\e^2w_2^\e+\cdots+\e^{[\frac{m}{2}]+1}w_{[\frac{m}{2}]+1}^\e
\end{equation*}
on $\ov{\Omega}$ and $C>0$ depends only on $n,m,\e_*,\sigma,\lambda,\Lambda,F,g$ and $\Omega$.
\end{thm}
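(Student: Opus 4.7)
The plan is to mimic the viscosity-comparison strategy of the linear case (Theorem~\ref{thm:main-l}), constructing interior correctors $w_k^\e$ hierarchically through a chain of cell problems and then fixing the resulting boundary mismatch by a layer term $\theta_m^\e$. Hypotheses (F3)--(F4) will be used twice: first, to ensure that the homogenized operator $\ov{F}$ inherits concavity and sufficient regularity so that, by Evans--Krylov together with Schauder theory, the limit profile satisfies $u\in C^{m+2,\alpha}(\ov{\Omega})$; second, to guarantee that every nonlinear cell problem we encounter is uniformly elliptic and admits a $C^{2,\alpha}_y$ solution.

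The first corrector $w_1(y;x,M)$ will be defined, for each parameter $(x,M)\in\ov{\Omega}\times\mc{S}^n$, as the $1$-periodic solution (unique up to an additive constant) of the nonlinear cell problem
\begin{equation*}
F(M+D_y^2 w_1,x,y)=\ov{F}(M,x)\quad\text{in }\mbb{R}^n,
\end{equation*}
and we then set $w_1^\e(x):=w_1(x/\e;x,D^2u(x))$. Higher-order correctors $w_k$ ($k\geq 2$) will be built by matching powers of $\e$ in the formal expansion of $F(D^2\eta_m^\e,x,x/\e)$. Using that each term $\e^k w_k(x,x/\e)$ contributes $\e^{k-2}D_y^2 w_k+\e^{k-1}(\text{mixed})+\e^k D_x^2 w_k$ to $D^2\eta_m^\e$, and Taylor-expanding $F$ in its matrix slot around $D^2 u+D_y^2 w_1$, one obtains at each order a \emph{linearized} cell problem of the schematic form
\begin{equation*}
F_M(D^2u+D_y^2 w_1,x,y):D_y^2 w_k = S_k[u,w_1,\dots,w_{k-1}],
\end{equation*}
whose periodic solvability, via a Fredholm-type alternative, is enforced by a compatibility condition that simultaneously identifies the correct contribution to $\ov{F}$ at that order. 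Crucially, each induction step consumes \emph{two} $x$-derivatives of $u$, since $S_k$ involves $D_x^2$ applied to earlier correctors which are themselves built on $D^2u(x)$ through the parameter $M$. Starting from $u\in C^{m+2,\alpha}$, the construction therefore stops at $k=[\frac{m}{2}]+1$; this is the origin of the exponent $[m/2]$ in the rate, in contrast with $m-1$ in the linear case where such cross-terms are absent.

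For the boundary-layer corrector I would solve an auxiliary linearized problem $\mc{L}^\e\theta_m^\e = 0$ in $\Omega$ with Dirichlet datum $-(\eta_m^\e-u)|_{\p\Omega}$, where $\mc{L}^\e$ is a linearization of $F(\cdot,x,x/\e)$ along $\eta_m^\e$. The $C^{m+2,1}$ regularity of $\p\Omega$, together with local barrier constructions, will give pointwise control on $\theta_m^\e$ and show that $\eta_m^\e+\theta_m^\e$ matches $g$ on $\p\Omega$ with residual of order $\e^{[m/2]+1}$, while the interior contribution of $\theta_m^\e$ to the equation remains of order $\e^{[m/2]}$.

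Setting $v^\e:=\eta_m^\e+\theta_m^\e$, the construction and a careful Taylor-remainder bookkeeping yield $|F(D^2 v^\e,x,x/\e)|\leq C\e^{[m/2]}$ in $\Omega$ and $|v^\e-g|\leq C\e^{[m/2]}$ on $\p\Omega$. Since $F(D^2u^\e,x,x/\e)=0$ with $u^\e=g$ on $\p\Omega$, comparing $v^\e$ and $u^\e$ through the Pucci-extremal envelopes of their difference and invoking the Alexandrov--Bakelman--Pucci estimate (cf.~\cite{CC},\cite{CIL}) will give \eqref{eq:main-nl}. The hard part will be the induction step: one must prove a \emph{stability/regularity theory for the nonlinear cell problem} that tracks how $w_1(y;x,M)$, and hence every $w_k(y;x,\cdot)$, depends smoothly on the slow parameter $(x,M)$. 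This is precisely the ``new regularity theory which captures the stability of the correctors with respect to the shape of our limit profile'' announced in the abstract, and without it the sources $S_k$ in the higher-order cell problems are not even well-defined.
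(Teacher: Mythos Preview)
Your overall architecture is right—hierarchical cell problems, a slow-variable regularity theory for the corrector (this is indeed the crux, and you identify it correctly), a two-derivative loss per step giving the exponent $[m/2]$, then a boundary-layer fix and comparison. But there are two concrete gaps.

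\textbf{Scaling of the first corrector.} You put the nonlinear cell-problem solution at order $\e$: $w_1^\e(x)=w_1(x/\e;x,D^2u(x))$ with $D_y^2w_1\not\equiv 0$. By your own bookkeeping, $\e^k w_k$ contributes $\e^{k-2}D_y^2w_k$ to $D^2\eta_m^\e$; for $k=1$ this is $\e^{-1}D_y^2w_1$, which blows up as $\e\to 0$, and no Taylor expansion around $D^2u+D_y^2w_1$ is possible. In the indexing fixed by the theorem statement, the oscillatory cell corrector must sit at order $\e^2$: one takes $w_1(y,x)=\psi_1(x)$ \emph{independent of $y$} (determined later by a compatibility condition, and in fact $\psi_1\equiv 0$ here) and sets $w_2(y,x)=w(y;D_x^2u(x),x)$. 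Then $D^2\eta_m^\e=D_x^2u+D_y^2w_2+O(\e)$ and the leading term reads $F(D_x^2u+D_y^2w_2,x,y)=\ov{F}(D_x^2u,x)=0$, after which the Taylor expansion in $\e$ proceeds.

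\textbf{Boundary-layer corrector.} You define $\theta_m^\e$ through a \emph{linearized} equation $\mc{L}^\e\theta_m^\e=0$. Then
\[
F(D^2\eta_m^\e+D^2\theta_m^\e,x,x/\e)=F(D^2\eta_m^\e,x,x/\e)+\mc{L}^\e\theta_m^\e+O(\norm{D^2\theta_m^\e}^2),
\]
and you need the quadratic remainder to be $O(\e^{[m/2]})$. But $\theta_m^\e$ solves a problem with $\e$-oscillatory coefficients, so Schauder gives no uniform-in-$\e$ control on $\norm{D^2\theta_m^\e}_{L^\infty}$ beyond $O(1)$; the remainder is then merely $O(1)$ and the estimate fails for $m\geq 2$. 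The paper avoids this entirely by letting $\theta_m^\e$ solve the \emph{nonlinear} Dirichlet problem
\[
F(D^2\eta_m^\e+D^2\theta_m^\e,x,x/\e)=F(D^2\eta_m^\e,x,x/\e)\ \text{in }\Omega,\qquad \theta_m^\e=g-\eta_m^\e\ \text{on }\p\Omega,
\]
so that adding $\theta_m^\e$ leaves the interior residual \emph{identically} unchanged and the boundary data is matched \emph{exactly}. The proof then closes with explicit barriers $\eta_m^\e+\theta_m^\e\pm(2\lambda)^{-1}C_0\e^{[m/2]}(R^2-|x|^2)$ and the comparison principle; no ABP or separate boundary-residual accounting is needed.
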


\subsection{Main steps}\label{sec:step-intro} 

In this subsection, we summarize the main strategies of this paper and make a few remarks on the key features observed in achieving the rates.
 
\paragraph{Higher order correctors and regularity theory}\label{para:1}
In order to find the next order approximation, we consider the linearized operator near the previous approximation. Since the linearized operator belongs to the same class of the previous one, we are able to proceed our argument in an inductive manner. The relationship between the current approximation and the next one is quite complicated in the nonlinear setting, unlike the linear case; however, such difficulty could be overcome by capturing the stability of correctors with respect to the shape of the limit profile, but not to the physical variable $x$. 

\paragraph{Induction arguments and compatibility conditions}\label{para:2}
Our induction argument consists of two substeps at each main step. First substep is to improve the previous approximation by constructing a globally periodic corrector and then bending it based on the shape of the limit profile. Then the improved interior approximation creates new errors, of a higher order, away from the given boundary data. The second substep is to fix the new errors by constructing a boundary layer corrector.
 
Additionally it is noteworthy that at each step of finding the $k$-th order interior corrector, we encounter a compatibility condition which uniquely determines the $(k-2)$-th order interior corrector. It illustrates the reason why the higher order asymptotic expansion \eqref{eq:exp} starts from $\e$-order but not from $\e^2$-order, as seen in many literatures (e.g., \cite{E1,E2}). It is closely related to the invariance of the quadratic rescaling of the governing equation.

\paragraph{Linearization and coupling effects}\label{para:3} 
There are two main differences between the linear and fully nonlinear settings. First the asymptotic expansion \eqref{eq:exp} is made inside of the operator for the fully nonlinear case, which creates an additional error unlike the linear case. Readers may compare the equation \eqref{eq:int cor-nl} to \eqref{eq:int cor-l}. Fortunately, all the additional errors are controllable and have no influence on determining the order of convergence rates.

Secondly, there is a coupling effect of the fast variable $y=\e^{-1}x$ and the slow variable $x$ of the interior correctors in the fully nonlinear case, unlike the linear case. Moreover, it causes the difference in the order of convergence rates as seen in Main Theorem I and II. The order is closely related to the regularity of interior correctors, and the coupling effect in the nonlinear case forces the next corrector to have two ``degrees'' less regularity than the current one (see Lemma \ref{lem:int cor-nl}).

\subsection{Historical background}\label{sec:bg-intro}
Classical results in the theory of homogenization could be found in the books \cite{BLP} and \cite{BP}, and the references therein. In particular, the notion of higher order correctors are introduced in these books, and one can find a higher order convergence rate for divergent operator on 1-dimensional space. This problem, however, is still open for higher dimensions where boundary oscillation plays a crucial role. 

Periodic homogenizations for first and second order nonlinear equations have been studied by many authors, such as Lions, Papanicolaou and Varadhan \cite{LPV}, Evans \cite{E1,E2}, Caffarelli \cite{C} and Majda and Souganidis \cite{MS} and Evans and Gomes \cite{EG}, etc. For homogenization with respect to an almost periodic or stationary ergodic environment has been considered by Ishii \cite{I}, Lions and Souganidis \cite{LS} and Caffarelli, Souganidis and Wang \cite{CSW}, etc.

Rates of convergence in the theory of periodic homogenization were considered by several authors in various circumstances; for example, Capuzzo Dolcetta and Ishii \cite{CI} and Camilli and Marchi \cite{CM} and Marchi \cite{M}, etc. In a stationary ergodic setting, also see Caffarelli and Souganidis \cite{CS}. However, as far as we know, there has been no literature concerning higher order convergence rates for homogenization of both linear and nonlinear elliptic equations in nondivergence form.

\subsection{Outline}\label{sec:ol-intro}
This paper is organized as follows. Section \ref{sec:l} is devoted to linear equations. We review the basic homogenization scheme via the viscosity method in Subsection \ref{sec:review-l}. Interior and boundary layer correctors of higher order are obtained in Subsection \ref{sec:int cor-l}. We present the proof of Main Theorem I in Subsection \ref{sec:pf main-l}. Section \ref{sec:nl} is devoted to fully nonlinear equations. The basic homogenization scheme of fully nonlinear equations is shown in Subsection \ref{sec:review-nl}. In Subsection \ref{sec:reg-nl} we investigate the regularity of the effective operator and the corrector function in the slow variable. In Subsection \ref{sec:int/bdry cor&rate-nl} we seek the higher order interior and boundary layer correctors, and finally prove Main Theorem II in Subsection \ref{sec:pf main-nl}. 

\subsection{Basic notations and terminologies}\label{sec:not-intro}
\begin{itemize}

\item $\mc{S}^n$ is the space of all $n\times n$ symmetric matrices. $\norm{M}$ denotes the ($L^2,L^2$)-norm of $M$ (i.e., $\norm{M}=\sup_{|x|=1}|Mx|$). 

\item $B_r(x)=\{y:\norm{y-x}<r\}$ where $x$ can be a point in $\R^n$ or $\mc{S}^n$. By $B_r$ we denote $B_r(0)$.

\item $Q_r(x)=(-r/2,r/2)^n\subset\mbb{R}^n$. As above, by $Q_r$ we denote $Q_r(0)$.

\item For the defintion of the classes $S(\lambda,\Lambda,f)$ and $S^*(\lambda,\Lambda,f)$ of viscosity solutions, see \cite{CC}.

\item Given $\vp,\psi\in C(\Omega)$, $\vp$ is said to touch $\psi$ by above (resp., by below) at $x_0$ in $\Omega$ if $\vp(x)\geq\psi(x)$ [resp., $\vp(x)\leq\psi(x)$] for all $x\in\Omega$ and $\vp(x_0)=\psi(x_0)$. 

\item $C^{k,\alpha}(\Omega)$ and $C^{k,\alpha}(\ov{\Omega})$ denote H\"{o}lder ($0<\alpha<1$) and Lipschitz ($\alpha=1$) spaces. $\norm{\cdot}_{C^{k,\alpha}(\Omega)}^*$ and $\norm{\cdot}_{C^{k,\alpha}(\Omega)}^{(j)}$ are adimensional norms (see Chapter 4 of \cite{GT}).

\item $D_pF$ [resp., $D_xF$] is the partial derivative in $M$-variable [resp., in $x$-variable].

\item We use the summation convention of repeated indices.

\item Unless otherwise stated, we always follow the following convention of constants: By $c_n, C_n$ we denote dimensional constants. By $c_0,c,C_0,C$ we denote the positive constants which depends only on the structure constants appearing in the structure conditions (L1)-(L3) or (F1)-(F4). By $C_{f_1,\ldots,f_k}$ and $C(f_1,\cdots,f_k)$ we denote positive constants depending on the constants in the structure conditions and further on $f_1,\ldots,f_k$ where $f_i$ can be a constant, a function, etc.
\end{itemize}


\section{Linear Equations in Non-divergence Form}\label{sec:l}

\subsection{Basic homogenization scheme}\label{sec:review-l}

Let us fix $\e>0$. The coefficient matrix $(a_{ij}(\cdot/\e))$ of \eqref{eq:l} is uniformly elliptic in $\ov{\Omega}$ with constants $\lambda$ and $\Lambda$, and belongs to $C^{m,\alpha}(\ov{\Omega})$. According to Theorem \ref{thm:reg-nl-apndx} (e) and (f), there exists a unique solution $u^\e\in C^{m+2,\alpha}(\ov{\Omega})$ of \eqref{eq:l}. In \cite{E2} it is shown that $\{u^\e\}_{\e>0}$ is uniformly bounded in $C^\alpha(\ov{\Omega})$ and hence has a limit. For the sake of completeness, we prove a weaker result that $\{u^\e\}_{\e>0}$ has a uniform modulus of continuity, which still guarantees the existence of limit.

\begin{lem}\label{lem:limit-l} Let $\{u^\e\}_{\e>0}\subset C^{m+2,\alpha}(\ov{\Omega})$ be the unique family which solve \eqref{eq:l} for each $\e>0$. Then there is a function $u\in C(\ov{\Omega})$ and a subsequence $\{u^{\e_k}\}_{k=1}^\infty$ of $\{u^\e\}_{\e>0}$ such that $u^{\e_k}\ra u$ uniformly in $\ov{\Omega}$ as $k\ra\infty$. 
\end{lem}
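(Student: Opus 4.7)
The plan is a standard Arzel\`a--Ascoli argument: establish that $\{u^\e\}_{\e>0}$ is uniformly bounded and equicontinuous on $\ov{\Omega}$, both with bounds independent of $\e$, and then extract a uniformly convergent subsequence.

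First I would obtain the uniform bound. Since $(a_{ij}(\cdot/\e))$ is uniformly elliptic with constants $\lambda,\Lambda$ that do not depend on $\e$, the Alexandroff--Bakelman--Pucci estimate (or the classical maximum principle for non-divergence form equations, as recorded in Proposition \ref{prop:l-apndx}) applied to $u^\e-g$ yields
\begin{equation*}
\norm{u^\e}_{L^\infty(\Omega)}\leq\norm{g}_{L^\infty(\p\Omega)}+C(n,\lambda,\Lambda,\diam(\Omega))\norm{f}_{L^n(\Omega)},
\end{equation*}
which is independent of $\e$.

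Next I would establish uniform equicontinuity. In the interior, the Krylov--Safonov $C^\alpha$ estimate applies to any linear equation with measurable coefficients in the ellipticity class $(\lambda,\Lambda)$, giving, for each $\Omega'\Subset\Omega$, a bound $\norm{u^\e}_{C^\alpha(\ov{\Omega'})}\leq C(n,\lambda,\Lambda,\Omega',\Omega,\norm{f}_{L^\infty},\norm{u^\e}_{L^\infty})$ that is uniform in $\e$. Up to the boundary, I would use the smoothness of $\p\Omega$ (which is $C^{m+2,\alpha}$, hence satisfies a uniform exterior sphere condition) to construct standard barriers of the form $\psi(x)=A(1-|x-x_0^*|^{-\beta})$ at each boundary point $x_0\in\p\Omega$; these barriers depend only on $n,\lambda,\Lambda,\Omega$ and the modulus of continuity of $g$, not on $\e$, and they deliver a uniform modulus of continuity of $u^\e$ on $\ov{\Omega}$ near $\p\Omega$. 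Combining the interior H\"older estimate with the boundary modulus in the customary way gives a global modulus of continuity $\omega$ such that $|u^\e(x)-u^\e(y)|\leq\omega(|x-y|)$ for all $x,y\in\ov{\Omega}$ and all $\e\in(0,1)$.

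With uniform boundedness and uniform equicontinuity in hand, the Arzel\`a--Ascoli theorem produces a sequence $\e_k\downarrow 0$ and a function $u\in C(\ov{\Omega})$ such that $u^{\e_k}\to u$ uniformly on $\ov{\Omega}$. The only step requiring genuine care is the boundary regularity: one must verify that the barriers can be chosen uniformly in $\e$, which in turn relies on the uniform ellipticity assumption (L2) and the smoothness of $\p\Omega$ rather than on (L1) or (L3). Everything else is a direct application of estimates that hold for the entire class of uniformly elliptic non-divergence form equations, so the constants are automatically $\e$-independent.
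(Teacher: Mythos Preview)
Your argument is correct and follows essentially the same route as the paper: uniform $L^\infty$ bound from the maximum principle (Proposition \ref{prop:l-apndx} (a)), equicontinuity from Krylov--Safonov interior estimates combined with barrier constructions at the boundary via the exterior sphere condition, and then Arzel\`a--Ascoli. The paper simply packages the equicontinuity step into a single invocation of Theorem \ref{thm:reg-nl-apndx} (d), which is precisely the modulus-of-continuity result you unpack by hand.
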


\begin{proof} 
We have $u^\e\in S(\lambda,\Lambda,f)$ in $\Omega$ for all $\e>0$ by the assumption (L2). By the setting, $g$ has a modulus of continuity $\rho(r)=[g]_{C^\alpha(\ov{\Omega})}r^\alpha$. Since $\p\Omega\in C^{m+2,\alpha}$, $\Omega$ satisfies a uniform sphere condition, say with radius $R>0$. Thus, Theorem \ref{thm:reg-nl-apndx} (d) implies that $u^\e$ has a modulus of continuity $\rho^*$, which depends only on $n,\lambda,\Lambda,\norm{f}_{L^\infty(\Omega)},\norm{g}_{L^\infty(\Omega)},\diam(\Omega),R$ and $\rho$. 

As the modulus of continuity $\rho^*$ is independent on $\e$, the family $\{u^\e\}_{\e>0}$ is equicontinuous on $\ov{\Omega}$. Moreover, by an a priori estimate we have $\norm{u^\e}_{L^\infty(\Omega)}\leq C(\norm{f}_{L^\infty(\Omega)}+\norm{g}_{L^\infty(\Omega)})$, where $C$ depends only on $n$, $\lambda$, $\Lambda$ and $\diam(\Omega)$, for each $\e>0$.

Now the conditions for the Arzela-Ascoli theorem are met, which ensures the existence of a subsequence $\{u^{\e_k}\}_{k=1}^\infty$ of $\{u^\e\}_{\e>0}$ which converges uniformly in $\ov{\Omega}$.
\end{proof}

The limit function $u$ will later turn out to be unique and satisfy \eqref{eq:l-h} in the classical sense. The next lemma plays a key role in proving this fact. The proof can be also found in \cite{AL}; nevertheless we contain the proof for completeness.

\begin{lem}\label{lem:key-l}
For each $M\in\mc{S}^n$ there exists a unique $\gamma\in\mbb{R}$ for which the following equation admits a 1-periodic solution 
\begin{equation}\label{eq:key-l}
a_{ij}D_{y_iy_j}w+a_{ij}M_{ij}=\gamma\quad\text{in }\mbb{R}^n.
\end{equation}
Moreover, the solutions of \eqref{eq:key-l} lie in $C^{2,\alpha}(\mbb{R}^n)$ and are unique up to an additive constant.
\end{lem}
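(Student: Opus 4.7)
The plan is to construct $(w,\gamma)$ by the standard ergodic approximation. For each $\delta>0$, I would first consider the auxiliary problem
\[
-\delta w_\delta+a_{ij}(y)D_{y_iy_j}w_\delta=-a_{ij}(y)M_{ij}\quad\text{in }\mbb{R}^n,
\]
seeking $w_\delta$ in the class of $1$-periodic $C^{2,\alpha}$ functions. Existence and uniqueness of such a $w_\delta$ follow from linear elliptic Schauder-Fredholm theory on the compact torus $\mbb{T}^n$; the zeroth-order term $-\delta$ removes the kernel (the constants) of the pure second-order operator, so no solvability condition intervenes at this stage.

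Two uniform-in-$\delta$ estimates then drive the limit passage. First, the maximum principle gives $\norm{\delta w_\delta}_{L^\infty(\mbb{R}^n)}\leq C\norm{M}$: at a point $y^+$ where $w_\delta$ attains its periodic maximum one has $D^2w_\delta(y^+)\leq 0$, so (L2) forces $a_{ij}(y^+)D_{y_iy_j}w_\delta(y^+)\leq 0$, and the equation yields $\delta w_\delta(y^+)\leq a_{ij}(y^+)M_{ij}\leq n\Lambda\norm{M}$; the symmetric computation at the minimum of $w_\delta$ closes the two-sided bound. Second, with the right-hand side $\delta w_\delta-a_{ij}M_{ij}$ now bounded uniformly in $\delta$, applying the Krylov-Safonov Harnack inequality to the nonnegative $1$-periodic function $w_\delta-\min w_\delta$ on balls covering a fundamental cell yields $\osc_{\mbb{R}^n}w_\delta\leq C$ with $C$ independent of $\delta$. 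After subtracting $w_\delta(0)$ and invoking interior $C^{2,\alpha}$ Schauder estimates, one extracts a subsequence $\delta_k\to 0$ along which $w_{\delta_k}-w_{\delta_k}(0)\to w$ in $C^2_{\operatorname{loc}}$, where $w\in C^{2,\alpha}(\mbb{R}^n)$ is $1$-periodic. Since $-\delta_k w_{\delta_k}$ is uniformly bounded and has oscillation at most $\delta_k C\to 0$, it converges uniformly to a constant $\gamma$, and passing to the limit in the $\delta_k$-equation delivers \eqref{eq:key-l}.

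Uniqueness of $\gamma$ and of $w$ modulo constants both follow from the maximum principle. Given two admissible pairs $(w_1,\gamma_1)$, $(w_2,\gamma_2)$, the difference $v:=w_1-w_2$ is $1$-periodic and satisfies $a_{ij}D_{y_iy_j}v=\gamma_1-\gamma_2$; at a periodic maximum $y^+$ of $v$ one has $D^2v(y^+)\leq 0$, whence $\gamma_1-\gamma_2\leq 0$, and the symmetric inequality at the minimum forces $\gamma_1=\gamma_2$. With $\gamma_1=\gamma_2$, $v$ is a bounded $1$-periodic solution of $a_{ij}D_{y_iy_j}v=0$, and the strong maximum principle applied at an interior extremum forces $v\equiv\text{const}$. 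The $C^{2,\alpha}(\mbb{R}^n)$ regularity of any admissible $w$ is then immediate from interior Schauder estimates applied to \eqref{eq:key-l} rewritten as $a_{ij}D_{y_iy_j}w=\gamma-a_{ij}M_{ij}\in C^{m,\alpha}(\mbb{R}^n)$.

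The main obstacle I anticipate is the $\delta$-independent oscillation bound on $w_\delta$: without it only a constant survives in the limit and the corrector itself is lost. It is precisely here that the non-divergence nature of the operator precludes a simple energy argument and forces the use of Krylov-Safonov Harnack; fortunately periodicity rescues the argument by confining the analysis to a compact region on which the Harnack constant is uniform and no boundary effects appear.
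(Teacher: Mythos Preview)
Your proposal is correct and follows essentially the same ergodic-approximation strategy as the paper: penalize by $-\delta w_\delta$, obtain the uniform bound on $\delta w_\delta$ from the maximum principle, the $\delta$-independent oscillation bound from Harnack applied to $w_\delta-\min w_\delta$, and pass to the limit via Schauder estimates and compactness. The only cosmetic differences are that the paper obtains $w_\delta$ by Perron's method rather than Fredholm theory, and argues uniqueness of $\gamma$ and of $w$ via a touching argument and a Liouville theorem rather than your direct maximum-principle evaluations; all of these are equivalent formulations of the same ideas.
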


To prove this lemma we consider the following penalized problem for $\delta\in(0,1)$

\begin{lem}\label{lem:key-l-d} Let $M\in\mc{S}^n$. There exists a unique bounded 1-periodic solution $w^\delta$ of 
\begin{equation}\label{eq:key-l-d}
a_{ij}D_{y_iy_j}w^\delta+a_{ij}M_{ij}-\delta w^\delta=0\quad\text{in }\mbb{R}^n.
\end{equation}
for each $\delta\in(0,1)$. Moreover, $w^\delta$ lies in $C^{2,\alpha}(\mbb{R}^n)$ with the estimate
\begin{equation}\label{eq:key-l-d-C2a-dwd}
\sup_{0<\delta<1}\norm{\delta w^\delta}_{C^{2,\alpha}(\mbb{R}^n)}\leq C\norm{M}.
\end{equation}
\end{lem}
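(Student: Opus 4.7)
The plan is to treat \eqref{eq:key-l-d} as an equation on the torus $\mbb{T}^n$, split the argument into uniqueness, existence, and the $\delta$-uniform estimate \eqref{eq:key-l-d-C2a-dwd}, where the last is the real content of the statement.

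\emph{Uniqueness via maximum principle.} If $w_1$ and $w_2$ are two bounded 1-periodic solutions, the difference $v = w_1 - w_2$ satisfies $a_{ij}D_{y_iy_j}v = \delta v$. By periodicity $v$ attains its global maximum at some $y_0$, and $D^2 v(y_0) \leq 0$ combined with uniform ellipticity forces $\delta v(y_0) \leq 0$; applying the same to $-v$ yields $v \equiv 0$.

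\emph{Existence via continuity method.} The constant $C_0\norm{M}/\delta$, with $C_0 := n\Lambda$, is a classical supersolution since $a_{ij}M_{ij} - C_0\norm{M} \leq 0$ by (L2), and $-C_0\norm{M}/\delta$ is a subsolution; comparison therefore pins any solution to the a priori bound $\norm{w^\delta}_{L^\infty(\R^n)} \leq C_0\norm{M}/\delta$. To produce such a solution I would run the continuity method on the family $L_t := (ta_{ij}+(1-t)\delta_{ij})D_{y_iy_j} - \delta I$ acting on $C^{2,\alpha}(\mbb{T}^n)$. At $t=0$, $L_0 = \Delta - \delta I$ is an isomorphism onto $C^\alpha(\mbb{T}^n)$ via Fourier series (all eigenvalues equal $-|k|^2-\delta < 0$), and at $t=1$ we recover the target operator. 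The a priori estimate $\norm{w}_{C^{2,\alpha}(\mbb{T}^n)} \leq C_\delta \norm{L_t w}_{C^\alpha(\mbb{T}^n)}$ required by the method follows by feeding the $L^\infty$ barrier into interior Schauder estimates (available since $a_{ij} \in C^{m,\alpha}$), so all $L_t$ are invertible.

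\emph{The uniform $C^{2,\alpha}$ estimate.} This is the heart of the lemma and uses rescaling. Setting $v^\delta := \delta w^\delta$, the equation becomes
\begin{equation*}
a_{ij}(y)D_{y_iy_j}v^\delta = \delta v^\delta - \delta a_{ij}(y)M_{ij}.
\end{equation*}
The barrier gives $\norm{v^\delta}_{L^\infty(\R^n)} \leq C\norm{M}$ uniformly in $\delta$, and since $\delta < 1$ the right-hand side is bounded in $L^\infty$ by $C\norm{M}$. The Krylov-Safonov $C^\alpha$ estimate on $Q_2$ (promoted to a global bound on $\R^n$ by 1-periodicity) then yields $\norm{v^\delta}_{C^\alpha(\R^n)} \leq C\norm{M}$. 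With this in hand the right-hand side is now $C^\alpha$ with norm at most $\delta(\norm{v^\delta}_{C^\alpha} + \norm{a_{ij}M_{ij}}_{C^\alpha}) \leq C\norm{M}$, and the interior Schauder estimate on unit cubes together with periodicity upgrades this to $\norm{v^\delta}_{C^{2,\alpha}(\R^n)} \leq C\norm{M}$, which is exactly \eqref{eq:key-l-d-C2a-dwd}.

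The main obstacle is that any naive bound on $w^\delta$ itself blows up like $\delta^{-1}$; what rescues the estimate is that passing to $v^\delta = \delta w^\delta$ moves one factor of $\delta$ from the left to the right of the equation, after which the condition $\delta < 1$ absorbs both the zeroth-order term and the residual $\delta a_{ij}M_{ij}$ in the Krylov-Safonov/Schauder bootstrap, breaking the circularity that would otherwise prevent a $\delta$-independent bound.
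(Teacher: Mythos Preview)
Your proof is correct and follows the same broad outline as the paper (constant barriers for the $L^\infty$ bound, then Schauder for $C^{2,\alpha}$), but differs tactically in two places. For existence the paper invokes Perron's method in the viscosity framework with the same constant barriers $\pm n\sigma\norm{M}/\delta$, rather than the continuity method; your route is more classical and self-contained for this purely linear setting, while the paper's choice keeps the argument parallel to the fully nonlinear case treated later. For the uniform $C^{2,\alpha}$ estimate the paper avoids the Krylov--Safonov step entirely by absorbing $-\delta w^\delta$ into the operator as a zeroth-order coefficient $c=-\delta\leq 0$: the interior Schauder constant then depends only on $\norm{c}_{C^\alpha}=\delta<1$, the right-hand side is simply $-a_{ij}M_{ij}\in C^\alpha$, and one obtains $\norm{w^\delta}_{C^{2,\alpha}}\leq c_0(\norm{w^\delta}_{L^\infty}+n\sigma\norm{M})\leq C\norm{M}/\delta$ in a single application, after which multiplying by $\delta$ finishes. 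Your bootstrap through $v^\delta=\delta w^\delta$ and Krylov--Safonov works too, but carries an implicit extra iteration if the Krylov--Safonov exponent $\tilde\alpha$ happens to lie below the given $\alpha$; the paper's version sidesteps this.
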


\begin{proof}
In view of Theorem \ref{thm:exist-nl-apndx} (a) (with $F(N,y)=a_{ij}(y)N_{ij}+a_{ij}(y)M_{ij}$), we know that \eqref{eq:key-l-d} has a comparison principle. By the hypothesis (L2), all the eigenvalues of $(a_{ij})$ lie in the interval $[\lambda,\Lambda]$, which implies that
\begin{equation}\label{eq:key-l-d-Ca-AM}
\norm{a_{ij}M_{ij}}_{L^\infty(\mbb{R}^n)}\leq n\norm{A(y)}_{C^\alpha(\mbb{R}^n)}\norm{M}\leq n\sigma\norm{M}.
\end{equation}
It then follows that the constant functions $w_-^\delta=-n\sigma\norm{M}/\delta$ and $w_+^\delta=n\sigma\norm{M}/\delta$ are a subsolution and a supersolution respectively to \eqref{eq:key-l-d} for each $\delta\in(0,1)$. Thus, Perron's method (Theorem \ref{thm:exist-nl-apndx} (b) with $F(N,y)=a_{ij}(y)N_{ij}+a_{ij}(y)M_{ij}$, $u=w_-^\delta$ and $v=w_+^\delta$) ensures that there is a unique bounded 1-periodic viscosity solution $w^\delta\in C(\mbb{R}^n)$. It is immediate that
\begin{equation}\label{eq:key-l-d-linf-dwd}
\sup_{0<\delta<1}\norm{\delta w^\delta}_{L^\infty(\mbb{R}^n)}\leq n\sigma\norm{M}.
\end{equation}

Let us apply an interior Schauder estimate in a ball $B_{\sqrt{n}}(y_0)$ for $y_0\in\mbb{R}^n$ (see Theorem \ref{thm:reg-nl-apndx} (e)). Then $w^\delta\in C^{2,\alpha}(B_{\sqrt{n}/2}(y_0))$ and there is $c_0$ such that
\begin{equation*}
\norm{w^\delta}_{C^{2,\alpha}(B_{\sqrt{n}/2}(y_0))}^*\leq c_0\left(\norm{w^\delta}_{L^\infty(B_{\sqrt{n}}(y_0))}+n\sigma\norm{M}\right)\leq 2n\delta^{-1}c_0\sigma\norm{M}.
\end{equation*}
Since $y_0$ was chosen in an arbitrary way and $B_{\sqrt{n}/2}(y_0)$ contains a periodic cube, the estimate \eqref{eq:key-l-d-C2a-dwd} is verified with $C=2n\delta^{-1}c_0\sigma$.
\end{proof}

We observe that the oscillation of $w^\delta$ is bounded independent of $\delta$, although its $L^\infty$ norm is not bounded in a uniform way.

\begin{lem}\label{lem:key-l-osc} Let $M\in\mc{S}^n$ and $w^\delta$ be the unique solution to \eqref{eq:key-l-d}. Then 
\begin{equation}\label{eq:key-l-osc}
\sup_{0<\delta<1}\osc_{\mbb{R}^n}w^\delta\leq C\norm{M}.
\end{equation}
Moreover,
\begin{equation}\label{eq:key-l-d-C2a-twd}
\sup_{0<\delta<1}\norm{\tilde{w}^\delta}_{C^{2,\alpha}(B_1(y_0))}
\leq C\norm{M},
\end{equation}
where $\tilde{w}^\delta:=w^\delta-w^\delta(0)$.
\end{lem}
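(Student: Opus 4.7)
The plan is to pass from the $\delta$-dependent $L^\infty$-bound in Lemma \ref{lem:key-l-d} to a $\delta$-uniform oscillation bound by combining the Krylov--Safonov Harnack inequality with the periodicity of $w^\delta$, and then to upgrade to a $C^{2,\alpha}$-bound via the interior Schauder estimate. The single crucial observation is that although $w^\delta$ itself may blow up like $1/\delta$, the product $\delta w^\delta$ is bounded independently of $\delta$ by \eqref{eq:key-l-d-linf-dwd}, so when we rewrite \eqref{eq:key-l-d} as
\begin{equation*}
a_{ij}(y)\,D_{y_iy_j}w^\delta = \delta w^\delta - a_{ij}(y)M_{ij} \quad \text{in } \mbb{R}^n,
\end{equation*}
the right-hand side is uniformly bounded in $L^\infty(\mbb{R}^n)$ by $2n\sigma\norm{M}$ (combining \eqref{eq:key-l-d-linf-dwd}, \eqref{eq:key-l-d-Ca-AM}, and (L2)). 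This side-steps the circular estimate one would get by naively bounding $\delta w^\delta$ by $\delta\osc w^\delta$.

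First I would introduce $\bar{w}^\delta := w^\delta - \min_{\mbb{R}^n} w^\delta \geq 0$, which exists because $w^\delta \in C^{2,\alpha}(\mbb{R}^n)$ is 1-periodic and hence attains its minimum at some point $y_* \in \ov{Q}_1(0)$. The function $\bar{w}^\delta$ satisfies the same equation (constants being killed by $a_{ij}D_{ij}$) with right-hand side of uniform $L^\infty$-norm $\leq 2n\sigma\norm{M}$. Choosing a radius $R = 2\sqrt{n}$ so that $B_{\sqrt n}(y_*) \supset \ov{Q}_1(y_*)$, I would apply Krylov--Safonov Harnack to the nonnegative solution $\bar{w}^\delta$ on $B_{2\sqrt n}(y_*)$:
\begin{equation*}
\sup_{B_{\sqrt n}(y_*)} \bar{w}^\delta \leq C\Bigl(\inf_{B_{\sqrt n}(y_*)} \bar{w}^\delta + \bigl\|\delta w^\delta - a_{ij}M_{ij}\bigr\|_{L^n(B_{2\sqrt n}(y_*))}\Bigr) \leq C\norm{M},
\end{equation*}
since the infimum vanishes (the ball contains $y_*$) and the $L^n$-norm of the source is bounded by the uniform $L^\infty$-bound times a dimensional constant. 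By periodicity, $\sup_{Q_1(y_*)} \bar{w}^\delta = \osc_{\mbb{R}^n} w^\delta$, and since $Q_1(y_*) \subset B_{\sqrt n}(y_*)$, this yields \eqref{eq:key-l-osc}.

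Finally, for \eqref{eq:key-l-d-C2a-twd}, the function $\tilde{w}^\delta = w^\delta - w^\delta(0)$ satisfies the same equation as $\bar{w}^\delta$, and $\norm{\tilde{w}^\delta}_{L^\infty(\mbb{R}^n)} \leq \osc_{\mbb{R}^n} w^\delta \leq C\norm{M}$ by the step just completed. Applying the interior Schauder estimate (Proposition \ref{prop:l-apndx} (b), exactly as at the end of the proof of Lemma \ref{lem:key-l-d}) on $B_2(y_0)$ gives
\begin{equation*}
\norm{\tilde{w}^\delta}_{C^{2,\alpha}(B_1(y_0))} \leq c_0\bigl(\norm{\tilde{w}^\delta}_{L^\infty(B_2(y_0))} + \norm{\delta w^\delta - a_{ij}M_{ij}}_{C^\alpha(B_2(y_0))}\bigr) \leq C\norm{M},
\end{equation*}
where the $C^\alpha$-norm of the source is controlled by $\norm{\delta w^\delta}_{C^{2,\alpha}(\mbb{R}^n)} \leq C\norm{M}$ from \eqref{eq:key-l-d-C2a-dwd} together with (L3). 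The only delicate point in the argument is the choice of the Harnack radius: it must be large enough (after centering at the minimum) to cover a full period, and the main temptation to avoid is routing the estimate through $\delta\,\osc w^\delta$ instead of $\delta w^\delta$, which would fail to close.
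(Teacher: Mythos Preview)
Your proof is correct and follows essentially the same approach as the paper: shift $w^\delta$ by its minimum, apply the Harnack inequality on a ball containing a full period (exploiting that $\delta w^\delta$, not $w^\delta$, is uniformly bounded so the source term is controlled independently of $\delta$), and then upgrade to $C^{2,\alpha}$ via interior Schauder. The only cosmetic differences are that you move the term $\delta w^\delta$ entirely to the right-hand side (the paper keeps $-\delta\hat w^\delta$ in the operator and uses $\delta\min w^\delta$ as part of the source), and you center the Harnack ball at the minimizer $y_*$ rather than at an arbitrary point; neither change affects the argument.
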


\begin{proof} Define $\hat{w}^\delta(y):=w^\delta(y)-\min_{\mbb{R}^n}w^\delta\geq 0$ in $\R^n$. Note that $\hat{w}^\delta$ and $w^\delta$ achieve its global maximum and minimum, and $\hat{w}^\delta\in C^{2,\alpha}(\R^n)$. Additionally, $\osc_{\mbb{R}^n}w^\delta=\max_{\mbb{R}^n}\hat{w}^\delta$. Moreover, plugging $\hat{w}^\delta$ into \eqref{eq:key-l-d} we obtain
\begin{equation}\label{eq:key-l-d-whd}
a_{ij}D_{y_iy_j}\hat{w}^\delta-\delta\hat{w}^\delta=\delta\min_{\mbb{R}^n}w^\delta-a_{ij}M_{ij}\quad\text{in }\mbb{R}^n.
\end{equation}

Let us restrict our domain to $B_{\sqrt{n}}(y_0)$ where $y_0$ is an arbitrary point in $\mbb{R}^n$. Note that $B_{\sqrt{n}/2}(y_0)$ contains a periodic cube $Q_1(y_0)$. This implies that $\sup_{B_{\sqrt{n}/2}(y_0)}\hat{w}^\delta=\sup_{\mbb{R}^n}\hat{w}^\delta$ and $\inf_{B_{\sqrt{n}/2}(y_0)}\hat{w}^\delta=\inf_{\mbb{R}^n}\hat{w}^\delta=0$. Now we apply the Harnack inequality over $B_{\sqrt{n}}(y_0)$ to \eqref{eq:key-l-d-whd} (see Theorem \ref{thm:reg-nl-apndx} (a) with $f=\delta\min_{\mbb{R}^n}w^\delta-a_{ij}M_{ij}$). Then 
\begin{equation*}
\sup_{B_{\sqrt{n}/2}(y_0)}\hat{w}^\delta\leq c_0\norm{\lambda^{-1}(\delta\min_{\mbb{R}^n}w^\delta-a_{ij}M_{ij})}_{L^\infty(B_{\sqrt{n}}(y_0))}\leq 2c_0\lambda^{-1}n\sigma\norm{M};
\end{equation*}
here we utilized \eqref{eq:key-l-d-Ca-AM} and \eqref{eq:key-l-d-linf-dwd}. Since the above bound is independent of $\delta\in(0,1)$, and since $y_0$ is an arbitrary point, we have shown \eqref{eq:key-l-osc} with $C=2c_0\lambda^{-1}n\sigma$.

Define now $\tilde{w}^\delta(y):=w^\delta(y)-w^\delta(0)$ in $\R^n$. By \eqref{eq:key-l-osc}, $|\tilde{w}^\delta|\leq\tilde{c}_0\norm{M}$ in $\R^n$ where $\tilde{c}_0=4c_0\lambda^{-1}n\sigma$. Moreover, $\tilde{w}^\delta\in C^{2,\alpha}(\mbb{R}^n)$ and satisfies
\begin{equation*}
a_{ij}D_{y_iy_j}\tilde{w}^\delta+a_{ij}M_{ij}-\delta\tilde{w}^\delta=\delta w^\delta(0)\quad\text{in }\mbb{R}^n.
\end{equation*} 
Using a similar argument when proving \eqref{eq:key-l-d-C2a-dwd}, we get 
\begin{equation*}
\begin{split}
\sup_{0<\delta<1}\norm{\tilde{w}^\delta}_{C^{2,\alpha}(B_1(y_0))}&\leq\tilde{c}_1c_0n\sigma(\lambda^{-1}+1)\norm{M},
\end{split}
\end{equation*}
which verifies \eqref{eq:key-l-d-C2a-twd} with $C=\tilde{c}_1c_0n\sigma(\lambda^{-1}+1)$.
\end{proof}

Now we are ready to prove Lemma \ref{lem:key-l}

\begin{proof}[Proof of Lemma \ref{lem:key-l}] In view of \eqref{eq:key-l-d-linf-dwd}, we can take a subsequence $\{\delta_k w^{\delta_k}(0)\}_{k=1}^\infty$ of $\{\delta w^\delta\}_{0<\delta<1}$ and a number $\gamma\in\mbb{R}$ such that $\delta_k w^{\delta_k}(0)\ra\gamma$ as $k\ra\infty$. Then \eqref{eq:key-l-osc} implies that $\delta_k w^{\delta_k}\ra\gamma$ uniformly in $\mbb{R}^n$ as $k\ra\infty$. 

On the other hand, by the compact embedding, the uniform estimate \eqref{eq:key-l-d-C2a-twd} yields that 
\begin{equation}\label{eq:key-l-conv}
\norm{\delta_kw^{\delta_k}-\gamma}_{L^\infty(\mbb{R}^n)}+\norm{\tilde{w}^{\delta_k}-w}_{C^2(\mbb{R}^n)}\lra 0\quad\text{as}\quad k\lra\infty,
\end{equation}
for some 1-periodic $w\in C^{2,\alpha}(\R^n)$. Note that one may need to take a further subsequence of $\{\delta_k\}_{k=1}^\infty$ to ensure the convergence above.  

By the stability of viscosity solutions, $w$ solves \eqref{eq:key-l} in the viscosity sense. Then the $C^{2,\alpha}(\mbb{R}^n)$-regularity of $w$ forces itself to be a classical solution. 

To this end we prove that the constant $\gamma$ is unique. Suppose to the contrary that there is another $\gamma'\in\mbb{R}$ to which a subsequence of $\{\delta w^\delta\}_{0<\delta<1}$ converges uniformly in $\mbb{R}^n$. Denote $w'$, which belongs to $C^{2,\alpha}(\mbb{R}^n)$, by the corresponding limit of a subsequence of $\{\tilde{w}^\delta\}_{0<\delta<1}$. 

Assume without lose of generality that $\gamma<\gamma'$. As $w$ and $w'$ being bounded, we are able to add a constant $t_0$ to $w$ in such a way that $w'(y_0)+t_0<w(y_0)$ at a point $y_0\in\mbb{R}^n$. Take $t_1$ by the infimum value of $t$ such that $w'+t\geq w$ in $\mbb{R}^n$. Then $w'+t_1$ touches $w$ by above at a point $y_1$. Since $w$ is a solution of \eqref{eq:key-l},
\begin{equation*}
\gamma\leq a_{ij}(y_1)D_{y_iy_j}(w'+t_1)(y_1)+a_{ij}(y_1)M_{ij}=\gamma',
\end{equation*}
which is a contradiction. It shows that the constant $\gamma$ must be unique.

Furthermore, the Liouville theorem (e.g., Theorem \ref{thm:reg-nl-apndx}) implies that the uniform convergence \eqref{eq:key-l-conv} could be made along the full sequence; i.e., the limit function is also unique.

The last assertion of Lemma \ref{lem:key-l} is also an easy consequence of the Liouville theorem.
\end{proof}

From now on we denote $w^\delta(\cdot;M)$ by the unique solution of \eqref{eq:key-l-d} for a given $M\in\mc{S}^n$. Also $\hat{w}^\delta(\cdot;M):=w^\delta(\cdot;M)-\min_{\mbb{R}^n}w^\delta(\cdot;M)$ and $\tilde{w}^\delta(\cdot;M):=w^\delta(\cdot;M)-w^\delta(0;M)$. In addition, let us write $w(\cdot;M)$ by the solution of \eqref{eq:key-l} for a given $M\in\mc{S}^n$ which is normalized by $0$; i.e., $w(0;M)=0$. 

By Lemma \ref{lem:key-l} we can understand $\gamma$ as a functional $M\mapsto\gamma(M)$ on $\mc{S}^n$. The linear structure of the equation \eqref{eq:key-l} allows us to obtain further information about the functional $\gamma$ which is stated in the next lemma. 

\begin{lem}\label{lem:eff op-l}
Let $\gamma$ be the functional on $\mc{S}^n$ obtained from Lemma \ref{lem:key-l}. 
\begin{enumerate}[(i)]
\item There is a constant symmetric matrix $(\ov{a}_{ij})$ such that $\gamma(M)=\ov{a}_{ij}M_{ij}$. 
\item The matrix $(\ov{a}_{ij})$ is elliptic with the same ellipticity constants of $(a_{ij})$; i.e., $\lambda|\xi|^2\leq\ov{a}_{ij}\xi_i\xi_j\leq\Lambda|\xi|^2$ for all $\xi\in\R^n$.
\end{enumerate}
\end{lem}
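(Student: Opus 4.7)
My plan for part (i) is to exploit the linearity of the cell equation in both $w$ and $M$, combined with the uniqueness of $\gamma$ from Lemma \ref{lem:key-l}. I would argue that for $M_1, M_2 \in \mc{S}^n$ and scalars $c_1, c_2$, the function $c_1 w(\cdot;M_1) + c_2 w(\cdot;M_2)$, where each piece is the normalized periodic solution furnished by Lemma \ref{lem:key-l}, is itself a 1-periodic $C^{2,\alpha}$ solution of the cell equation with data $c_1 M_1 + c_2 M_2$ and right-hand side $c_1 \gamma(M_1) + c_2 \gamma(M_2)$. The uniqueness of the constant on the right then forces
\[
\gamma(c_1 M_1 + c_2 M_2) = c_1 \gamma(M_1) + c_2 \gamma(M_2),
\]
so $\gamma$ is a linear functional on $\mc{S}^n$. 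Writing $\gamma(M) = \ov{a}_{ij} M_{ij}$ and replacing $(\ov{a}_{ij})$ by its symmetrization if necessary (the pairing only sees the symmetric part, since $M$ is symmetric) delivers the symmetric matrix in part (i) and the symmetry claim in (ii).

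For the ellipticity in (ii), the plan is to test the cell equation at the extrema of $w(\cdot;M)$ for the rank-one input $M = \xi \otimes \xi$. By the $C^{2,\alpha}$-regularity and periodicity, $w = w(\cdot;M)$ attains a global maximum at some $y_+$ and a global minimum at some $y_-$. At $y_+$, the Hessian $D_y^2 w(y_+)$ is negative semidefinite; combined with the positive-definiteness of $(a_{ij}(y_+))$ and the elementary fact that $\tr(aA) \leq 0$ whenever $a \geq 0$ and $A \leq 0$ (which one sees by writing $a = B^\top B$ and invoking cyclicity of the trace), this yields $a_{ij}(y_+) D_{y_iy_j} w(y_+) \leq 0$. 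Evaluating \eqref{eq:key-l} at $y_+$ and using (L2) gives
\[
\ov{a}_{ij} \xi_i \xi_j = \gamma(M) \leq a_{ij}(y_+) \xi_i \xi_j \leq \Lambda |\xi|^2,
\]
and the symmetric argument at $y_-$ produces $\ov{a}_{ij} \xi_i \xi_j \geq \lambda |\xi|^2$.

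The hard part of this lemma is essentially nonexistent once Lemma \ref{lem:key-l} is in hand: the linear structure of the cell equation together with uniqueness of $\gamma$ does all the work in (i), and the maximum principle applied to a periodic $C^{2,\alpha}$ function handles (ii). A reader accustomed to the divergence-form setting might expect an invariant-measure or integration-by-parts identity, but the sketched argument bypasses this entirely. The only minor technicality is the product-trace inequality used in (ii), which is a three-line piece of linear algebra.
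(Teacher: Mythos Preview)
Your proposal is correct. Both parts follow the same underlying ideas as the paper---linearity plus uniqueness for (i), and an extremum argument for (ii)---but you work directly with the limiting cell problem \eqref{eq:key-l} and its $C^{2,\alpha}$ solution $w(\cdot;M)$, whereas the paper routes both parts through the penalized $\delta$-problem \eqref{eq:key-l-d}. For (i) the paper shows $w^\delta(\cdot;M+cN)=w^\delta(\cdot;M)+cw^\delta(\cdot;N)$ by uniqueness of the $\delta$-solution and then passes to the limit, while you invoke uniqueness of $\gamma$ from Lemma~\ref{lem:key-l} directly; for (ii) the paper argues by contradiction that $a_{ij}D_{y_iy_j}w^\delta<0$ everywhere for suitable $\delta$, contradicting the existence of a global minimum, while you simply evaluate \eqref{eq:key-l} at the extrema of $w$ itself. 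Your route is shorter and more transparent in this linear, $C^{2,\alpha}$ setting; the paper's detour through $w^\delta$ buys nothing extra here, though it foreshadows the strategy used later in the nonlinear section where working at the approximating level is sometimes more convenient.
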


\begin{proof} The assertion (i) is a direct consequence of Lemma \ref{lem:key-l-d}, and is left to the readers. 

We prove the assertion (ii). Since the proofs are similar, we only show the first inequality. Choose any $\e>0$ and assume for a contradiction that there exists $\xi\in\mbb{R}^n$ for which $\ov{a}_{ij}\xi_i\xi_j<(\lambda-\e)|\xi|^2$. 
In view of \eqref{eq:key-l-conv}, there corresponds $\delta\in(0,1)$ for which $\norm{\delta w^\delta(\cdot;\xi\cdot\xi^t)-\ov{a}_{ij}\xi_i\xi_j}_{L^\infty(\mbb{R}^n)}<\e|\xi|^2$. For the moment we abbreviate $w^\delta(\cdot;\xi\cdot\xi^t)$ by $w^\delta$. Then
\begin{equation*}
a_{ij}D_{y_iy_j}w^\delta=\delta w^\delta-a_{ij}\xi_i\xi_j\leq\norm{\delta w^\delta-\ov{a}_{ij}\xi_i\xi_j}_{L^\infty(\mbb{R}^n)}+(\ov{a}_{ij}\xi_i\xi_j-\lambda|\xi|^2)<0
\end{equation*}
in $\R^n$, which is contradictory to the fact that $w^\delta$ achieves a global minimum.
\end{proof}

The constant matrix $(\ov{a}_{ij})$ from Lemma \ref{lem:eff op-l} is called {\it the effective coefficients} of $(a_{ij})$ in the following lemma. It is proved in \cite{E2}, but we present the proof for completeness. 

\begin{lem}\label{lem:main-l-h} Suppose that \eqref{eq:l} satisfies the structure conditions (L1)-(L2) and let $\{u^\e\}_{\e>0}\subset C^{m+2,\alpha}(\Omega)$ be the family of solutions to \eqref{eq:l}. Then there exists a unique function $u$, which has a modulus of continuity on $\ov{\Omega}$, such that $u^\e\ra u$ uniformly in $\ov{\Omega}$ as $\e\ra 0$. Moreover, $u\in C^{m+2,\alpha}(\ov{\Omega})$ and it solves 
\begin{equation}\tag{$\ov{L}$}\label{eq:l-h}
\begin{cases}
\ov{a}_{ij}D_{ij}u=f&\text{in }\Omega,\\
u=g&\text{on }\p\Omega,
\end{cases}
\end{equation}
\end{lem}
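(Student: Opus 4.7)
The plan is to combine Lemma \ref{lem:limit-l} with Evans' perturbed test function method to identify any subsequential limit as a viscosity solution of \eqref{eq:l-h}, and then invoke Schauder theory for the constant-coefficient operator $\ov L$ to upgrade this identification to $C^{m+2,\alpha}$ regularity and to promote the subsequential convergence to convergence of the full family. First I would take $u^{\e_k}\ra u$ uniformly on $\ov\Omega$ from Lemma \ref{lem:limit-l}; the boundary condition $u=g$ on $\p\Omega$ is then immediate since each $u^{\e_k}=g$ there, and the uniform modulus of continuity constructed in the proof of Lemma \ref{lem:limit-l} transfers to $u$.

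The core step is to show that $u$ solves $\ov Lu=f$ in $\Omega$ in the viscosity sense. Fix $\varphi\in C^2$ touching $u$ strictly from above at an interior point $x_0$, set $M:=D^2\varphi(x_0)$, and introduce the perturbed test function
\[
\varphi^\e(x):=\varphi(x)+\e^2\,w(x/\e;M),
\]
where $w(\cdot;M)\in C^{2,\alpha}(\R^n)$ is the normalized corrector from Lemma \ref{lem:key-l}. Because $w(\cdot;M)$ is bounded, $\varphi^\e\ra\varphi$ uniformly, so for each large $k$ the function $\varphi^\e-u^{\e_k}$ attains an interior local minimum at some $x_{\e_k}\ra x_0$. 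Comparing Hessians at $x_{\e_k}$ and using the ellipticity of $(a_{ij})$ yields
\[
a_{ij}(x_{\e_k}/\e_k)D_{ij}\varphi^{\e_k}(x_{\e_k})\geq a_{ij}(x_{\e_k}/\e_k)D_{ij}u^{\e_k}(x_{\e_k})=f(x_{\e_k}).
\]
The identity $D_{ij}\varphi^\e(x)=D_{ij}\varphi(x)+D_{y_iy_j}w(x/\e;M)$ together with the cell equation \eqref{eq:key-l} then rewrites the left-hand side as $a_{ij}(x_{\e_k}/\e_k)\bigl[D_{ij}\varphi(x_{\e_k})-M_{ij}\bigr]+\ov a_{ij}M_{ij}$, whose limit is $\ov L\varphi(x_0)$ by continuity of $D^2\varphi$. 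A symmetric touching-from-below argument produces the opposite inequality, and I would reduce to the strict-touching case in the standard way by replacing $\varphi$ with $\varphi\pm\delta|x-x_0|^2$ and sending $\delta\ra 0$ afterwards.

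Having identified $u$ as a viscosity solution of \eqref{eq:l-h}, I would appeal to classical Schauder theory: by Lemma \ref{lem:eff op-l} the operator $\ov L$ has constant coefficients and is uniformly elliptic with constants $\lambda,\Lambda$, and since $f\in C^{m,\alpha}(\ov\Omega)$, $g\in C^{m+2,\alpha}(\ov\Omega)$, $\p\Omega\in C^{m+2,\alpha}$, the Dirichlet problem \eqref{eq:l-h} admits a unique classical solution $\tilde u\in C^{m+2,\alpha}(\ov\Omega)$ (as in Proposition \ref{prop:l-apndx}). The comparison principle for $\ov L$ applied to viscosity sub/supersolutions forces $u=\tilde u$, which delivers simultaneously the claimed regularity and the uniqueness of the limit. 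The uniqueness then upgrades the subsequential convergence to $u^\e\ra u$ for the full family by the standard sub-subsequence argument.

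The only delicate point is ensuring the infimum of $\varphi^\e-u^\e$ is attained at an interior point $x_{\e_k}\ra x_0$ rather than escaping to the boundary of the localization ball; this is guaranteed by the strict touching together with the bound $\e^2\|w(\cdot/\e;M)\|_{L^\infty(\R^n)}\ra 0$, which is furnished by Lemma \ref{lem:key-l}. Every other ingredient is a routine consequence of Lemmas \ref{lem:key-l} and \ref{lem:eff op-l}.
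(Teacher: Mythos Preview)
Your proposal is correct and follows essentially the same approach as the paper: both use Evans' perturbed test function $\varphi^\e(x)=\varphi(x)+\e^2w(x/\e;M)$ built from the corrector of Lemma~\ref{lem:key-l} to identify any subsequential limit as a viscosity solution of \eqref{eq:l-h}, and then invoke classical Schauder theory for the constant-coefficient operator $\ov L$ to obtain $C^{m+2,\alpha}$ regularity, uniqueness, and full-sequence convergence. The only cosmetic difference is that the paper tests with paraboloids $P$ (so $D^2P$ is constant and the cell equation makes $a_{ij}(x/\e)D_{ij}P^\e=\ov a_{ij}D_{ij}P$ exact) and argues by contradiction via the strong maximum principle, whereas you allow a general $C^2$ test function, freeze $M=D^2\varphi(x_0)$, and absorb the error $a_{ij}(x_{\e_k}/\e_k)[D_{ij}\varphi(x_{\e_k})-M_{ij}]$ by continuity of $D^2\varphi$; both variants are standard.
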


\begin{proof} We already proved part of the first assertion in Lemma \ref{lem:limit-l}. Since $u^\e\ra u$ uniformly in $\ov{\Omega}$ up to a subsequence and $u^\e=g$ on $\p\Omega$ for all $\e>0$, we have $u=g$ on $\p\Omega$. On the other hand, the maximum principle implies that \eqref{eq:l-h} has at most one solution. Therefore, the convergence of $u^\e\ra u$ is valid without extracting a subsequence.

We claim that $u$ is a viscosity solution to \eqref{eq:l-h}. If it is true, then Theorem \ref{thm:reg-nl-apndx} (e) and (f) imply that $u\in C^{m+2,\alpha}(\ov{\Omega})$. 

Thus, we are only left with proving the above claim. Let $P$ be a paraboloid which touches $u$ by above at $x_0$ in a neighborhood. By replacing $P$ by $P+\eta|x-x_0|^2$ ($\eta>0$) we may assume that $P$ touches $u$ strictly by above. Assume to the contrary that $\ov{a}_{ij}D_{ij}P-f(x_0)<0$. By the continuity of $f$, we can choose $r>0$ in such a way that $B_r(x_0)\subset\Omega$ and $\ov{a}_{ij}D_{ij}P-f(x)<0$ for any $x\in B_r(x_0)$. 

Define $P^\e(x):=P(x)+\e^2w(\e^{-1}x;D^2P)$. Note that $P^\e\in C^{2,\alpha}(\ov{\Omega})$. In view of \eqref{eq:key-l} we obtain
\begin{equation*}
a_{ij}\left(\frac{x}{\e}\right)D_{ij}P^\e(x)-f(x)=\ov{a}_{ij}D_{ij}P-f(x)<0\quad\text{in }B_r(x_0).
\end{equation*}
Hence, $P^\e$ is a supersolution of \eqref{eq:l} so that the strong maximum principle implies $(u^\e-P^\e)(x_0)<\max_{\p B_r(x_0)}(u^\e-P^\e)$. Letting $\e\ra 0$ then gives $\max_{\p B_r(x_0)}(u-P)\geq 0$, which violates the assumption that $P$ touches $u$ strictly by above at $x_0$. Therefore, $\ov{a}_{ij}D_{ij}P-f(x)\geq 0$ for any $x\in\Omega$. It shows that $u$ is a viscosity subsolution of \eqref{eq:l-h}. 

In a similar manner, we are able to prove that $u$ is a viscosity supersolution of \eqref{eq:l-h}. This completes the proof. 
\end{proof}

\subsection{Interior and boundary layer correctors}\label{sec:int cor-l}

In this subsection, we seek the interior and boundary layer correctors. We make a remark from the previous section before we begin. Recall from the linear algebra, $\{E^{ij}|i,j=1,\ldots,n\}$ is the standard basis of $\mc{S}^n$. Any matrix $M\in\mc{S}^n$ can be written as $M=M_{ij}E^{ij}$ where $M=(M_{ij})$. Set $M=E^{kl}$ in Lemma \ref{lem:key-l} for $k,l\in\{1,\ldots,n\}$ and write $\chi^{kl}:=w(\cdot;E^{kl})\in C^{2,\alpha}(\mbb{R}^n)$. Notice that $\chi^{kl}(0)=0$. In view of \eqref{eq:key-l} and Lemma \ref{lem:eff op-l} (i), $\chi^{kl}$ solves 
\begin{equation}\label{eq:chi-l-2}
a_{ij}D_{ij}\chi^{kl}+a_{kl}=\ov{a}_{kl}.
\end{equation}
Multiplying \eqref{eq:chi-l-2} with $M_{kl}$ and summing over the indices $k,l=1,\ldots,n$, we see that $\chi^{kl}M_{kl}$ solves \eqref{eq:key-l} with $M=(M_{kl})$. Define
\begin{equation*}
w_2(y,x)=\chi^{kl}(y)D_{x_kx_l}u(x)+\psi_2(x)\quad(y\in\mbb{R}^n,x\in\ov{\Omega}),
\end{equation*}
where $u$ is given by Lemma \ref{lem:main-l-h} and $\psi_2$ is chosen arbitrarily from $C^{m,\alpha}(\ov{\Omega})$ for the moment. By Lemma \ref{lem:main-l-h}, $w_2(\cdot,x)\in C^{2,\alpha}(\mbb{R}^n)$ for each $x\in\ov{\Omega}$ while $w_2(y,\cdot)\in C^{m,\alpha}(\ov{\Omega})$ for each $y\in\mbb{R}^n$. Moreover, $w_2(\cdot,x)$ solves 
\begin{equation*}
a_{ij}D_{y_iy_j}w_2(\cdot,x)+a_{ij}D_{x_ix_j}u(x)=0\quad\text{in }\mbb{R}^n
\end{equation*}
for each $x\in\ov{\Omega}$. We call $w_2$ the second order (interior) corrector of \eqref{eq:l}. The first order corrector will be defined afterward as a compatibility condition of the third order corrector. 

Interior correctors of higher orders are discovered in the similar direction.

\begin{lem}\label{lem:chi-l} There are a family $\{\ov{a}_{i_1\ldots i_k}|1\leq i_1,\ldots,i_k\leq n,k\geq 2\}$ of constants and a family $\{\chi^{i_1\ldots i_k}|1\leq i_1,\ldots,i_k\leq n,k\geq 2\}$ of 1-periodic functions in $C^{2,\alpha}(\mbb{R}^n)$ which satisfy the following recursive equation
\begin{equation}\label{eq:chi-l}
a_{ij}D_{ij}\chi^{i_1\ldots i_k}+2a_{i_kj}D_{y_j}\chi^{i_1\ldots i_{k-1}}+a_{i_{k-1}i_k}\chi^{i_1\ldots i_{k-2}}=\ov{a}_{i_1\ldots i_k}\quad\text{in }\mbb{R}^n
\end{equation}
for each $1\leq i_1,\ldots,i_k\leq n$. Here we understand $\chi\equiv 1$ and $\chi^{i}\equiv 0$ for each $i=1,\ldots,n$. Furthermore, for each $k\geq 2$, $\chi^{i_1\ldots i_k}(0)=0$ and 
\begin{equation}\label{eq:chi-l-C2a-ova/chi}
|\ov{a}_{i_1\ldots i_k}|+\norm{\chi^{i_1\ldots i_k}}_{C^{2,\alpha}(\mbb{R}^n)}\leq C_k,\quad\forall 1\leq i_1,\ldots,i_k\leq n.
\end{equation}
\end{lem}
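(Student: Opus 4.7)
The plan is to induct on $k \geq 2$, reducing each step to a single cell problem. The base case $k = 2$ is covered by Lemma \ref{lem:key-l}: taking $M = E^{i_1 i_2}$ one sets $\ov{a}_{i_1 i_2} := \gamma(E^{i_1 i_2})$ and $\chi^{i_1 i_2}(y) := w(y; E^{i_1 i_2})$, which is 1-periodic, lies in $C^{2,\alpha}(\mbb{R}^n)$, satisfies $\chi^{i_1 i_2}(0) = 0$ and \eqref{eq:chi-l-2} (coinciding with \eqref{eq:chi-l} for $k=2$ under $\chi \equiv 1$, $\chi^i \equiv 0$), and obeys the bound \eqref{eq:chi-l-C2a-ova/chi} by Lemmas \ref{lem:key-l-d} and \ref{lem:key-l-osc}.

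For the inductive step, suppose $\chi^{i_1 \ldots i_j}$ and $\ov{a}_{i_1 \ldots i_j}$ are in hand for every $j \leq k - 1$ with the desired properties, and define
\begin{equation*}
h(y) := 2 a_{i_k j}(y) D_{y_j} \chi^{i_1 \ldots i_{k-1}}(y) + a_{i_{k-1} i_k}(y) \chi^{i_1 \ldots i_{k-2}}(y).
\end{equation*}
By (L3) and the inductive hypothesis, $h$ is 1-periodic, lies in $C^{1,\alpha}(\mbb{R}^n)$, and has $\norm{h}_{C^\alpha(\mbb{R}^n)} \leq C(\sigma, C_{k-1}, C_{k-2})$. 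The task becomes producing $\ov{a}_{i_1 \ldots i_k} \in \mbb{R}$ and a 1-periodic $\chi^{i_1 \ldots i_k} \in C^{2,\alpha}(\mbb{R}^n)$ with $\chi^{i_1 \ldots i_k}(0) = 0$ solving $a_{ij} D_{y_i y_j} \chi^{i_1 \ldots i_k} = \ov{a}_{i_1 \ldots i_k} - h$, which is exactly a rearrangement of \eqref{eq:chi-l}. To this end I would first record the natural generalization of Lemma \ref{lem:key-l}: for any 1-periodic $h \in C^\alpha(\mbb{R}^n)$ there exists a unique $\gamma \in \mbb{R}$ (and a 1-periodic $C^{2,\alpha}$ solution, unique up to an additive constant) of $a_{ij} D_{y_i y_j} \chi = \gamma - h$. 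The proof would be an essentially verbatim replay of Lemmas \ref{lem:key-l-d}, \ref{lem:key-l-osc}, and \ref{lem:key-l}: Perron's method constructs a unique bounded 1-periodic viscosity solution $w^\delta$ of the penalized equation $a_{ij} D_{y_i y_j} w^\delta - \delta w^\delta = -h$; comparison with constant sub/supersolutions yields $\norm{\delta w^\delta}_{L^\infty(\mbb{R}^n)} \leq \norm{h}_{L^\infty(\mbb{R}^n)}$; applying Harnack to $w^\delta - \min_{\mbb{R}^n} w^\delta$ on balls of radius $\sqrt{n}$ gives a uniform bound on $\osc_{\mbb{R}^n} w^\delta$; interior Schauder yields uniform $C^{2,\alpha}$ control on $\tilde{w}^\delta := w^\delta - w^\delta(0)$; and passing $\delta \to 0$ together with the touching/comparison argument for uniqueness of $\gamma$ and Liouville for uniqueness up to constants delivers the claim. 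Applying this extension to the $h$ above produces $\ov{a}_{i_1 \ldots i_k}$ and $\chi^{i_1 \ldots i_k}$, and \eqref{eq:chi-l-C2a-ova/chi} for $k$ follows by combining $|\ov{a}_{i_1 \ldots i_k}| \leq \norm{h}_{L^\infty(\mbb{R}^n)}$ with the Schauder-type bound on $\chi^{i_1 \ldots i_k}$ in terms of $\norm{h}_{C^\alpha(\mbb{R}^n)}$ and the normalization $\chi^{i_1 \ldots i_k}(0) = 0$.

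The only non-routine step is the extension of Lemma \ref{lem:key-l} to general periodic sources. Inspection of the proofs of Lemmas \ref{lem:key-l-d}, \ref{lem:key-l-osc}, and \ref{lem:key-l} shows that the specific form $a_{ij} M_{ij}$ was used only through the bound \eqref{eq:key-l-d-Ca-AM} on $\norm{a_{ij} M_{ij}}_{L^\infty}$; replacing it by the $L^\infty$ and $C^\alpha$ norms of $h$ leaves the rest of the argument unaffected. Beyond this, the proof is a straightforward induction with careful bookkeeping of the constants $C_k$, which grow with $k$ (and pick up Schauder and Harnack constants) but remain finite at each fixed step.
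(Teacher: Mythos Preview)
Your proposal is correct and follows essentially the same approach as the paper: induction on $k$, reduction of the inductive step to a cell problem with a general $1$-periodic $C^\alpha$ source, and then a verbatim replay of the penalized-problem argument (Lemmas \ref{lem:key-l-d}, \ref{lem:key-l-osc}, \ref{lem:key-l}) to extract the unique constant and the normalized corrector. The paper phrases this as ``this equation belongs to the same class of \eqref{eq:key-l-d}'' and invokes those lemmas directly, which is exactly what you outline.
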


\begin{proof} We already know $\{\ov{a}_{ij}\}_{i,j=1,\ldots,n}$ and $\{\chi^{ij}\}_{i,j=1,\ldots,n}$ from the comment above this lemma; one may notice that \eqref{eq:chi-l} is exactly the same with \eqref{eq:chi-l-2} if $k=2$. The constant $C_2$ can be taken by the sum of those from \eqref{eq:key-l-d-C2a-dwd} and \eqref{eq:key-l-d-C2a-twd}.

The construction of the families $\{\ov{a}_{i_1\ldots i_k}\}$ and $\{\chi^{i_1\ldots i_k}\}$ (for $k\geq 3$) can be done by an induction argument, mainly following the lines of the proofs of Lemma \ref{lem:key-l}, \ref{lem:key-l-d} and \ref{lem:key-l-osc}. To avoid the redundancy, we leave it to the readers.
\end{proof}

Now let $m\geq 3$. By Lemma \ref{lem:main-l-h} we have $u\in C^{m+2,\alpha}(\ov{\Omega})$. For $1\leq k\leq m-2$, define $\psi_k\in C^{m-k+2,\alpha}(\ov{\Omega})$ recursively by the unique solution of 
\begin{equation}\label{eq:psi-l}
\begin{cases}
\ov{a}_{ij}D_{x_ix_j}\psi_k=-\sum_{l=3}^{k+2}\ov{a}_{i_1\ldots i_l}D_{x_{i_1}\ldots x_{i_l}}\psi_{k-l+2}&\text{in }\Omega,\\
\psi_k=0&\text{on }\p\Omega,
\end{cases}
\end{equation}
where we understand $\psi_0\equiv u$. This can be done by an induction argument. Fix $k$ and suppose that $\psi_l\in C^{m-l+2,\alpha}(\ov{\Omega})$ for all $0\leq l<k$. Then the right hand side of \eqref{eq:psi-l} belongs to $C^{m-k,\alpha}(\ov{\Omega})$. Now the existence and regularity theories ensure that the boundary value problem \eqref{eq:psi-l} attains a unique solution $\psi_k\in C^{m-k+2,\alpha}(\ov{\Omega})$. This induction holds because the induction hypothesis is met for $k=1$. 

Furthermore, we have the following.

\begin{lem}\label{lem:psi-l-est} Let $m\geq 3$ and set $\psi_k$ as above for $1\leq k\leq m-2$. Then 
\begin{equation}\label{eq:psi-l-est}
\norm{\psi_k}_{C^{m-k+2,\alpha}(\ov{\Omega})}\leq \tilde{C}_{k,m,\Omega}\left(\norm{f}_{C^{m,\alpha}(\ov{\Omega})}+\norm{g}_{C^{m+2,\alpha}(\ov{\Omega})}\right),
\end{equation}
for each $k=0,1,\ldots,m-2$, where we understand $\psi_0=u$. 
\end{lem}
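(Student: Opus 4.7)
The natural strategy is induction on $k$, directly following the recursive definition \eqref{eq:psi-l}. For the base case $k=0$, one has $\psi_0 = u$ satisfying the effective equation \eqref{eq:l-h}. Because $\ov{L} = \ov{a}_{ij}D_{ij}$ has constant coefficients and is uniformly elliptic by Lemma \ref{lem:eff op-l}, the standard Schauder existence and regularity theory (Proposition \ref{prop:l-apndx} (e), (f)) applied with index $m$ immediately yields $u \in C^{m+2,\alpha}(\ov{\Omega})$ together with the desired estimate, supplying $\tilde{C}_{0,m,\Omega}$.

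For the inductive step, fix $1 \le k \le m-2$ and assume the estimate is already established for $\psi_0, \psi_1, \ldots, \psi_{k-1}$. Then $\psi_k$ solves the constant-coefficient Dirichlet problem
\begin{equation*}
\ov{a}_{ij}D_{ij}\psi_k = h_k \quad\text{in }\Omega, \qquad \psi_k = 0 \quad\text{on }\p\Omega,
\end{equation*}
where $h_k := -\sum_{l=3}^{k+2}\ov{a}_{i_1 \ldots i_l}\, D_{x_{i_1}\ldots x_{i_l}}\psi_{k-l+2}$. For each summand, the induction hypothesis gives $\psi_{k-l+2}\in C^{m-(k-l+2)+2,\alpha}(\ov{\Omega}) = C^{m-k+l,\alpha}(\ov{\Omega})$, so taking $l$ derivatives produces a $C^{m-k,\alpha}(\ov{\Omega})$ function whose norm is dominated by $\norm{\psi_{k-l+2}}_{C^{m-k+l,\alpha}(\ov{\Omega})}$. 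Bounding each constant $|\ov{a}_{i_1 \ldots i_l}|$ by $C_l$ via Lemma \ref{lem:chi-l} and summing, one obtains $h_k \in C^{m-k,\alpha}(\ov{\Omega})$ with
\begin{equation*}
\norm{h_k}_{C^{m-k,\alpha}(\ov{\Omega})} \leq C_{k,m}\bigl(\norm{f}_{C^{m,\alpha}(\ov{\Omega})}+\norm{g}_{C^{m+2,\alpha}(\ov{\Omega})}\bigr).
\end{equation*}

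A final application of Schauder regularity (Proposition \ref{prop:l-apndx} (e), (f)) to the constant-coefficient Dirichlet problem for $\psi_k$, now with a $C^{m-k,\alpha}$ right-hand side, zero boundary data, and $\p\Omega \in C^{m-k+2,\alpha}$, then yields $\psi_k\in C^{m-k+2,\alpha}(\ov{\Omega})$ together with the required bound, closing the induction. The whole argument reduces to careful bookkeeping; the essential observation is that the index pattern $(m-k+l)-l = m-k$ is preserved for every admissible $l$, so the regularity lost by differentiating $\psi_{k-l+2}$ exactly matches what the Schauder estimate for $\psi_k$ requires, uniformly in $l$. There is no genuine analytic difficulty beyond tracking how the constants accumulate at each step.
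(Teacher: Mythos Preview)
Your proposal is correct and follows essentially the same inductive route as the paper's proof: establish the base case $\psi_0=u$ via Schauder theory for \eqref{eq:l-h}, then propagate the bound through the recursion \eqref{eq:psi-l}. The only small point you glossed over is that the Schauder estimate in Proposition \ref{prop:l-apndx} (b)/(e) bounds $\norm{\psi_k}_{C^{m-k+2,\alpha}}$ in terms of $\norm{\psi_k}_{L^\infty}$ plus data, so one still needs the a priori estimate (Proposition \ref{prop:l-apndx} (a)) to absorb that $L^\infty$ term---the paper makes this explicit for $k=0$, and it is equally routine for $k\geq 1$ since the boundary data vanishes.
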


\begin{proof} Since $u\in C^{m+2,\alpha}(\ov{\Omega})$ solves \eqref{eq:l-h} and since $f\in C^{m,\alpha}(\ov{\Omega})$, $g\in C^{m+2,\alpha}(\ov{\Omega})$ and $\p\Omega\in C^{m+2,\alpha}$, Theorem \ref{thm:reg-nl-apndx} (f) and an a priori estimate yield that
\begin{equation*}
\norm{u}_{C^{m+2,\alpha}(\ov{\Omega})}\leq C_{m,\Omega}\left(\norm{f}_{C^{m,\alpha}(\ov{\Omega})}+\norm{g}_{C^{m+2,\alpha}(\ov{\Omega})}\right).
\end{equation*}
The proof is finished by adopting an induction argument. One can also prove that 
\begin{equation*}
\tilde{C}_{k,m,\Omega}\leq C_{m-k+2,\Omega}\sum_{l=3}^{k+2}C_l\tilde{C}_{k-l+2,m,\Omega}.
\end{equation*}
\end{proof}

Set for each $1\leq k\leq m$
\begin{equation}\label{eq:int cor-l-form}
w_k(y,x)=\sum_{l=1}^{k}\chi^{i_1\ldots i_l}(y)D_{x_{i_1}\ldots x_{i_l}}\psi_{k-l}(x)+\psi_k(x)\quad(y\in\mbb{R}^n,x\in\ov{\Omega}),
\end{equation}
where $\psi_{m-1}\in C^{3,\alpha}(\ov{\Omega})$ and $\psi_m\in C^{2,\alpha}(\ov{\Omega})$ are arbitrary functions which satisfy the inequality \eqref{eq:psi-l-est} respectively when $k=m-1$ and $m$. Recall that we have set $\chi^i\equiv 0$ for all $i=1,\ldots,n$, which implies that $w_1(y,x)=\psi_1(x)$; that is, $w_1$ is independent of the $y$-variable. 
\begin{lem}\label{lem:int cor-l} Let $m\geq 3$ be an integer and $w_k$ be given by \eqref{eq:int cor-l-form} for each $k=1,\ldots,m$. Then $w_k(\cdot,x)\in C^{2,\alpha}(\mbb{R}^n)$ for each $x\in\ov{\Omega}$ and $w_k(y,\cdot)\in C^{m-k+2,\alpha}(\ov{\Omega})$ for each $y\in\mbb{R}^n$ with the estimate 
\begin{equation*}
\norm{w_k(\cdot,x)}_{C^{2,\alpha}(\mbb{R}^n)}+\norm{w_k(y,\cdot)}_{C^{m-k+2,\alpha}(\ov{\Omega})}\leq\bar{C}_{k,m,\Omega}\left(\norm{f}_{C^{m,\alpha}(\ov{\Omega})}+\norm{g}_{C^{m+2,\alpha}(\ov{\Omega})}\right),
\end{equation*}
where $\bar{C}_{k,m,\Omega}=\sum_{l=1}^kn^lC_l\tilde{C}_{k-l,m,\Omega}+\tilde{C}_{k,m,\Omega}$ for each $k=1,\ldots,m$. 

Moreover, for $3\leq k\leq m$, $w_k$ solves recursively
\begin{equation}\label{eq:int cor-l}
a_{ij}D_{y_iy_j}w_k+2a_{ij}D_{x_iy_j}w_{k-1}+a_{ij}D_{x_ix_j}w_{k-2}=0\quad\text{in }\mbb{R}^n\times\Omega.
\end{equation}
\end{lem}

\begin{proof} The estimate follows from \eqref{eq:chi-l-C2a-ova/chi} and \eqref{eq:psi-l-est}. The equation \eqref{eq:int cor-l} is immediate from \eqref{eq:chi-l} and \eqref{eq:psi-l}. 
\end{proof}

Define now the $k$-th order interior corrector $w_k^\e$ of \eqref{eq:l} for each $1\leq k\leq m$ and $\e>0$ by
\begin{equation}\label{eq:int cor-l-form-e}
w_k^\e(x):=w_k\left(\frac{x}{\e},x\right)\quad(x\in\ov{\Omega}).
\end{equation}
By Lemma \ref{lem:int cor-l}, $w_k^\e\in C^{2,\alpha}(\ov{\Omega})$ for each $\e>0$. Thus, the following boundary value problem has a unique solution lying in $C^{2,\alpha}(\ov{\Omega})$; 
\begin{equation}\label{eq:bdry cor-l}
\begin{cases}
a_{ij}\left(\frac{x}{\e}\right)D_{ij}z_k^\e=0&\text{in }\Omega,\\
z_k^\e=-w_k^\e&\text{on }\p\Omega.
\end{cases}
\end{equation}
We denote the solution by $z_k^\e$ and call it the $k$-th order boundary layer corrector of \eqref{eq:l}. Lemma \ref{lem:int cor-l} yields a uniform bound of $z_k^\e$, namely,
\begin{equation*}
\sup_{\e>0}\norm{z_k^\e}_{L^\infty(\Omega)}\leq c_0\sup_{\e>0}\norm{w_k^\e}_{L^\infty(\Omega)}\leq c_0\bar{C}_{k,m,\Omega}\left(\norm{f}_{C^{m,\alpha}(\ov{\Omega})}+\norm{g}_{C^{m+2,\alpha}(\ov{\Omega})}\right).
\end{equation*}
Note that for any $\e>0$, $z_1^\e\equiv 0$ on $\ov{\Omega}$, since $w_1^\e\equiv\psi_1$ on $\ov{\Omega}$ where $\psi_1$ vanishes on $\p\Omega$. 

\subsection{Proof of Main Theorem I}\label{sec:pf main-l}

We are now in position to prove Main Theorem I. 
\begin{proof}[Proof of Theorem \ref{thm:main-l}] Fix $\e>0$. Let $w_k^\e$ and $z_k^\e$ be defined as in the previous section for each $k=1,\ldots,m$. Define
\begin{equation*}
\eta_m^\e:=u+\e w_1^\e+\e^2w_2^\e+\cdots+\e^mw_m^\e,\quad\theta_m^\e:=\e z_1^\e+\e^2z_2^\e+\cdots+\e^mz_m^\e
\end{equation*}
on $\ov{\Omega}$. Then both $\eta_m^\e$ and $\theta_m^\e$ belong to $C^{2,\alpha}(\ov{\Omega})$. We utilize \eqref{eq:key-l}, \eqref{eq:int cor-l} and \eqref{eq:bdry cor-l}. A lengthy but elementary computation gives 
\begin{equation*}
\begin{split}
a_{ij}\left(\frac{x}{\e}\right)D_{ij}(\eta_m^\e+\theta_m^\e)=a_{ij}\left(\frac{x}{\e}\right)D_{ij}\eta_m^\e=f+\e^{m-1}\vp_m^\e
\end{split}
\end{equation*}
in $\Omega$, where
\begin{equation*}
\begin{split}
\vp_m^\e(x)&=\sum_{l=2}^{m-1}\left[2a_{i_lj}\left(\frac{x}{\e}\right)D_{y_j}\chi^{i_1\ldots i_{l-1}}\left(\frac{x}{\e}\right)+a_{i_{l-1}i_l}\left(\frac{x}{\e}\right)\chi^{i_1\ldots i_{l-2}}\left(\frac{x}{\e}\right)\right]\\
&\quad\quad\quad\quad\times D_{x_{i_1}\cdots x_{i_l}}\psi_{m-l-1}(x)\\
&\quad+\e\sum_{l=2}^ma_{i_{l-1}i_l}\left(\frac{x}{\e}\right)\chi^{i_1\ldots i_{l-2}}\left(\frac{x}{\e}\right)D_{x_{i_1}\ldots x_{i_l}}\psi_{m-l}(x)\qquad(x\in\Omega).
\end{split}
\end{equation*}
Now we set $\e\in(0,1)$. According to \eqref{eq:chi-l-C2a-ova/chi} and \eqref{eq:psi-l-est}, we have
\begin{equation*}
\norm{\vp_m^\e}_{L^\infty(\Omega)}\leq L_{m,\Omega}\left(\norm{f}_{C^{m,\alpha}(\ov{\Omega})}+\norm{g}_{C^{m+2,\alpha}(\ov{\Omega})}\right)
\end{equation*}
where 
\begin{equation*}
L_{m,\Omega}=\sigma\left[\sum_{l=3}^{m-1}n^{l-1}\left\{2(C_{l-1}+C_{l-2})\tilde{C}_{m-l-1,\Omega}+C_{l-2}\tilde{C}_{m-l,\Omega}\right\}+n^{m-1}C_{m-2}\tilde{C}_{0,\Omega}\right].
\end{equation*}
Here $C_k$ and $\tilde{C}_k$ are the constants chosen as in \eqref{eq:chi-l-C2a-ova/chi} and \eqref{eq:psi-l-est}. 

On the other hand, we have $\eta_m^\e+\theta_m^\e=g+\sum_{k=1}^m\e^k(w_k^\e+z_k^\e)=g$ on $\p\Omega$. Thus, $u^\e-\eta_m^\e-\theta_m^\e\in C^{2,\alpha}(\ov{\Omega})$ solves the following equation,
\begin{equation*}
\begin{cases}
a_{ij}\left(\frac{x}{\e}\right)D_{ij}v=-\e^{m-1}\vp_m^\e&\text{in }\Omega,\\
v=0&\text{on }\p\Omega.
\end{cases}
\end{equation*}
An a priori estimate then gives
\begin{equation*}
\norm{u^\e-\eta_m^\e-\theta_m^\e}_{L^\infty(\Omega)}\leq c_0L_{m,\Omega}\left(\norm{f}_{C^{m,\alpha}(\ov{\Omega})}+\norm{g}_{C^{m+2,\alpha}(\ov{\Omega})}\right).
\end{equation*}
\end{proof}

\section{Fully Nonlinear Equations in Non-divergence Form}\label{sec:nl}

\subsection{Basic homogenization scheme}\label{sec:review-nl}

This subsection is devoted to the homogenization process of \eqref{eq:nl} to \eqref{eq:nl-h}. It generalizes the homogenization result of linear equations (see Section \ref{sec:review-l}). One may find a general argument in \cite{E2} for some lemmas. However, we present all the proofs which are adequate for our situation. 

\begin{lem}\label{lem:limit-nl} Assume for each $\e>0$ that $u^\e\in C(\ov{\Omega})$ is a viscosity solution of \eqref{eq:nl}. Then there is a function $u\in C(\ov{\Omega})$ and a subsequence $\{u^{\e_k}\}_{k=1}^\infty$ of $\{u^\e\}_{\e>0}$ such that $u^{\e_k}\ra u$ uniformly in $\ov{\Omega}$ as $k\ra\infty$. 
\end{lem}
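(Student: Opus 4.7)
The plan is to mimic the argument given for Lemma \ref{lem:limit-l}, replacing the linear a priori and regularity results by their viscosity counterparts that are available once $F$ is uniformly elliptic with constants independent of $\e$. The two ingredients we aim for are a uniform $L^\infty$ bound on $\{u^\e\}_{\e>0}$ and a uniform modulus of continuity on $\ov{\Omega}$; Arzela--Ascoli then finishes the job.

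First I would reduce the problem to a standard Pucci-class inclusion. The assumption (F3) gives, in particular with $L=0$, that $\|F(0,\cdot,\cdot)\|_{L^\infty(\ov{\Omega}\times\R^n)}\le\tau_0\le\sigma$, so by uniform ellipticity (F2) each $u^\e$ lies in the Pucci class $S(\lambda,\Lambda,\sigma)$ in $\Omega$, with constants that do not depend on $\e$. Combined with the ABP-type a priori estimate (the nonlinear analogue cited as part of Theorem \ref{thm:reg-nl-apndx}/Proposition \ref{prop:l-apndx} (a)), this yields
\begin{equation*}
\sup_{\e>0}\norm{u^\e}_{L^\infty(\Omega)}\le C\bigl(\sigma+\norm{g}_{L^\infty(\p\Omega)}\bigr),
\end{equation*}
where $C=C(n,\lambda,\Lambda,\diam(\Omega))$.

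Next I would produce an $\e$-independent modulus of continuity. Since $g\in C^{m+2,1}(\ov{\Omega})\subset C^{0,1}(\ov{\Omega})$, $g$ has a Lipschitz modulus $\rho(r)=[g]_{C^{0,1}(\ov{\Omega})}r$, and since $\p\Omega\in C^{m+2,1}$, $\Omega$ satisfies a uniform exterior sphere condition with some radius $R>0$. The global Hölder/continuity up-to-the-boundary estimate for viscosity solutions in the Pucci class (Theorem \ref{thm:reg-nl-apndx} (d)) then produces a modulus of continuity $\rho^*$ for $u^\e$ on $\ov{\Omega}$ that depends only on $n,\lambda,\Lambda,\sigma,\norm{g}_{L^\infty(\p\Omega)},\diam(\Omega),R$ and $\rho$, all of which are independent of $\e$. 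Hence $\{u^\e\}_{\e>0}$ is equicontinuous on $\ov{\Omega}$.

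With uniform boundedness and equicontinuity in hand, Arzela--Ascoli yields a sequence $\e_k\downarrow 0$ and some $u\in C(\ov{\Omega})$ with $u^{\e_k}\to u$ uniformly on $\ov{\Omega}$. I do not expect any genuinely hard step here; the only subtlety compared to the linear proof is that the right-hand side of the $\e$-problem is effectively $-F(0,x,x/\e)$, which one must bound using (F3), and that the regularity input comes from the Pucci-class theory of \cite{CC} rather than from Schauder. (As in the linear case, uniqueness of the limit $u$ will only emerge later, once the effective equation is shown to have a comparison principle; at this stage $u$ is only the limit of one subsequence.)
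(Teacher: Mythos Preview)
Your proposal is correct and follows essentially the same approach as the paper: the paper's own proof simply says the argument is identical to that of Lemma \ref{lem:limit-l} and does not use the linear structure, and you have faithfully spelled out what that means in the nonlinear setting (Pucci-class inclusion via (F2)--(F3), uniform $L^\infty$ bound, uniform modulus of continuity from Theorem \ref{thm:reg-nl-apndx} (d), then Arzel\`a--Ascoli). If anything, your write-up is more detailed than the paper's one-line proof; the only cosmetic point is that the Pucci class one lands in is $S(\lambda/n,\Lambda,\cdot)$ rather than $S(\lambda,\Lambda,\cdot)$, but this is immaterial here.
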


\begin{proof} The proof is identical to that of Lemma \ref{lem:limit-l}. One may notice that the proof of Lemma \ref{lem:limit-l} does not involve the linear structure of \eqref{eq:l}.
\end{proof}

As we did in Section \ref{sec:review-l}, we will ascertain the effective equation which $u$ solves in the viscosity sense at the end of this section. Before we start, we point out that the argument throughout this subsection is valid by only assuming that $F\in C^{0,1}(\ov{B}_L\times\ov{\Omega}\times\mbb{R}^n)$ for each $L>0$ (i.e., (F3) with $m=0$). 

\begin{lem}\label{lem:key-nl} To each $(M,x)\in\mc{S}^n\times\ov{\Omega}$ there corresponds a unique $\gamma\in\R$ for which the following equation
\begin{equation}\label{eq:key-nl}
F(D_y^2w+M,x,y)=\gamma\quad\text{in }\mbb{R}^n
\end{equation}
attains a 1-periodic solution $w\in C^{2,\alpha}(\mbb{R}^n)$. Moreover, $w$ is unique up to an additive constant. Moreover, if the solution $w$ satisfies $w(0)=0$, then 
\begin{equation}\label{eq:key-nl-est-w}
\norm{w}_{C^{2,\alpha}(\mbb{R}^n)}\leq C_{\norm{M}}.
\end{equation}
\end{lem}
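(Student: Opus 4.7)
The plan is to mirror the viscosity argument used to prove Lemma~\ref{lem:key-l}, replacing the linear Schauder estimates with the nonlinear regularity available under the concavity hypothesis (F4). Treating $(M,x)$ as fixed parameters, I would first introduce the penalized cell problem
\begin{equation*}
F(D_y^2 w^\delta + M, x, y) - \delta w^\delta = 0 \quad \text{in } \mbb{R}^n, \qquad \delta\in(0,1).
\end{equation*}
Since $\norm{F(M,x,\cdot)}_{L^\infty(\mbb{R}^n)}\le C_{\norm{M}}$ by (F3), the constants $-\delta^{-1}C_{\norm{M}}$ and $+\delta^{-1}C_{\norm{M}}$ serve as a subsolution and a supersolution, so Perron's method within the class of $1$-periodic viscosity solutions (Theorem~\ref{thm:exist-nl-apndx}\,(b)) produces a unique bounded $1$-periodic solution $w^\delta$ with $\norm{\delta w^\delta}_{L^\infty(\mbb{R}^n)}\le C_{\norm{M}}$. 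The interior Evans--Krylov-type estimate for concave uniformly elliptic operators (Theorem~\ref{thm:reg-nl-apndx}\,(e)) then upgrades $w^\delta$ to $C^{2,\alpha}(\mbb{R}^n)$.

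Next I would establish the oscillation bound $\sup_{0<\delta<1}\osc_{\mbb{R}^n}w^\delta\le C_{\norm{M}}$ independently of $\delta$. Since the Hessian is unaffected by an additive constant, $\hat{w}^\delta:=w^\delta-\min_{\mbb{R}^n}w^\delta\ge 0$ and, thanks to the uniform ellipticity (F2), $\hat{w}^\delta$ lies in both Pucci classes $\ov{S}(\lambda,\Lambda,f_\delta)$ and $\underline{S}(\lambda,\Lambda,f_\delta)$ with $\norm{f_\delta}_{L^\infty}\le C_{\norm{M}}$. Applying the Harnack inequality over a ball $B_{\sqrt{n}}(y_0)$ containing a full period cube---the geometric trick of Lemma~\ref{lem:key-l-osc}---then yields $\osc_{\mbb{R}^n}w^\delta=\sup\hat{w}^\delta\le C_{\norm{M}}$. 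Setting $\tilde{w}^\delta:=w^\delta-w^\delta(0)$ and feeding the resulting uniform $L^\infty$ bound back into the Evans--Krylov estimate on unit balls produces the uniform control $\sup_{0<\delta<1}\norm{\tilde{w}^\delta}_{C^{2,\alpha}(\mbb{R}^n)}\le C_{\norm{M}}$.

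Compact embedding then extracts a subsequence $\delta_k\to 0$ along which $\delta_k w^{\delta_k}(0)\to\gamma$ for some $\gamma\in\mbb{R}$, $\delta_k w^{\delta_k}\to\gamma$ uniformly on $\mbb{R}^n$, and $\tilde{w}^{\delta_k}\to w$ in $C^2(\mbb{R}^n)$; stability of viscosity solutions produces a classical solution $w\in C^{2,\alpha}(\mbb{R}^n)$ of \eqref{eq:key-nl} with $w(0)=0$ and $\norm{w}_{C^{2,\alpha}(\mbb{R}^n)}\le C_{\norm{M}}$. For the uniqueness of $\gamma$, I would take any two classical solutions $(w,\gamma)$ and $(w',\gamma')$: the difference $v:=w-w'$ is $C^2$ and $1$-periodic, so it attains both a maximum at some $y_+$ and a minimum at some $y_-$; the uniform ellipticity of $F$ applied at $y_+$ (where $D^2 v\le 0$) and at $y_-$ (where $D^2 v\ge 0$) yields $\gamma\le\gamma'$ and $\gamma\ge\gamma'$, respectively, hence $\gamma=\gamma'$. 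Uniqueness of $w$ modulo an additive constant would then follow by linearization: the identity
\begin{equation*}
0 = F(D^2 w+M,x,y) - F(D^2 w'+M,x,y) = a_{ij}(y)\, D_{ij}(w-w'),
\end{equation*}
with $a_{ij}(y):=\int_0^1 (D_p F)_{ij}(sD^2 w+(1-s)D^2 w'+M,x,y)\,ds$ bounded and uniformly elliptic, combined with the strong maximum principle applied to the bounded $1$-periodic function $v$, forces $v$ to be constant; once this is known, the whole family $\tilde{w}^\delta$ converges to $w$ as $\delta\to 0$ without extracting a subsequence.

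The main obstacle I anticipate is the oscillation step: unlike in Lemma~\ref{lem:key-l-osc}, the nonlinear equation is not invariant under additive constants, so the Harnack inequality must be deployed through the Pucci extremal inequalities derived from uniform ellipticity rather than from the equation itself, and the subsequent extraction of a $C^{2,\alpha}$-compact subsequence depends crucially on the Evans--Krylov estimate, i.e., on the concavity hypothesis (F4). Without (F4), only interior $C^{1,\alpha}$ regularity would be available, which would be insufficient to identify the limit as a classical $C^{2,\alpha}$ solution of the cell problem.
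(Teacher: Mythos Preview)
Your proposal is correct and follows essentially the same route as the paper: penalized cell problem via Perron's method, a $\delta$-uniform oscillation bound obtained by applying the Harnack inequality through the Pucci classes on a ball containing a full period cube, Evans--Krylov to upgrade to uniform $C^{2,\alpha}$ bounds on $\tilde w^\delta$, and then compact embedding plus stability to pass to the limit. The only tactical differences lie in the uniqueness arguments: for $\gamma$ the paper uses a sliding/touching argument while you evaluate directly at the extrema of $w-w'$, which is equivalent; for uniqueness of $w$ modulo constants the paper simply observes that $w-w'\in S(\lambda/n,\Lambda,0)$ and invokes the nonlinear Liouville theorem (Theorem~\ref{thm:reg-nl-apndx}\,(c)), whereas you linearize via the integral formula for $a_{ij}$ and apply the strong maximum principle---both work, the paper's version being marginally cleaner since it needs only uniform ellipticity rather than differentiability of $F$ in $M$.
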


As we did in the linear case, we start with an approximating problem.
 
\begin{lem}\label{lem:key-nl-d}
Let $(M,x)\in\mc{S}^n\times\ov{\Omega}$ and $\delta\in(0,1)$. Then there is a unique bounded 1-periodic function $w^\delta\in C^{2,\alpha}(\mbb{R}^n)$ which solves
\begin{equation}\label{eq:key-nl-d}
F(D_y^2w^\delta+M,x,y)-\delta w^\delta=0\quad\text{in }\mbb{R}^n,
\end{equation}
with the uniform estimate
\begin{equation}\label{eq:key-nl-d-C2a-dwd}
\sup_{0<\delta<1}\norm{\delta w^\delta}_{C^{2,\alpha}(\mbb{R}^n)}\leq C_{\norm{M}}.
\end{equation}
\end{lem}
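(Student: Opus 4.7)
The plan is to transcribe the argument of Lemma \ref{lem:key-l-d} to the fully nonlinear setting, with Evans--Krylov-type interior estimates playing the role of linear Schauder theory. Fix $(M,x)\in\mc{S}^n\times\ov{\Omega}$ and $\delta\in(0,1)$, and define the auxiliary operator $\tilde{F}(N,r,y):=F(N+M,x,y)-\delta r$. By (F2) this is uniformly elliptic in $N$, it is strictly proper in $r$, and by (F3) it is Lipschitz in $(N,y)$, so Theorem \ref{thm:exist-nl-apndx} (a) supplies a comparison principle for \eqref{eq:key-nl-d}.

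First I would construct constant barriers. By (F3) with $L=\norm{M}$, $|F(M,x,y)|\leq\tau_{\norm{M}}$ uniformly in $y\in\R^n$. Hence the constants $w_\pm^\delta:=\pm\tau_{\norm{M}}/\delta$ are respectively a supersolution and subsolution of \eqref{eq:key-nl-d}. Perron's method (Theorem \ref{thm:exist-nl-apndx} (b)) then produces a bounded viscosity solution $w^\delta\in C(\R^n)$ with
\[
\norm{\delta w^\delta}_{L^\infty(\R^n)}\leq\tau_{\norm{M}},
\]
and comparison yields uniqueness. For 1-periodicity, observe that by (F1) the translate $y\mapsto w^\delta(y+k)$ is again a bounded solution for every $k\in\Z^n$, so uniqueness forces it to coincide with $w^\delta$.

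Next I would upgrade regularity. Freezing $(M,x)$, the function $w^\delta$ solves $G(D^2w^\delta,y)=\delta w^\delta(y)$ in $\R^n$, where $G(N,y):=F(N+M,x,y)$. By (F2) and (F4), $G$ is uniformly elliptic with constants $\lambda,\Lambda$ and concave in $N$; by (F3), $G\in C^{m,1}$ with $|G(0,y)|=|F(M,x,y)|\leq\tau_{\norm{M}}$. Applying the $C^{2,\alpha}$ interior estimate for concave uniformly elliptic equations (see Theorem \ref{thm:reg-nl-apndx}) on $B_1(y_0)\subset B_2(y_0)$, we obtain
\[
\norm{w^\delta}_{C^{2,\alpha}(B_1(y_0))}\leq c_0\bigl(\norm{w^\delta}_{L^\infty(B_2(y_0))}+\norm{\delta w^\delta}_{C^\alpha(B_2(y_0))}+\norm{G(0,\cdot)}_{C^\alpha(B_2(y_0))}\bigr),
\]
with $c_0$ depending on $n,\lambda,\Lambda$ and $\norm{M}$ through the $C^{0,1}$-bound on $G$. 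Combining with $\norm{w^\delta}_{L^\infty}\leq\tau_{\norm{M}}/\delta$ and $|G(0,\cdot)|\leq\tau_{\norm{M}}$ gives $\norm{w^\delta}_{C^{2,\alpha}(B_1(y_0))}\leq C_{\norm{M}}/\delta$. Since $y_0$ is arbitrary, multiplying by $\delta$ yields \eqref{eq:key-nl-d-C2a-dwd}. A bootstrap of this bound into the equation actually promotes $w^\delta$ to a classical $C^{2,\alpha}$ solution, which justifies its use in the sequel.

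The only nontrivial input is the regularity step: unlike in Lemma \ref{lem:key-l-d}, linear Schauder theory is not available, and the uniform-in-$\delta$ $C^{2,\alpha}$ bound rests essentially on the concavity hypothesis (F4) via Evans--Krylov. Everything else---construction of barriers, Perron, periodicity via comparison, uniqueness---is a direct line-by-line adaptation of the linear argument, with $n\sigma\norm{M}$ simply replaced by the nonlinear bound $\tau_{\norm{M}}$ coming from (F3).
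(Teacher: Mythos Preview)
Your proof follows the same architecture as the paper's: comparison principle for the penalized operator, constant barriers $\pm\tau_{\norm{M}}/\delta$ (the paper uses $\pm\sigma(1+\norm{M})/\delta$, which is the same thing via (F3)), Perron's method for the periodic problem, and then Evans--Krylov for the $C^{2,\alpha}$ bound.

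There is one genuine gap in your regularity step. You write the interior $C^{2,\alpha}$ estimate with $\norm{\delta w^\delta}_{C^\alpha(B_2(y_0))}$ on the right-hand side, and then say ``combining with $\norm{w^\delta}_{L^\infty}\leq\tau_{\norm{M}}/\delta$ and $|G(0,\cdot)|\leq\tau_{\norm{M}}$'' yields the conclusion. But you never bound $\norm{\delta w^\delta}_{C^\alpha}$; an $L^\infty$ bound on $\delta w^\delta$ is not enough, and Theorem \ref{thm:reg-nl-apndx} (e) genuinely requires the right-hand side $f=\delta w^\delta$ to be H\"older. At this stage $w^\delta$ is only a continuous viscosity solution, so this H\"older bound does not come for free. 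The paper inserts an intermediate Krylov--Safonov step: since $w^\delta\in S(\lambda/n,\Lambda,\delta w^\delta-F(M,x,\cdot))$, Theorem \ref{thm:reg-nl-apndx} (b) on a periodic cube gives
\[
\sup_{0<\delta<1}\norm{\delta w^\delta}_{C^{\tilde\alpha}(\mbb{R}^n)}\leq 3c_0\sigma(1+\norm{M}),
\]
and only then is Evans--Krylov applied with $f=\delta w^\delta\in C^{\tilde\alpha}$. You should add this step; once it is in place your argument is complete and matches the paper's.
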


\begin{proof} Fix $(M,x)\in\mc{S}^n\times\ov{\Omega}$. The unique existence of the solution $w^\delta$ to \eqref{eq:key-nl-d} follows the same argument as in Lemma \ref{lem:key-l-d}, so is omitted. Moreover, we have 
\begin{equation}\label{eq:key-nl-d-linf-dwd}
\sup_{0<\delta<1}\norm{\delta w^\delta}_{L^\infty(\mbb{R}^n)}\leq\sigma(1+\norm{M}).
\end{equation}

To improve the regularity of $w^\delta$ to $C^{2,\alpha}(\R^n)$ we make use of interior $C^{2,\alpha}$ estimate (Theorem \ref{thm:reg-nl-apndx} (e)) instead of the interior Schauder estimate. We know from the hypothesis (F4) that $F$ is concave with respect to $M$ and from the hypothesis (F3) that for any $y,y_0\in\R^n$
\begin{equation}\label{eq:key-nl-Ca-beta}
\beta(y,y_0):=\sup_{N\in\mc{S}^n}\frac{|F(M+N,x,y)-F(M+N,x,y_0)|}{1+\norm{N}}\leq \sigma(1+\norm{M})|y-y_0|.
\end{equation}

On the other hand, since $w^\delta$ is a solution to \eqref{eq:key-nl-d} in $\R^n$, we have $w^\delta\in S(\lambda/n,\Lambda,\delta w^\delta-F(M,x,\cdot))$ in $\mbb{R}^n$. As we restrict ourselves to the cube $Q_2$, we obtain from Theorem \ref{thm:reg-nl-apndx} (b) that $w^\delta\in C^{\tilde{\alpha}}(\ov{Q}_1)$ and $\norm{w^\delta}_{C^{\tilde{\alpha}}(\ov{Q}_1)}\leq c_0(\delta^{-1}+2)\sigma(1+\norm{M})$, for each $\delta>0$. Since $Q_1$ is a periodic cube of $w^\delta$, we obtain a uniform H\"{o}lder estimate on $\delta w^\delta$ over $\mbb{R}^n$, namely,
\begin{equation}\label{eq:key-nl-d-Ca-dwd}
\sup_{0<\delta<1}\norm{\delta w^\delta}_{C^{\bar{\alpha}}(\mbb{R}^n)}\leq 3c_0\sigma(1+\norm{M}).
\end{equation}

Now Theorem \ref{thm:reg-nl-apndx} (e) applies to $w^\delta$ so that we get a constant $C_{\norm{M}}>1$ for which $w^\delta\in C^{2,\alpha}(\ov{B}_{C_{\norm{M}}^{-1}\sqrt{n}}(y_0))$ and 
\begin{equation*}
\norm{w^\delta}_{C^{2,\alpha}(\ov{B}_{C_{\norm{M}}^{-1}\sqrt{n}}(y_0))}^*\leq C_{\norm{M}}\left(\norm{w^\delta}_{L^\infty(B_{\sqrt{n}}(y_0))}+1\right)\leq\tilde{C}_{\norm{M}}\delta^{-1},
\end{equation*}
where $\norm{\cdot}_{C^{2,\alpha}(E)}^*$ is the adimensional $C^{2,\alpha}$ norm on $E$. Since $y_0\in\mbb{R}^n$ was an arbitrary point and $B_{\sqrt{n}}(y_0)$ contains a periodic cube of $w^\delta$, we obtain the estimate \eqref{eq:key-nl-d-C2a-dwd}. 
\end{proof}

Our next step is to find a uniform bound of the oscillation of $w^\delta$ for $\delta\in(0,1)$.

\begin{lem}\label{lem:key-nl-osc} Let $M\in\mc{S}^n$, $x\in\ov{\Omega}$ and $w^\delta$ be the unique solution to \eqref{eq:key-nl-d}. Then 
\begin{equation}\label{eq:key-nl-osc}
\sup_{0<\delta<1}\osc_{\mbb{R}^n}w^\delta\leq C(1+\norm{M}).
\end{equation}
Moreover, there holds 
\begin{equation}\label{eq:key-nl-d-C2a-twd}
\sup_{0<\delta<1}\norm{\tilde{w}^\delta}_{C^{2,\alpha}(\mbb{R}^n)}\leq C_{\norm{M}},
\end{equation}
where $\tilde{w}^\delta:=w^\delta-w^\delta(0)$ in $\mbb{R}^n$.
\end{lem}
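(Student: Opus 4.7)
The plan is to mirror the linear argument in Lemma \ref{lem:key-l-osc}, replacing the linear Harnack inequality by its nonlinear viscosity counterpart (Theorem \ref{thm:reg-nl-apndx}) after reducing the nonlinear equation to Pucci extremal inequalities through the uniform ellipticity (F2).

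First, I would set $\hat{w}^\delta := w^\delta - \min_{\mbb{R}^n} w^\delta \geq 0$; the minimum is attained because $w^\delta$ is bounded, $C^{2,\alpha}$, and 1-periodic. Plugging $\hat{w}^\delta$ back into \eqref{eq:key-nl-d} and subtracting $F(M,x,y)$ from both sides, uniform ellipticity gives $\hat{w}^\delta \in S(\lambda/n, \Lambda, h^\delta)$ in $\mbb{R}^n$, where
\[
h^\delta(y) = \delta\hat{w}^\delta(y) + \delta\min_{\mbb{R}^n} w^\delta - F(M,x,y).
\]
By \eqref{eq:key-nl-d-linf-dwd} the first two terms are of size $O(1+\norm{M})$ uniformly in $\delta$, and $|F(M,x,y)| \leq n\Lambda\norm{M} + |F(0,x,y)| \leq C(1+\norm{M})$ by uniform ellipticity together with (F3); hence $\norm{h^\delta}_{L^\infty(\mbb{R}^n)} \leq C(1+\norm{M})$.

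Next I would exploit periodicity exactly as in the linear case: for arbitrary $y_0 \in \mbb{R}^n$ the ball $B_{\sqrt{n}/2}(y_0)$ contains a period cube of $\hat{w}^\delta$, so $\sup_{B_{\sqrt{n}/2}(y_0)} \hat{w}^\delta = \sup_{\mbb{R}^n} \hat{w}^\delta$ and $\inf_{B_{\sqrt{n}/2}(y_0)} \hat{w}^\delta = 0$. The viscosity Harnack inequality on $B_{\sqrt{n}}(y_0)$ applied to the nonnegative $\hat{w}^\delta$ then yields $\sup_{\mbb{R}^n}\hat{w}^\delta \leq c_0\norm{h^\delta}_{L^\infty} \leq C(1+\norm{M})$, which is \eqref{eq:key-nl-osc}. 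For the second assertion, note that $\tilde{w}^\delta := w^\delta - w^\delta(0)$ satisfies $\norm{\tilde{w}^\delta}_{L^\infty(\mbb{R}^n)} \leq \osc_{\mbb{R}^n} w^\delta \leq C(1+\norm{M})$ uniformly in $\delta$ and solves $F(D_y^2\tilde{w}^\delta+M,x,y) = \delta\tilde{w}^\delta + \delta w^\delta(0)$ with uniformly bounded right-hand side. I would then repeat the bootstrap used at the end of the proof of Lemma \ref{lem:key-nl-d}: a uniform $C^{\bar{\alpha}}$ estimate on unit periodic cubes via Theorem \ref{thm:reg-nl-apndx}(b), followed by the interior $C^{2,\alpha}$ estimate, Theorem \ref{thm:reg-nl-apndx}(e), applied on balls of a fixed radius about each $y_0 \in \mbb{R}^n$ and globalized by periodicity. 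The gain over \eqref{eq:key-nl-d-C2a-dwd} is that $\tilde{w}^\delta$ itself, and not merely $\delta\tilde{w}^\delta$, is uniformly bounded, so the resulting estimate no longer carries a factor $\delta^{-1}$.

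The main obstacle sits in the first step: unlike the linear case, where subtracting $a_{ij}M_{ij}$ from both sides produces a clean right-hand side, the passage from the nonlinear equation to a Pucci inequality leaves behind the term $F(M,x,\cdot)$, which must then be controlled linearly in $\norm{M}$. This is precisely what the growth hypothesis $\tau_{\norm{M}} \leq \sigma(1+\norm{M})$ in (F3) is for; without such a linear growth one would only obtain $\osc w^\delta \leq C_{\norm{M}}$, which is insufficient for the applications (in particular for constructing the higher order correctors in Section \ref{sec:int/bdry cor&rate-nl}).
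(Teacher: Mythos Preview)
Your proposal is correct and follows exactly the route the paper indicates: mirror the proof of Lemma~\ref{lem:key-l-osc}, replacing the linear Harnack inequality (Proposition~\ref{prop:l-apndx}(c)) by the viscosity Harnack inequality (Theorem~\ref{thm:reg-nl-apndx}(a)) after placing $\hat w^\delta$ in the Pucci class $S(\lambda/n,\Lambda,h^\delta)$, and then re-running the $C^{2,\alpha}$ bootstrap from Lemma~\ref{lem:key-nl-d} on $\tilde w^\delta$ whose $L^\infty$ norm is now $\delta$-free. One small remark: your closing claim that a bound of the form $\osc w^\delta\le C_{\norm{M}}$ (rather than $C(1+\norm{M})$) would be \emph{insufficient} for Section~\ref{sec:int/bdry cor&rate-nl} is overstated --- the subsequent arguments only ever invoke these estimates on compact sets $\ov B_L$ of Hessians (cf.\ Lemma~\ref{lem:prop:nl-uconv}), so a locally uniform bound would already do; the linear dependence is a refinement, not a necessity.
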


\begin{proof} The proof follows the line of the proof of Lemma \ref{lem:key-l-osc}.
\end{proof}

It is noteworthy to observe that the derivatives of $w^\delta$ are bounded independent of $\delta\in(0,1)$. To be specific, since $Dw^\delta=D\tilde{w}^\delta$ and $D^2w^\delta=D^2\tilde{w}^\delta$, we obtain from \eqref{eq:key-nl-d-C2a-twd} that
\begin{equation}\label{eq:key-nl-d-D-wd}
\sup_{0<\delta<1}\left(\norm{Dw^\delta}_{L^\infty(\mbb{R}^n)}+\norm{D^2w^\delta}_{L^\infty(\mbb{R}^n)}+[D^2w^\delta]_{C^\alpha(\mbb{R}^n)}\right)\leq C_{\norm{M}}.
\end{equation}

We are now in position to prove Lemma \ref{lem:key-nl}. 

\begin{proof}[Proof of Lemma \ref{lem:key-nl}] One may notice that the proof of Lemma \ref{lem:key-l} has nothing to do with the linear structure of \eqref{eq:key-l}. Indeed, \eqref{eq:key-nl-d-linf-dwd} and \eqref{eq:key-nl-d-C2a-twd} respectively correspond to \eqref{eq:key-l-d-linf-dwd} and \eqref{eq:key-l-d-C2a-twd}. Hence, by the compact embedding, we are able to extract a subsequence $\{\delta_kw^{\delta_k},\tilde{w}^{\delta_k}\}_{k=1}^\infty$ from $\{\delta w^\delta,\tilde{w}^\delta\}_{0<\delta<1}$ such that
\begin{equation}\label{eq:key-nl-conv}
\norm{\delta_kw^{\delta_k}-\gamma}_{L^\infty(\mbb{R}^n)}+\norm{\tilde{w}^{\delta_k}-w}_{C^2(\mbb{R}^n)}\lra 0\quad\text{as}\quad k\lra\infty,
\end{equation}
for some $\gamma\in\mbb{R}$ and $w\in C^{2,\alpha}(\mbb{R}^n)$. In addition, we have that $
|\gamma|\leq\sigma(1+\norm{M})$ and $\norm{w}_{C^{2,\alpha}(\mbb{R}^n)}\leq C_{\norm{M}}$. The rest of the proof is exactly the same with that of Lemma \ref{lem:key-l} and hence is omitted.\end{proof} 

\begin{defn}\label{defn:wd/w/ovF-nl}
Let $(M,x)\in\mc{S}^n\times\ov{\Omega}$. 
\begin{enumerate}[(i)]
\item For each $\delta\in(0,1)$, we denote $w^\delta(\cdot;M,x)$ by the unique bounded 1-periodic solution of \eqref{eq:key-nl-d} and $\tilde{w}^\delta(\cdot;M,x)=w^\delta(\cdot;M,x)-w^\delta(0;M,x)$ in $\mbb{R}^n$. By the uniqueness of the solution, we can understand $w^\delta(y;\cdot,\cdot)$ as the mapping $(M,x)\mapsto w^\delta(y;M,x)$ defined on $\mc{S}^n\times\ov{\Omega}$ for each $y\in\mbb{R}^n$. 

\item In a similar way, we write $\ov{F}(M,x)$ by the unique number $\gamma$ of \eqref{eq:key-nl} and $w(\cdot;M,x)$ by the bounded 1-periodic solution of \eqref{eq:key-nl} which is normalized by $w(0;M,x)=0$. Again the uniqueness allows us to understand $\ov{F}$ [resp., $w(y;\cdot,\cdot)$ for each $y\in\mbb{R}^n$] as the mapping $(M,x)\mapsto\ov{F}(M,x)$ [resp., $w(y;M,x)$] defined on $\mc{S}^n\times\ov{\Omega}$.
\end{enumerate}
\end{defn}

Note that \eqref{eq:key-nl} now reads
\begin{equation}\label{eq:key-nl-re}
\begin{cases}
F(D_y^2w+M,x,y)=\ov{F}(M,x)&\text{in }\mbb{R}^n,\\
\text{$w$ is 1-periodic.}
\end{cases}
\end{equation}

The next lemma states that $\delta w^\delta$ and $\tilde{w}^\delta$ are locally Lipschitz continuous in $(M,x)$. One may also find a proof for \eqref{eq:lip-nl-dwd} in \cite{AB} and \cite{E2} regarding a more general situation. The proof for \eqref{eq:lip-nl-twd} can also be found in \cite{M} with a different argument.

\begin{lem}\label{lem:prop:nl-lip} For any $L>0$ and $(M,x),(M',x')\in\ov{B}_L\times\ov{\Omega}$, we have
\begin{align}
\label{eq:lip-nl-dwd}
&\norm{\delta w^\delta(\cdot;M',x')-\delta w^\delta(\cdot;M,x)}_{L^\infty(\R^n)}\leq C_L(\norm{M'-M}+|x'-x|),\end{align}
and 
\begin{align}
\label{eq:lip-nl-twd}
&\norm{\tilde{w}^\delta(\cdot;M',x')-\tilde{w}^\delta(\cdot;M,x)}_{L^\infty(\R^n)}\leq C_L(\norm{M'-M}+|x'-x|).
\end{align}
\end{lem}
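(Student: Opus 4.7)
The plan is to set $v := w^\delta(\cdot;M',x') - w^\delta(\cdot;M,x)$ and $\rho := \norm{M'-M}+|x'-x|$, and to show that $v$ satisfies Pucci-type inequalities whose right-hand side is comparable to $\delta v + C_L\rho$ on $\R^n$. With such inequalities in hand, \eqref{eq:lip-nl-dwd} will follow from the classical extremum principle over $\R^n$ (since $v$ is $C^{2,\alpha}$ and 1-periodic, it attains its max and min), and \eqref{eq:lip-nl-twd} will follow from a Harnack-based oscillation bound $\osc_{\R^n} v \leq C_L\rho$ in the spirit of Lemma \ref{lem:key-nl-osc}.

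To derive those Pucci inequalities I would subtract the two versions of \eqref{eq:key-nl-d} satisfied by $w^\delta(\cdot;M',x')$ and $w^\delta(\cdot;M,x)$ and split the resulting difference into an $M$-variation piece and an $x$-variation piece:
\begin{equation*}
\begin{split}
\delta v &= \bigl[F(D_y^2 w^\delta(\cdot;M',x') + M', x', \cdot) - F(D_y^2 w^\delta(\cdot;M,x) + M, x', \cdot)\bigr]\\
&\quad + \bigl[F(D_y^2 w^\delta(\cdot;M,x) + M, x', \cdot) - F(D_y^2 w^\delta(\cdot;M,x) + M, x, \cdot)\bigr].
\end{split}
\end{equation*}
Uniform ellipticity (F2) traps the first bracket between the Pucci operators $\mc{M}^\mp_{\lambda/n,\Lambda}(D_y^2 v + (M'-M))$, which by subadditivity lose only an additive $C\norm{M'-M}$ compared to $\mc{M}^\mp_{\lambda/n,\Lambda}(D_y^2 v)$. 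For the second bracket, (F3) with $m=0$ gives the bound $\sigma(1+\norm{A})|x'-x|$ at $A = D_y^2 w^\delta(\cdot;M,x) + M$, which becomes $C_L|x'-x|$ once the $\delta$-uniform bound $\norm{D_y^2 w^\delta(\cdot;M,x)}_{L^\infty(\R^n)} \leq C_L$ from \eqref{eq:key-nl-d-D-wd} is invoked.

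For \eqref{eq:lip-nl-dwd} I would evaluate the resulting inequalities at the max and min of the periodic $C^2$ function $v$: at a max point $y_+$ we have $D_y^2 v(y_+) \leq 0$, so $\mc{M}^+_{\lambda/n,\Lambda}(D_y^2 v(y_+)) \leq 0$ and hence $\delta v(y_+) \leq C_L\rho$; symmetrically at the minimum. This yields $\norm{\delta v}_{L^\infty(\R^n)} \leq C_L\rho$. For \eqref{eq:lip-nl-twd}, feeding this pointwise bound back into the Pucci inequalities sharpens them to $v \in S(\lambda/n,\Lambda,C_L\rho)$ on $\R^n$. I would then mimic Lemma \ref{lem:key-nl-osc}: apply the interior Harnack inequality (Theorem \ref{thm:reg-nl-apndx}(a)) to $\hat v := v - \min_{\R^n} v \geq 0$ on $B_{\sqrt{n}}(y_0)$, using that $B_{\sqrt{n}/2}(y_0)$ contains a full period of $v$ to identify $\sup_{B_{\sqrt{n}/2}(y_0)} \hat v$ with $\max_{\R^n}\hat v$ and $\inf_{B_{\sqrt{n}/2}(y_0)} \hat v$ with $0$, concluding $\osc_{\R^n} v \leq C_L\rho$. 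Since $\tilde{w}^\delta(y;M',x')-\tilde{w}^\delta(y;M,x) = v(y) - v(0)$, the oscillation bound is exactly \eqref{eq:lip-nl-twd}.

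The main obstacle is the $x$-variation bracket: (F3) only controls $F$ by $\sigma(1+\norm{N})|x'-x|$, so it is useless without a $\delta$-uniform bound on the Hessian $D_y^2 w^\delta(\cdot;M,x)$. It is therefore essential to call on \eqref{eq:key-nl-d-D-wd} — itself a consequence of the oscillation estimate of Lemma \ref{lem:key-nl-osc} — to keep the constant in the second bracket $\delta$-independent. Once that is in place, the remainder reduces to the familiar extremum/Harnack machinery already exploited in Lemmas \ref{lem:key-l-osc} and \ref{lem:key-nl-osc}.
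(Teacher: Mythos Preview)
Your argument is correct. For \eqref{eq:lip-nl-dwd} it is essentially the paper's comparison argument recast pointwise via Pucci extremal operators; both need the $\delta$-uniform Hessian bound \eqref{eq:key-nl-d-D-wd} to control the $x$-variation bracket, exactly as you isolate.

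The genuine divergence is in \eqref{eq:lip-nl-twd}. The paper \emph{linearizes explicitly}: it writes
\[
a_{ij}(y)=\int_0^1 F_{p_{ij}}\bigl(tD_y^2 v_1^\delta+(1-t)D_y^2 v_2^\delta+tM'+(1-t)M,\,tx'+(1-t)x,\,y\bigr)\,dt
\]
(and similarly $b_k$), so that $v^\delta=\tilde v_1^\delta-\tilde v_2^\delta$ solves a genuine linear equation $a_{ij}D_{ij}v^\delta=\phi^\delta$ with $\norm{\phi^\delta}_{L^\infty}\le C_L\rho$, and then applies the \emph{linear} Harnack inequality (Proposition~\ref{prop:l-apndx}(c)). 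You instead remain at the level of the Pucci class $S(\lambda/n,\Lambda,C_L\rho)$ and invoke the \emph{nonlinear} Harnack inequality (Theorem~\ref{thm:reg-nl-apndx}(a)). Your route is more elementary---it never asks that $F_{p_{ij}}$ exist pointwise and avoids the integral-representation bookkeeping---and it suffices perfectly well for the $L^\infty$ statement at hand. The paper's explicit linearization, on the other hand, is not wasted effort: the very same coefficients $a_{ij}^\delta$ reappear immediately in Lemma~\ref{lem:prop:nl-lip-C2a}, where their $C^{0,\alpha}$ regularity is needed to run Schauder and upgrade \eqref{eq:lip-nl-dwd}--\eqref{eq:lip-nl-twd} to $C^{2,\alpha}$ norms, and again in Lemma~\ref{lem:pd-nl-e} to identify $\ov F_{p_{kl}}$ and $D_{p_{kl}}w$. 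So the paper is front-loading machinery for what comes next, while your approach is the minimal one for this lemma alone.
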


\begin{proof} For brevity, let us denote by $v_1^\delta$ [resp., $v_2^\delta$] the function $w^\delta(\cdot;M',x')$ [resp., $w^\delta(\cdot;M,x)$]. Also by $\tilde{v}_1^\delta$ [resp., $\tilde{v}_2^\delta$] we denote $\tilde{w}^\delta(\cdot;M',x')$ [resp., $\tilde{w}^\delta(\cdot;M,x)$]. 

We prove \eqref{eq:lip-nl-dwd} first. By the Lipschitz continuity of $F$, we get
\begin{equation*}
F(D_y^2v_2^\delta+M,x,y)\geq\delta v_2^\delta-\sigma(1+L)(\norm{M'-M}+|x'-x|)
\end{equation*}
which implies that $v_2^\delta-\delta^{-1}\sigma(1+L)(\norm{M'-M}+|x'-x|)$ is a subsolution of \eqref{eq:key-nl-d}. By the comparison principle (Theorem \ref{thm:exist-nl-apndx}), we arrive at
\begin{equation*}
\delta v_2^\delta-\delta v_1^\delta\leq\sigma(1+L)(\norm{M'-M}+|x'-x|)\quad\text{in }\mbb{R}^n.
\end{equation*}
By a similar argument, we obtain \eqref{eq:lip-nl-dwd} with $C_L\geq\sigma(1+L)$.

Now we move on to the proof of \eqref{eq:lip-nl-twd}. The main idea is to use the linearisation of $F$. Define $a_{ij}^\delta=\int_0^1F_{p_{ij}}(N_t^\delta,x_t,\cdot)dt$ and $b_k^\delta=\int_0^1F_{x_k}(N_t^\delta,x_t,\cdot)dt$ where $N_t^\delta:=t\{D^2v_1^\delta+M'\}+(1-t)\{D^2v_2^\delta+M\}$ and $x_t:=tx+(1-t)x'$. It is immediate from the structure conditions (F1)-(F3) that $a_{ij}^\delta$ and $b_k^\delta$ ($i,j,k=1,\ldots,n$) are 1-periodic and uniformly bounded in $\R^n$ by the Lipschitz constant of $F$. Furthermore, $(a_{ij}^\delta)$ is uniformly elliptic with the same ellipticity constants $\lambda$ and $\Lambda$ of $F$.

Now define $v^\delta:=v_1^\delta-v_2^\delta$ and $\tilde{v}^\delta:=\tilde{v}_1^\delta-\tilde{v}_2^\delta$. Then $v^\delta,\tilde{v}^\delta\in C^{2,\alpha}(\R^n)$ solve
\begin{equation}\label{eq:key-nl-d-lin}
a_{ij}^\delta D_{ij}w+a_{ij}^\delta(M_{ij}'-M_{ij})+b_k^\delta(x_k'-x_k)=\delta v^\delta\quad\text{in }\R^n.
\end{equation}
As this equation belongs to the same class of \eqref{eq:key-l-d}, we arrive the conclusion by the same argument used in Lemma \ref{lem:key-l-osc}. We left the details to the readers.
\end{proof}

\begin{lem}\label{lem:prop:nl-uconv} The convergence in \eqref{eq:key-nl-conv} is uniform in $(M,x)\in\ov{B}_L\times\ov{\Omega}$ for each $L>0$; i.e.,
\begin{equation}\label{eq:prop:nl-uconv}
\begin{split}
&\lim_{\delta\ra 0}\sup_{(M,x)\in\ov{B}_L\times\ov{\Omega}}\norm{\delta w^\delta(\cdot;M,x)-\ov{F}(M,x)}_{L^\infty(\mbb{R}^n)}=0,\\
\end{split}
\end{equation}
and
\begin{equation}\label{eq:prop:nl-uconv'}
\begin{split}
&\lim_{\delta\ra 0}\sup_{(M,x)\in\ov{B}_L\times\ov{\Omega}}\norm{\tilde{w}^\delta(\cdot;M,x)-w(\cdot;M,x)}_{C^2(\mbb{R}^n)}=0.
\end{split}
\end{equation}
\end{lem}

\begin{proof} Fix $L>0$. Put $C_L=\sup\{C_{\norm{M}}:M\in\ov{B}_L\}$ and then take $\tilde{C}_L=\max\{\sigma(1+L),C_L\}$. Then it follows from \eqref{eq:key-nl-d-linf-dwd} and \eqref{eq:key-nl-d-C2a-twd} that for any $\delta\in(0,1)$,
\begin{equation}\label{eq:key-nl-est-dwd/twd}
\sup_{(M,x)\in\ov{B}_L\times\ov{\Omega}}\left\{\norm{\delta w^\delta(\cdot;M,x)}_{L^\infty(\mbb{R}^n)},\norm{\tilde{w}^\delta(\cdot;M,x)}_{C^{2,\alpha}(\mbb{R}^n)}\right\}\leq\tilde{C}_L.
\end{equation}
The above uniform estimates allow us to extract a subsequence $\{\delta_k w^{\delta_k}\}_{k=1}^\infty$ [resp. $\{\tilde{w}^{\delta_k}\}_{k=1}^\infty$] from $\{\delta w^\delta\}_{0<\delta<1}$ [resp. $\{\tilde{w}^\delta\}_{0<\delta<1}$] such that \eqref{eq:key-nl-conv} holds regardless of a particular choice of $(M,x)\in\ov{B}_L\times\ov{\Omega}$. The rest of the proof is the same with that in Lemma \ref{lem:key-nl}.
\end{proof}

It is an immediate consequence of Lemma \ref{lem:prop:nl-lip} and \ref{lem:prop:nl-uconv} that the effective operator $\ov{F}$ and the corresponding corrector $w(y;\cdot,\cdot)$ are locally Lipschitz continuous (uniform in $y$). Due to its particular role in the rest of this paper, we present the statement without proof.

\begin{lem}\label{lem:prop:nl-lip-h} $\ov{F}$ and $w(y;\cdot,\cdot)$ are Lipschitz continuous locally in $\mc{S}^n$ and globally in $\ov{\Omega}$. Moreover, the Lipschitz continuity of the latter is uniform in $y\in\R^n$.
\end{lem}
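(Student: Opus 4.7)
The proof is essentially a limiting argument that promotes the uniform-in-$\delta$ estimates of Lemma \ref{lem:prop:nl-lip} to the corresponding statements for the limit objects $\ov{F}$ and $w(\cdot;M,x)$, using the uniform convergence provided by Lemma \ref{lem:prop:nl-uconv}.

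Fix $L>0$ and let $(M,x),(M',x')\in\ov{B}_L\times\ov{\Omega}$. First I would handle $\ov{F}$. From Lemma \ref{lem:prop:nl-lip}, one has
\begin{equation*}
\norm{\delta w^\delta(\cdot;M',x')-\delta w^\delta(\cdot;M,x)}_{L^\infty(\R^n)}\leq C_L(\norm{M'-M}+|x'-x|)
\end{equation*}
for every $\delta\in(0,1)$. Evaluating at any point $y\in\R^n$ and invoking the uniform convergence $\delta w^\delta(\cdot;M,x)\to\ov{F}(M,x)$ of Lemma \ref{lem:prop:nl-uconv} on the compact set $\ov{B}_L\times\ov{\Omega}$, I pass to the limit $\delta\to 0$ to obtain
\begin{equation*}
|\ov{F}(M',x')-\ov{F}(M,x)|\leq C_L(\norm{M'-M}+|x'-x|),
\end{equation*}
which is the desired local Lipschitz continuity of $\ov{F}$.

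Next I would treat $w(y;\cdot,\cdot)$ by the same device applied to the second estimate of Lemma \ref{lem:prop:nl-lip},
\begin{equation*}
\norm{\tilde{w}^\delta(\cdot;M',x')-\tilde{w}^\delta(\cdot;M,x)}_{L^\infty(\R^n)}\leq C_L(\norm{M'-M}+|x'-x|).
\end{equation*}
Fix $y\in\R^n$. By Lemma \ref{lem:prop:nl-uconv}, $\tilde{w}^\delta(y;M,x)\to w(y;M,x)$ uniformly in $(M,x)\in\ov{B}_L\times\ov{\Omega}$, so letting $\delta\to 0$ in the previous inequality yields
\begin{equation*}
|w(y;M',x')-w(y;M,x)|\leq C_L(\norm{M'-M}+|x'-x|),
\end{equation*}
and since the constant $C_L$ is independent of $y$, this Lipschitz bound is uniform in $y\in\R^n$.

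There is no serious obstacle here; the work has already been done in Lemmas \ref{lem:prop:nl-lip} and \ref{lem:prop:nl-uconv}, and the only thing to be careful about is that the Lipschitz constant $C_L$ obtained in Lemma \ref{lem:prop:nl-lip} does not depend on $\delta$, so it survives the passage to the limit. The normalization $\tilde{w}^\delta(0;M,x)=0$ for all $\delta$ guarantees that the limit object one obtains is precisely $w(\cdot;M,x)$ (which satisfies $w(0;M,x)=0$) rather than some translate, which is what makes the second estimate yield the uniform-in-$y$ Lipschitz bound on $w$ itself.
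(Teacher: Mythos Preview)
Your proposal is correct and follows exactly the route the paper intends: the paper in fact presents this lemma without proof, stating only that it is an immediate consequence of Lemma~\ref{lem:prop:nl-lip} and Lemma~\ref{lem:prop:nl-uconv}, which is precisely the limiting argument you carry out.
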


There are additional properties of $\ov{F}$. A more general proof is contained in \cite{E2}. Here we make a slight adjustment of the proof according to our situation; the main difference is that we have $C^{2,\alpha}$-corrector, which makes the proof simpler.

\begin{lem}\label{lem:prop:nl} 
\begin{enumerate}[(i)]
\item $\ov{F}$ is uniformly elliptic with the same constants $\lambda$ and $\Lambda$ of $F$.
\item $\ov{F}$ is concave on $\mc{S}^n$.
\end{enumerate}
\end{lem}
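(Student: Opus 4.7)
Both assertions will be deduced from the approximated cell problem \eqref{eq:key-nl-d} by combining a short comparison-principle argument with the uniform convergence $\delta w^\delta(\cdot;M,x)\to\ov{F}(M,x)$ in $L^\infty(\mbb{R}^n)$ provided by Lemma \ref{lem:prop:nl-uconv}. Fix $x\in\ov{\Omega}$ and $\delta\in(0,1)$ throughout; for every $M$, the penalized operator $G(P,r,y)=F(P+M,x,y)-\delta r$ is strictly decreasing in $r$, so Theorem \ref{thm:exist-nl-apndx} (a) furnishes the comparison principle we need.

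For (i), fix $M\in\mc{S}^n$ and $N\in\mc{S}^n$ with $N\geq 0$, and set $v_1^\delta=w^\delta(\cdot;M,x)$, $v_2^\delta=w^\delta(\cdot;M+N,x)$. Applying (F2) to $F(D_y^2v_1^\delta+M+N,x,y)$ and subtracting the identity $F(D_y^2v_1^\delta+M,x,y)=\delta v_1^\delta$ yields
\begin{equation*}
F(D_y^2v_1^\delta+M+N,x,y)\geq\delta v_1^\delta+\lambda\norm{N},
\end{equation*}
which shows that $v_1^\delta+\delta^{-1}\lambda\norm{N}$ is a classical (hence viscosity) subsolution of the cell problem at $M+N$. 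Comparison against $v_2^\delta$ gives $\delta v_2^\delta-\delta v_1^\delta\geq\lambda\norm{N}$ pointwise in $\mbb{R}^n$, and passing to the limit $\delta\to 0$ via Lemma \ref{lem:prop:nl-uconv} produces the lower bound $\ov{F}(M+N,x)-\ov{F}(M,x)\geq\lambda\norm{N}$. The upper bound follows by the symmetric argument, using instead the rightmost inequality in (F2) to build a supersolution.

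For (ii), fix $M_1,M_2\in\mc{S}^n$ and $t\in[0,1]$, write $M=tM_1+(1-t)M_2$, and let $w_i^\delta=w^\delta(\cdot;M_i,x)$. Since each $w_i^\delta$ lies in $C^{2,\alpha}(\mbb{R}^n)$ by Lemma \ref{lem:key-nl-d}, the convex combination $w^\delta:=tw_1^\delta+(1-t)w_2^\delta$ is itself a $1$-periodic $C^{2,\alpha}$ function, and the concavity hypothesis (F4) applied pointwise gives
\begin{equation*}
F(D_y^2w^\delta+M,x,y)\geq tF(D_y^2w_1^\delta+M_1,x,y)+(1-t)F(D_y^2w_2^\delta+M_2,x,y)=\delta w^\delta.
\end{equation*}
Hence $w^\delta$ is a classical subsolution of the cell problem at $M$; comparison against $w^\delta(\cdot;M,x)$, followed by multiplication by $\delta$ and passage to the limit, gives $t\ov{F}(M_1,x)+(1-t)\ov{F}(M_2,x)\leq\ov{F}(M,x)$. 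Neither step is deep: the only thing to verify is that the penalized comparison principle applies to the classical subsolutions produced by translating or averaging $C^{2,\alpha}$ correctors, and that the uniform convergence of Lemma \ref{lem:prop:nl-uconv} survives in the pointwise inequalities above. Both are immediate from the theory already established, so the proof reduces to the two short arguments sketched here.
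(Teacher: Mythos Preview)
Your argument is correct. Both parts rest on the same comparison-principle idea as the paper, but you implement it at the level of the $\delta$-penalized problems \eqref{eq:key-nl-d}, whereas the paper works directly with the limiting cell problem \eqref{eq:key-nl-re}. Concretely, for (i) you build the subsolution $v_1^\delta+\delta^{-1}\lambda\norm{N}$ and compare it to $v_2^\delta$, then let $\delta\to 0$ via Lemma \ref{lem:prop:nl-uconv}; the paper instead assumes $\ov{F}(M+N,x)-\ov{F}(M,x)<\lambda\norm{N}$ for a contradiction, shifts the corrector $w^{M+N}$ by a constant so that $w^{M+N}<w^M$, shows $w^{M+N}$ is then a strict supersolution of the equation satisfied by $w^M$, and derives $w^{M+N}\geq w^M$ by a periodic comparison argument. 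For (ii) the contrast is the same: you average the penalized correctors and compare directly, the paper averages the limit correctors and argues by contradiction with a constant shift. Your route is arguably more transparent since the $-\delta r$ term gives an honest strict monotonicity and hence an unambiguous comparison principle (Theorem \ref{thm:exist-nl-apndx} (a)), at the cost of invoking the uniform convergence of Lemma \ref{lem:prop:nl-uconv}; the paper's route is slightly more self-contained at the corrector level but leans on the freedom to translate solutions of \eqref{eq:key-nl-re} by constants together with a periodic touching argument.
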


\begin{proof} The proof for the assertion (i) is similar to that of the assertion (ii) of Lemma \ref{lem:eff op-l}, so is omitted. 

Now we establish the proof of (ii). Let $M,N\in\mc{S}^n$ and $x\in\ov{\Omega}$ be given. For simplicity let us write $w^M$ by the solutions of \eqref{eq:key-nl-re} with respect to $M$. 

Suppose toward a contradiction that there is some $t\in(0,1)$ and $M,N\in\mc{S}^n$ such that
\begin{equation*}
\ov{F}(tM+(1-t)N,x)<t\ov{F}(M,x)+(1-t)\ov{F}(N,x).
\end{equation*}
Put $X:=tM+(1-t)N\in\mc{S}^n$. Adding a constant to $w^X$ if necessary, we may assume that $w^X<tw^M+(1-t)w^N$ in $\mbb{R}^n$. Then we obtain from the concavity of $F$ that
\begin{equation*}
\ov{F}(X,x)<t\ov{F}(M,x)+(1-t)\ov{F}(N,x)\leq F(X+D_y^2(tw^M+(1-t)w^N),x,y)
\end{equation*}
in $\mbb{R}^n$. However, since $\ov{F}(X+D_y^2w^X,x,y)=\ov{F}(X,x)$ in $\R^n$, the comparison principle implies that $w^X\geq tw^M+(1-t)w^N$ in $\mbb{R}^n$, which is a contradiction.
\end{proof}

As we mentioned in the beginning of this section, we determine the equation which $u$ solves in the viscosity sense.

\begin{lem}\label{lem:main-nl-h} Assume that $F\in C(\mc{S}^n\times\ov{\Omega}\times\mbb{R}^n)$ satisfy the hypotheses (F1)-(F4). Then the function $u$ from Lemma \ref{lem:limit-nl} solves 
\begin{equation}\tag{$\ov{F}$}\label{eq:nl-h}
\begin{cases}
\ov{F}(D^2u,x)=0&\text{in }\Omega,\\
u=g&\text{on }\p\Omega.
\end{cases}
\end{equation}
Moreover, $u$ is unique and belongs to the class of $C^{2,\alpha}(\ov{\Omega})$.
\end{lem}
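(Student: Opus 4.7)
The plan is to follow the perturbed test function method of Evans, mirroring the argument in Lemma \ref{lem:main-l-h} but using the nonlinear corrector from Lemma \ref{lem:key-nl} in place of the linear one. First, the boundary condition $u=g$ on $\p\Omega$ is immediate: along the subsequence supplied by Lemma \ref{lem:limit-nl}, $u^\e\to u$ uniformly on $\ov{\Omega}$, and $u^\e=g$ on $\p\Omega$ for every $\e$.

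For the interior equation, suppose $\vp\in C^2$ touches $u$ strictly from above at some $x_0\in\Omega$, and assume for contradiction that $\ov{F}(D^2\vp(x_0),x_0)<0$. Setting $M=D^2\vp(x_0)$, I would form the corrected test function
\begin{equation*}
\vp^\e(x)=\vp(x)+\e^2 w\!\left(\tfrac{x}{\e};M,x_0\right),
\end{equation*}
where $w(\cdot;M,x_0)\in C^{2,\alpha}(\R^n)$ is the corrector from Definition \ref{defn:wd/w/ovF-nl}. The crucial point is to freeze the corrector at $(M,x_0)$, just as in the linear case, so that differentiating $\vp^\e$ produces no $\e^{-1}$ terms; we get $D^2\vp^\e(x)=D^2\vp(x)+D_y^2 w(x/\e;M,x_0)$. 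At $x=x_0$, the corrector equation \eqref{eq:key-nl-re} gives exactly $F(D^2\vp^\e(x_0),x_0,x_0/\e)=\ov{F}(M,x_0)<0$. Using (F3) together with the $1$-periodicity in $y$ (which makes continuity of $F$ in $(M,x)$ uniform in $y$) and the boundedness of $D^2w(\cdot;M,x_0)$ from \eqref{eq:key-nl-est-w}, this strict inequality persists in some ball $B_r(x_0)$ for all small $\e$. Hence $\vp^\e$ is a classical strict supersolution of \eqref{eq:nl} in $B_r(x_0)$. The comparison principle for $F(\cdot,\cdot,x/\e)$ and uniform convergence $\vp^\e\to\vp$, $u^\e\to u$ then force $(u-\vp)(x_0)\leq\max_{\p B_r(x_0)}(u-\vp)$, contradicting the strict touching. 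The reverse inequality for $\vp$ touching from below is symmetric, so $u$ is a viscosity solution of \eqref{eq:nl-h}.

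For uniqueness and the convergence of the full family, I would invoke the standard comparison principle for viscosity solutions of $\ov{F}(D^2u,x)=0$, which applies because $\ov{F}$ is uniformly elliptic by Lemma \ref{lem:prop:nl}(i) and Lipschitz in $x$ by Lemma \ref{lem:prop:nl-lip-h}. Uniqueness then upgrades the subsequential convergence of Lemma \ref{lem:limit-nl} to convergence of the whole family $u^\e\to u$. For $C^{2,\alpha}$ regularity, I would combine the concavity of $\ov{F}$ in $M$ (Lemma \ref{lem:prop:nl}(ii)), its uniform ellipticity, and its Lipschitz continuity in $x$ to apply the interior Evans--Krylov $C^{2,\alpha}$ estimate in the form of Theorem \ref{thm:reg-nl-apndx}(e), together with a corresponding boundary $C^{2,\alpha}$ estimate justified by $g,\p\Omega\in C^{m+2,1}\subset C^{2,\alpha}$.

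The main obstacle is the uniform supersolution step: one must verify that the freeze-at-$(M,x_0)$ device produces $F(D^2\vp^\e(x),x,x/\e)<0$ on a ball whose radius is \emph{independent} of $\e$. This is where the argument genuinely departs from the linear case, since the nonlinearity in $M$ prevents the cancellations that occur in \eqref{eq:chi-l-2}. The remedy is to exploit that, after freezing, $D_y^2 w(x/\e;M,x_0)$ takes values in the fixed compact set $\{D_y^2 w(y;M,x_0):y\in\R^n\}$ (by periodicity), so the continuity of $F$ in $(M,x)$ furnished by (F3), uniform in $y$ by periodicity, yields the required uniform-in-$\e$ strict inequality near $x_0$.
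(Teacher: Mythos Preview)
Your proposal is correct and follows essentially the same route as the paper's own proof, which is in fact only a terse sketch: the paper simply says the viscosity solution argument is ``similar to that of Lemma \ref{lem:main-l-h}'' with the strong maximum principle replaced by the comparison principle of Theorem \ref{thm:exist-nl-apndx}(a), and that $C^{2,\alpha}$ regularity follows from Theorem \ref{thm:reg-nl-apndx}(e) via the properties of $\ov{F}$ in Lemma \ref{lem:prop:nl}. You have filled in precisely these details, including the point the paper glosses over---that freezing the corrector at $(M,x_0)$ together with periodicity in $y$ and (F3) makes the strict supersolution inequality hold on a ball of $\e$-independent radius---and your treatment of uniqueness and boundary $C^{2,\alpha}$ regularity is likewise in line with the paper's intent.
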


\begin{proof} The proof of that $u$ is a viscosity solution of \eqref{eq:nl-h} is similar to that of Lemma \ref{lem:main-l-h}. Instead of using strong maximum principle, one may take advantage of Theorem \ref{thm:exist-nl-apndx} (a). The details are left to the readers.

As long as we know that $u$ solves \eqref{eq:nl-h}, the fact that $u\in C^{2,\alpha}(\Omega)$ follows readily from Theorem \ref{thm:reg-nl-apndx} (e). The proof is similar to that in Lemma \ref{lem:key-nl-d}, so the details are omitted; instead of taking advantage of (F1)-(F4), we use Lemma \ref{lem:prop:nl} (i)-(iii). We make a remark here that the exponent $\alpha$ is the same with which we chose in Lemma \ref{lem:key-nl-d} because the ellipticity constants of $\ov{F}$ coincide with those of $F$ (Lemma \ref{lem:prop:nl} (i)).
\end{proof}

\subsection{Regularity of the effective operator and the corrector}\label{sec:reg-nl}

In the previous subsection, we observed that the Lipschitz regularity of $F$, in particular in the $(M,x)$-variable, yields the Lipschitz regularity of $\ov{F}$ and $w(y;\cdot,\cdot)$, where the regularity for the latter is uniform in $y\in\R^n$. Then, it is natural to ask whether higher regularity of $F$ in $(M,x)$-variable gives higher regularity for $\ov{F}$ and $w(y;\cdot,\cdot)$, and we prove in this subsection that the answer is affirmative. Specifically, we observe that they have the same regularity as $F$ does. This regularity result plays the key role in the rest of this paper, especially in seeking higher order interior correctors. To be precise, we observe the following. 

\begin{prop}\label{prop:reg-nl} $\ov{F}$ and $w(y;\cdot,\cdot)$ are $C^{m,1}$ locally in $\mc{S}^n$ and globally in $\ov{\Omega}$ and for any $L>0$, 
\begin{equation}\label{eq:reg-nl}
\norm{\ov{F}}_{C^{m,1}(\ov{B}_L\times\ov{\Omega})}+\norm{w(y,\cdot,\cdot)}_{C^{m,1}(\ov{B}_L\times\ov{\Omega})}\leq C_{L,m}
\end{equation}
Moreover, for any $(M',x'),(M,x)\in\ov{B}_L\times\ov{\Omega}$ there holds
\begin{equation}\label{eq:reg-nl-C2a}
\begin{split}
\sum_{0\leq i+j\leq m-1}&\norm{D_p^iD_x^jw(\cdot;M',x')-D_p^iD_x^jw(\cdot;M,x)}_{C^{2,\alpha}(\R^n)}\\
&\quad\quad\quad\quad\leq C_{L,m}(\norm{M'-M}+|x'-x|).
\end{split}
\end{equation}
\end{prop}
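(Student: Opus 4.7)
The plan is to proceed by induction on the order $k=i+j$ of the derivative to be constructed, using the linear cell-problem theory of Section \ref{sec:l} as the workhorse at every step. The base case $k=0$ is exactly the Lipschitz regularity already established in Lemma \ref{lem:prop:nl-lip-h}. The key observation powering the induction is that the formal differentiation of the cell problem $F(D_y^2 w(y;M,x) + M, x, y) = \ov{F}(M,x)$ in the $(M,x)$-variable produces \emph{linear} cell problems with 1-periodic coefficients, uniformly elliptic with the constants $\lambda,\Lambda$.

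Concretely, for $k=1$ and a direction $\xi\in\mc{S}^n$, formal differentiation yields
\begin{equation*}
A_{ij}(y;M,x) D_{y_iy_j} v + A_{ij}(y;M,x)\xi_{ij} = D_M\ov{F}(M,x)\cdot\xi\quad\text{in }\R^n,
\end{equation*}
where $A_{ij}(y;M,x):=F_{p_{ij}}(D_y^2 w(y;M,x)+M,x,y)$. Since $w(\cdot;M,x)\in C^{2,\alpha}(\R^n)$ by Lemma \ref{lem:key-nl} and $F\in C^{m,1}$, the coefficients $A_{ij}(\cdot;M,x)$ are 1-periodic and $C^\alpha$; therefore Lemma \ref{lem:key-l} applied to this linearised operator furnishes a unique constant $\gamma$ (the candidate for $D_M\ov{F}(M,x)\cdot\xi$) and a unique normalised 1-periodic solution $v^\xi\in C^{2,\alpha}(\R^n)$. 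The $x$-derivative is handled identically, the source term $A_{ij}\xi_{ij}$ being replaced by $F_{x_k}(D_y^2 w+M,x,y)$, which is likewise 1-periodic and $C^\alpha$ in $y$.

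The second step is the rigorous identification of $v^\xi$ (resp. $v^k$) with the actual partial derivative of $w(y;M,x)$ in $M$ (resp. $x_k$). One subtracts the cell problems for $w(\cdot;M+t\xi,x)$ and $w(\cdot;M,x)$, linearises along the segment exactly as in the proof of Lemma \ref{lem:prop:nl-lip}, and obtains a linear cell equation for the difference quotient $[w(y;M+t\xi,x)-w(y;M,x)]/t$ whose coefficients converge uniformly to $A_{ij}(\cdot;M,x)$ as $t\to 0$ thanks to Lemma \ref{lem:prop:nl-lip-h}. Uniform-in-$t$ Schauder estimates (as in Lemma \ref{lem:key-l-osc}) combined with the uniqueness in Lemma \ref{lem:key-l} force $C^2$-convergence of the (normalised) difference quotients to $v^\xi$ and convergence of the cell averages to $\gamma$. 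This simultaneously proves differentiability of $w(y;\cdot,\cdot)$ and $\ov{F}$ and identifies their derivatives with the objects produced by the linear cell problem.

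The inductive step $k\to k+1$ repeats this scheme after one more differentiation of the cell problem and application of the chain rule: the equation for $D_p^i D_x^j w$ becomes a linear cell problem with the same operator $A_{ij}(\cdot;M,x)D_{y_iy_j}$ and a source term which is a polynomial expression in derivatives of $F$ of order $\le i+j$ and in the $y$-Hessians of lower-order derivative correctors $D_p^{i'}D_x^{j'}w$ with $i'+j'\le i+j-1$. By the induction hypothesis these lower-order correctors lie in $C^{2,\alpha}(\R^n)$ with norms bounded uniformly on compact sets of $(M,x)$, so the source is periodic and $C^\alpha$ with controlled norm; Lemma \ref{lem:key-l} then delivers both the new derivative of $\ov{F}$ and the new corrector together with the bound \eqref{eq:reg-nl}. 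The Lipschitz stability \eqref{eq:reg-nl-C2a} is a parallel argument: the difference $D_p^i D_x^j w(\cdot;M',x')-D_p^i D_x^j w(\cdot;M,x)$ satisfies a linearised cell problem whose inhomogeneity, by the $C^{m,1}$-regularity of $F$ and the already established regularity of lower-order correctors, is $O(\norm{M'-M}+|x'-x|)$ in $C^\alpha$; a final application of the linear Schauder estimate on the cell problem closes the estimate.

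The main obstacle is the rigorous passage from the merely Lipschitz stability of Lemma \ref{lem:prop:nl-lip} to genuine $C^{2,\alpha}$-stability of the difference quotients, as required to make the differentiation step legitimate rather than formal. The mechanism that overcomes this is precisely the observation that the difference quotients solve linear cell problems with $C^\alpha$-in-$y$ right-hand sides, so that the normalised difference quotients inherit a uniform $C^{2,\alpha}$-bound by Schauder; then Arzelà--Ascoli together with the uniqueness statement of Lemma \ref{lem:key-l} pins down the limit and closes the loop. The same mechanism iterated simultaneously produces the higher $C^{m,1}$ regularity and the quantitative stability \eqref{eq:reg-nl-C2a}.
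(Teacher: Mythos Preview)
Your proposal is correct and follows essentially the same route as the paper: linearise the cell problem, show the difference quotients satisfy uniformly elliptic linear cell equations with $C^\alpha$ coefficients and sources, invoke the linear theory of Lemma~\ref{lem:key-l} together with Schauder/Harnack to get uniform $C^{2,\alpha}$ bounds and identify the limits, then iterate. The only presentational difference is that for the Lipschitz continuity of the derivatives the paper explicitly reintroduces the $\delta$-penalised problems (Lemma~\ref{lem:pd-nl-lip}, equation~\eqref{eq:lin-nl-d}) and argues at the $\delta$-level before passing to the limit, whereas you work directly with the limiting cell equation for $D_p^iD_x^j w$; since the linear cell theory already encapsulates the $\delta$-approximation and the constant can be bounded by evaluating the equation at extremal points of the periodic difference, your shortcut is legitimate and the two arguments are equivalent.
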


\begin{rem} Note that the estimate \eqref{eq:reg-nl-C2a} implies that $D_y^iw(y;\cdot,\cdot)\in C^{m-1,1}(\ov{B}_L\times\ov{\Omega})$ for $i=1,2$. This will turn out as the coupling effect as we mentioned in Sect. \ref{sec:intro}.
\end{rem}

Before we begin the proof, let us illustrate the heuristics of our argument. In the first place, we only assume that $F$ satisfies the structure condition (F3) with $m=1$, which means that $F$ is $C^{1,1}$ locally in $\mc{S}^n$ and globally in $\ov{\Omega}\times\R^n$, and arrive at the conclusion that $\ov{F}$ and $w(y;\cdot,\cdot)$ are also $C^{1,1}$ locally in $\mc{S}^n$ and globally in $\ov{\Omega}$. We also observe that the equation, which involves the partial derivatives of $\ov{F}$ and $w(y;\cdot,\cdot)$ in $M$ and $x$-variable, satisfies the same structure conditions of $F$. This implies that under our original assumption (F3) we are able to iterate the argument to get $C^{m,1}$ regularity of $\ov{F}$ and $w(y;\cdot,\cdot)$ which is local in $\mc{S}^n$ and global in $\ov{\Omega}$.  

As the first step, we prove that if $F\in C^{1,1}$, then the $L^\infty$-norm in \eqref{eq:lip-nl-dwd} and \eqref{eq:lip-nl-twd} can be improved by $C^{2,\alpha}$-norm. 

\begin{lem}\label{lem:prop:nl-lip-C2a} For each $L>0$ and $(M,x),(M',x')\in\ov{B}_L\times\ov{\Omega}$, there hold for all $\delta\in(0,1)$,
\begin{equation}\label{eq:lip-nl-C2a-dwd}
\norm{\delta w^\delta(\cdot;M',x')-\delta w^\delta(\cdot;M,x)}_{C^{2,\alpha}(\mbb{R}^n)}\leq C_L(\norm{M'-M}+|x'-x|)
\end{equation}
and 
\begin{equation}\label{eq:lip-nl-C2a-twd}
\norm{\tilde{w}^\delta(\cdot;M',x')-\tilde{w}^\delta(\cdot;M,x)}_{C^{2,\alpha}(\mbb{R}^n)}\leq C_L(\norm{M'-M}+|x'-x|).
\end{equation}
\end{lem}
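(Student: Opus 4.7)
The plan is to pass from the $L^\infty$-estimate of Lemma~\ref{lem:prop:nl-lip} to the desired $C^{2,\alpha}$-estimate by applying an interior Schauder estimate to the linear equation already derived there. The key ingredient making this work is the uniform-in-$\delta$ $C^\alpha$-control of $D^2 w^\delta$ supplied by \eqref{eq:key-nl-d-D-wd}, which lifts the linearized coefficients from merely bounded to uniformly H\"older continuous.

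First, I would recall the linearization from the proof of Lemma~\ref{lem:prop:nl-lip}: writing $v_1^\delta = w^\delta(\cdot;M',x')$, $v_2^\delta = w^\delta(\cdot;M,x)$ and $v^\delta = \tilde v_1^\delta - \tilde v_2^\delta$, there exist $1$-periodic coefficients
\[
a_{ij}(y) = \int_0^1 F_{p_{ij}}\bigl(P_t(y), X_t, y\bigr)\,dt,\qquad
b_k(y) = \int_0^1 F_{x_k}\bigl(P_t(y), X_t, y\bigr)\,dt,
\]
with $P_t(y) = t(D^2 v_1^\delta(y) + M') + (1-t)(D^2 v_2^\delta(y) + M)$ and $X_t = tx' + (1-t)x$, such that $(a_{ij})$ is uniformly elliptic with constants $\lambda,\Lambda$ and $v^\delta \in C^{2,\alpha}(\R^n)$ solves
\[
a_{ij}(y)D_{ij} v^\delta = \phi^\delta(y) \quad\text{in }\R^n,
\]
where $\phi^\delta = \delta v_1^\delta - \delta v_2^\delta - a_{ij}(M_{ij}' - M_{ij}) - b_k(x_k' - x_k)$.

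The main work, and the step I expect to be the real obstacle, is showing that $\norm{a_{ij}}_{C^\alpha(\R^n)} + \norm{b_k}_{C^\alpha(\R^n)} \leq C_L$ uniformly in $\delta \in (0,1)$ and in $(M,x),(M',x') \in \ov{B}_L\times\ov{\Omega}$. This follows via the chain rule: the argument $P_t(y)$ takes values in a fixed ball by \eqref{eq:key-nl-d-D-wd}, and the same estimate gives $[P_t(\cdot)]_{C^\alpha(\R^n)} \leq C_L$ uniformly in $t$ and $\delta$; together with the local Lipschitz continuity of $F_{p_{ij}}$ and $F_{x_k}$ in $(p,x,y)$ (available since (F3) holds with $m \geq 2$), this yields the desired uniform H\"older bound. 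Periodicity of $a_{ij}, b_k$ is automatic from the periodicity of $F$ and of $D^2 w^\delta$.

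With this in hand, I would invoke the interior Schauder estimate on the ball $B_{\sqrt n}(y_0)$ for an arbitrary $y_0 \in \R^n$:
\[
\norm{v^\delta}_{C^{2,\alpha}(B_1(y_0))} \leq C_L\bigl(\norm{v^\delta}_{L^\infty(B_{\sqrt n}(y_0))} + \norm{\phi^\delta}_{C^\alpha(B_{\sqrt n}(y_0))}\bigr),
\]
where $C_L$ depends only on $\lambda,\Lambda$ and the coefficient bound from the previous step. The $L^\infty$-norm of $v^\delta$ is controlled by \eqref{eq:lip-nl-twd}, while the $C^\alpha$-norm of $\phi^\delta$ is controlled by \eqref{eq:lip-nl-dwd} together with the uniform coefficient estimate, both by $C_L(\norm{M'-M} + |x'-x|)$. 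Periodicity of $v^\delta$ and the arbitrariness of $y_0$ then deliver \eqref{eq:lip-nl-C2a-twd}. Finally, \eqref{eq:lip-nl-C2a-dwd} follows by decomposing
\[
\delta w^\delta(\cdot;M',x') - \delta w^\delta(\cdot;M,x) = \delta\tilde w^\delta(\cdot;M',x') - \delta\tilde w^\delta(\cdot;M,x) + \delta\bigl(w^\delta(0;M',x') - w^\delta(0;M,x)\bigr),
\]
estimating the derivatives of the first two terms by $\delta \leq 1$ times \eqref{eq:lip-nl-C2a-twd}, and the $L^\infty$-norm of the whole expression by \eqref{eq:lip-nl-dwd}.
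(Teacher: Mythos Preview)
Your overall strategy matches the paper's: linearize, show the coefficients $a_{ij},b_k$ are uniformly $C^\alpha$ via \eqref{eq:key-nl-d-D-wd} and (F3), then apply interior Schauder on a ball containing a period cube. The derivation of \eqref{eq:lip-nl-C2a-dwd} from \eqref{eq:lip-nl-C2a-twd} at the end is also correct.

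There is one genuine gap. You write the equation as $a_{ij}D_{ij}\tilde v^\delta = \phi^\delta$ with $\phi^\delta = \delta v_1^\delta - \delta v_2^\delta - a_{ij}(M'_{ij}-M_{ij}) - b_k(x'_k-x_k)$ and then claim that $\norm{\phi^\delta}_{C^\alpha}$ is controlled by \eqref{eq:lip-nl-dwd}. But \eqref{eq:lip-nl-dwd} is only an $L^\infty$ bound on $\delta v_1^\delta - \delta v_2^\delta$; it says nothing about its H\"older seminorm being of order $\norm{M'-M}+|x'-x|$. Indeed $[\delta v_1^\delta - \delta v_2^\delta]_{C^\alpha} = \delta[\tilde v^\delta]_{C^\alpha}$, and a priori you only know this is bounded by a constant $C_L$, not by the smallness parameter. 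So the Schauder estimate as you set it up does not close.

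The fix is the one the paper uses (and which is implicit in the reference to Lemmas~\ref{lem:key-l-d} and \ref{lem:key-l-osc}): keep $-\delta$ as the zeroth-order coefficient of the linear operator rather than moving $\delta v^\delta$ to the right-hand side. Writing $\tilde v^\delta = v^\delta - v^\delta(0)$ with $v^\delta = v_1^\delta - v_2^\delta$, the equation becomes
\[
a_{ij}D_{ij}\tilde v^\delta - \delta\tilde v^\delta = \delta v^\delta(0) - a_{ij}(M'_{ij}-M_{ij}) - b_k(x'_k-x_k),
\]
whose right-hand side is the sum of a constant bounded by $|\delta v^\delta(0)|\leq C_L(\norm{M'-M}+|x'-x|)$ (from \eqref{eq:lip-nl-dwd}) and terms with $C^\alpha$-norm $\leq C_L(\norm{M'-M}+|x'-x|)$ (from your coefficient estimate). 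Since the Schauder constant for $a_{ij}D_{ij}-\delta$ is uniform over $\delta\in(0,1)$ (the coefficient $c=-\delta$ is bounded), the estimate now closes directly using \eqref{eq:lip-nl-twd} for the $L^\infty$ term.
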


\begin{proof} The main idea has been already introduced in the proof of Lemma \ref{lem:prop:nl-lip}. We only need to obtain a uniform $C^{0,\alpha}(\R^n)$-estimate on the linearized coefficients $a_{ij}^\delta$ and $b_k^\delta$; recall all the notations used in Lemma \ref{lem:prop:nl-lip}. Here we only present the proof for $a_{ij}^\delta$, since that of $b_k^\delta$ follows the same argument.

By the estimate \eqref{eq:key-nl-d-D-wd}, we have that for any $t\in[0,1]$, $\norm{N_t^\delta}_{L^\infty(\R^n)}\leq C_L+L$. Hence, we deduce from the condition (F3) that $
\norm{a_{ij}^\delta}_{L^\infty(\R^n)}\leq\sigma(C_L+L+1)$. Again by \eqref{eq:key-nl-d-D-wd}, for any $y_1,y_2\in Q_1$, $\norm{N_t^\delta(y_1)-N_t^\delta(y_2)}\leq C_L|y_1-y_2|^\alpha$. Thus, the periodicity of $a_{ij}^\delta$ yields that $[a_{ij}^\delta]_{C^{0,\alpha}(\R^n)}\leq\tilde{C}_L$, where $\tilde{C}_L=\sigma(C_L+L+1)(C_L+1)$. Summing up we get that $\norm{a_{ij}^\delta}_{C^{0,\alpha}(\R^n)}\leq 2\tilde{C}_L$.

It is also easy to see that $\norm{\delta v^\delta}_{C^{0,\alpha}(\R^n)}\leq 6c_0\sigma(1+L)$. Therefore, we may apply the interior Schauder estimate to \eqref{eq:key-nl-d-lin} in a ball $B_{\sqrt{n}}$ containing a periodic cube to get the conclusion, as in Lemma \ref{lem:key-l-d} and \ref{lem:key-l-osc}.
\end{proof}

As a corollary, we obtain the same Lipschitz continuity of $w(y;\cdot,\cdot)$ in $(M,x)$-variable which is uniform in the $C^{2,\alpha}(\R^n)$-norm.

\begin{lem}\label{lem:prop:nl-lip-C2a-h} For each $L>0$ and $(M,x),(M',x')\in\ov{B}_L\times\ov{\Omega}$, there holds
\begin{equation}\label{eq:lip-nl-C2a-w}
\norm{w(\cdot;M',x')-w(\cdot;M,x)}_{C^{2,\alpha}(\mbb{R}^n)}\leq C_L(\norm{M'-M}+|x'-x|).
\end{equation}
\end{lem}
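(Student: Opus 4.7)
The plan is to pass to the limit $\delta \to 0$ in the uniform estimate of Lemma \ref{lem:prop:nl-lip-C2a}, leveraging the $C^2$-convergence $\tilde{w}^\delta \to w$ obtained in Lemma \ref{lem:prop:nl-uconv} and the lower semicontinuity of the $C^{2,\alpha}$ norm with respect to that convergence.

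More precisely, fix $L>0$ and $(M,x),(M',x')\in\ov{B}_L\times\ov{\Omega}$. I would set
\begin{equation*}
v^\delta := \tilde{w}^\delta(\cdot;M',x')-\tilde{w}^\delta(\cdot;M,x), \qquad v := w(\cdot;M',x')-w(\cdot;M,x),
\end{equation*}
and observe two things. First, by Lemma \ref{lem:prop:nl-lip-C2a} (specifically \eqref{eq:lip-nl-C2a-twd}), the family $\{v^\delta\}_{0<\delta<1}$ satisfies the uniform $C^{2,\alpha}$ bound
\begin{equation*}
\sup_{0<\delta<1}\norm{v^\delta}_{C^{2,\alpha}(\mbb{R}^n)}\leq C_L(\norm{M'-M}+|x'-x|).
\end{equation*}
Second, by the uniform convergence \eqref{eq:prop:nl-uconv} applied at each of the two points $(M,x)$ and $(M',x')$, I get $\tilde{w}^\delta(\cdot;M,x)\to w(\cdot;M,x)$ and $\tilde{w}^\delta(\cdot;M',x')\to w(\cdot;M',x')$ in $C^2(\mbb{R}^n)$, so $v^\delta\to v$ in $C^2(\mbb{R}^n)$ as $\delta\to 0$.

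The final step is to note that the $C^{2,\alpha}$ Hölder seminorm is lower semicontinuous under pointwise convergence of the second derivatives, since for any fixed $y\neq z$ the map $v\mapsto |D^2v(y)-D^2v(z)|/|y-z|^\alpha$ is continuous with respect to $C^2$-convergence and $[D^2v]_{C^\alpha(\mbb{R}^n)}$ is the supremum of such continuous functionals. Combined with the $L^\infty$ and $C^2$ convergences for the lower order terms, this yields
\begin{equation*}
\norm{v}_{C^{2,\alpha}(\mbb{R}^n)}\leq \liminf_{\delta\to 0}\norm{v^\delta}_{C^{2,\alpha}(\mbb{R}^n)}\leq C_L(\norm{M'-M}+|x'-x|),
\end{equation*}
which is exactly \eqref{eq:lip-nl-C2a-w}.

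There is no real obstacle here: given Lemmas \ref{lem:prop:nl-lip-C2a} and \ref{lem:prop:nl-uconv}, the argument is a standard stability/lower-semicontinuity passage. The only subtle point is making sure that we can pass to the limit in the Hölder seminorm (and not merely in $C^2$), which is why the lower semicontinuity observation above is essential; alternatively, one could invoke the Arzelà–Ascoli theorem on $D^2v^\delta$ to extract a subsequence converging in $C^\alpha$ and identify the limit with $D^2v$ by uniqueness, arriving at the same conclusion.
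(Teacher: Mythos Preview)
Your proof is correct and follows essentially the same route as the paper: pass from the uniform $C^{2,\alpha}$ bound on $\tilde{w}^\delta(\cdot;M',x')-\tilde{w}^\delta(\cdot;M,x)$ (Lemma \ref{lem:prop:nl-lip-C2a}) to the limit $w(\cdot;M',x')-w(\cdot;M,x)$ via the $C^2$-convergence of Lemma \ref{lem:prop:nl-uconv}, together with the lower semicontinuity of the $C^{2,\alpha}$ norm under $C^2$-convergence. The paper phrases this last step as ``the compactness argument (Theorem \ref{thm:ce-apndx})'', which is precisely the lower semicontinuity statement you spell out explicitly.
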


\begin{proof} Apply the uniform convergence (Lemma \ref{lem:prop:nl-uconv}) to get 
\begin{equation*}
\norm{w(\cdot;M',x')-w(\cdot;M,x)}_{C^2(\mbb{R}^n)}\leq C_L(\norm{M'-M}+|x'-x|).
\end{equation*}
Then use the uniform boundedness of $C^{2,\alpha}(\R^n)$-norm of $w(\cdot;M',x')-w(\cdot;M,x)$ (Lemma \ref{lem:key-nl}) and the compactness embedding to improve this inequality to $C^{2,\alpha}(\R^n)$-norm. 
\end{proof}

In the subsequent two lemmas, we show that $\ov{F}$ and $w(y;\cdot,\cdot)$ are differentiable and further that the partial derivatives are locally Lipschitz continuous on $\mc{S}^n\times\ov{\Omega}$. The former is done by linearizing the equation \eqref{eq:key-nl-re}. In order to get the latter, however, we need to begin our argument from the linearized equation \eqref{eq:key-nl-d-lin}. 

\begin{lem}\label{lem:pd-nl-e} There exist $\ov{F}_{p_{kl}},\ov{F}_{x_k}$, $D_{p_{kl}}w(y;\cdot,\cdot)$ and $D_{x_k}w(y;\cdot,\cdot)$ for each $y\in\R^n$ on $\mc{S}^n\times\ov{\Omega}$. In addition, there hold for any $L>0$ and $(M,x)\in\ov{B}_L\times\ov{\Omega}$,
\begin{equation}\label{eq:pd-nl-bd}
|\ov{F}_{p_{kl}}(M,x)|,|\ov{F}_{x_k}(M,x)|,\norm{D_{p_{kl}}w(\cdot;M,x)}_{C^{2,\alpha}(\R^n)},\norm{D_{x_k}w(\cdot;M,x)}_{C^{2,\alpha}(\R^n)}\leq C_L.
\end{equation}
\end{lem}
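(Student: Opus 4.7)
The plan is to obtain the partial derivatives as limits of difference quotients, using the Lipschitz estimates already established in Lemmas \ref{lem:prop:nl-lip-C2a} and \ref{lem:prop:nl-lip-C2a-h} to get compactness and the uniqueness part of Lemma \ref{lem:key-l} to identify the limit.

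First I would fix $L>0$, $(M,x)\in\ov{B}_L\times\ov{\Omega}$ and indices $k,l$, and for small $h\ne 0$ set
\begin{equation*}
v^h(y):=\frac{w(y;M+hE^{kl},x)-w(y;M,x)}{h},\qquad \mu^h:=\frac{\ov{F}(M+hE^{kl},x)-\ov{F}(M,x)}{h}.
\end{equation*}
Lemma \ref{lem:prop:nl-lip-C2a-h} gives $\norm{v^h}_{C^{2,\alpha}(\R^n)}\leq C_L$ and Lemma \ref{lem:prop:nl-lip-h} gives $|\mu^h|\leq C_L$, both uniformly in $h$ with $|h|\leq 1$. Moreover $v^h(0)=0$ by the normalization $w(0;\cdot,\cdot)=0$. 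Subtracting the two versions of \eqref{eq:key-nl-re} and dividing by $h$, a standard integration in the parameter yields the linearized equation
\begin{equation*}
a_{ij}^h(y)D_{y_iy_j}v^h(y)+a_{kl}^h(y)=\mu^h\quad\text{in }\R^n,
\end{equation*}
where
\begin{equation*}
a_{ij}^h(y)=\int_0^1 F_{p_{ij}}\!\bigl(t\{D_y^2w(\cdot;M+hE^{kl},x)+M+hE^{kl}\}+(1-t)\{D_y^2w(\cdot;M,x)+M\},\,x,\,y\bigr)\,dt.
\end{equation*}

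Next I would show the coefficients $a_{ij}^h$ are $1$-periodic, uniformly elliptic with constants $\lambda,\Lambda$, and satisfy $\norm{a_{ij}^h}_{C^{0,\alpha}(\R^n)}\leq C_L$ independently of $h$. The $L^\infty$ bound follows from (F3) together with the uniform bound $\norm{D_y^2 w(\cdot;M',x)}_{L^\infty(\R^n)}\leq C_L$ from Lemma \ref{lem:key-nl} applied to all arguments in $\ov{B}_L$; the H\"older bound is obtained exactly as in Lemma \ref{lem:prop:nl-lip-C2a}, invoking the $C^{1,1}$ part of (F3) and the $C^{2,\alpha}(\R^n)$-bound on the correctors.

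With these ingredients in hand, the compactness theorem \ref{thm:ce-apndx} lets me extract a sequence $h_n\to 0$ along which $v^{h_n}\to v$ in $C^2(\R^n)$, $\mu^{h_n}\to\mu\in\R$, and $a_{ij}^{h_n}\to a_{ij}$ in $C^{0,\beta}(\R^n)$ for any $\beta<\alpha$. Passing to the limit in the linearized equation gives
\begin{equation*}
a_{ij}(y)D_{y_iy_j}v(y)+a_{kl}(y)=\mu\quad\text{in }\R^n,\qquad v(0)=0,\qquad v\text{ is }1\text{-periodic},
\end{equation*}
where $a_{ij}(y)=F_{p_{ij}}(D_y^2w(y;M,x)+M,x,y)$. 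This is exactly an equation of the form \eqref{eq:key-l}, so Lemma \ref{lem:key-l} asserts that $\mu$ is uniquely determined by $(a_{ij},a_{kl})$ and that $v$ is unique up to an additive constant; the normalization $v(0)=0$ pins it down. Consequently the limit is independent of the extracted subsequence, whence the full limits exist and we may define
\begin{equation*}
\ov{F}_{p_{kl}}(M,x):=\lim_{h\to 0}\mu^h,\qquad D_{p_{kl}}w(\cdot;M,x):=\lim_{h\to 0}v^h,
\end{equation*}
with the latter convergence in $C^2(\R^n)$. The estimates in \eqref{eq:pd-nl-bd} for these derivatives follow by passing to the limit in $|\mu^h|\leq C_L$ and $\norm{v^h}_{C^{2,\alpha}(\R^n)}\leq C_L$, where the $C^{2,\alpha}$-bound on the limit is preserved by lower semicontinuity of the H\"older seminorm. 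The derivatives in the $x_k$ direction are handled identically, replacing $M+hE^{kl}$ by $x+he_k$: the difference-quotient equation now picks up a right-hand side $-b_k^h(y)$ with $b_k^h(y):=\int_0^1 F_{x_k}(\cdots)\,dt$, which again is $1$-periodic and bounded in $C^{0,\alpha}(\R^n)$ uniformly in $h$ by (F3), and the same compactness-plus-uniqueness scheme yields $\ov{F}_{x_k}$ and $D_{x_k}w(\cdot;M,x)$ with the corresponding bounds.

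The main obstacle in this plan is the uniform $C^{0,\alpha}$-control of the coefficients $a_{ij}^h$ (and $b_k^h$) of the linearized equation: without it, the stability step that produces the limiting linearized equation fails, and one cannot invoke Lemma \ref{lem:key-l} to identify $\mu$ and $v$. This is precisely where the $C^{1,1}$ assumption on $F$, together with the $C^{2,\alpha}$-regularity of the correctors from Lemma \ref{lem:key-nl}, is essential, and where the argument closely parallels Lemma \ref{lem:prop:nl-lip-C2a}.
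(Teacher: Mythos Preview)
Your proposal is correct and follows essentially the same route as the paper: form difference quotients of $w$ and $\ov{F}$, derive the linearized equation with integral coefficients $a_{ij}^h$, control these coefficients in $C^{0,\alpha}$ via (F3) and the $C^{2,\alpha}$-bound on correctors, then use compactness together with the uniqueness part of Lemma~\ref{lem:key-l} to identify the limit and upgrade subsequential convergence to full convergence. The only cosmetic difference is that the paper phrases the passage to the limit by invoking ``the proof of Lemma~\ref{lem:key-l}'' (emphasizing the uniform convergence $a_{ij}^h\to a_{ij}$), whereas you extract a convergent subsequence directly from the $C^{2,\alpha}$-bound on $v^h$ and then appeal to uniqueness; both arrive at the same limiting equation \eqref{eq:lin-nl}.
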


\begin{proof} Here we only provide the proof for the $M$-partial derivatives of $\ov{F}$ and $w(y;\cdot,\cdot)$. The argument for the $x$-partial derivatives is similar so we omit it to avoid the redundancy.

Pick any $L>0$ and $(M,x)\in\ov{B}_L\times\ov{\Omega}$. By $v_h$ we denote $h^{-1}[w(\cdot;M+hE^{kl},x)-w(\cdot;M,x)]$. As we linearize the equation \eqref{eq:key-nl-re} with $M+hE^{kl}$ and $M$, and divide the both sides by $h$, we observe that $v_h$ satisfies
\begin{equation}\label{eq:lin-nl-h}
a_{ij,h}D_{ij}v_h+a_{kl,h}=\gamma_h
\end{equation}
where $a_{ij,h}:=\int_0^1F_{p_{ij}}(N_{t,h},x,\cdot)dt$, $\gamma_h:=h^{-1}[\ov{F}(M+hE^{kl},x)-\ov{F}(M,x)]$ and $N_{t,h}:=tD_y^2w(\cdot;M+hE^{kl},x)+(1-t)D_y^2w(\cdot;M,x)+M+thE^{kl}$.

By following the argument in the proof of Lemma \ref{lem:prop:nl-lip-C2a}, we observe that for any $h$ with $|h|$ small, $a_{ij,h}$ is also uniformly elliptic with the ellipticity constants $\lambda$ and $\Lambda$, and belongs to $C^{0,\alpha}(\R^n)$ with $\norm{a_{ij,h}}_{C^{0,\alpha}(\R^n)}\leq c_L$. Also we know from Lemma \ref{lem:prop:nl-lip-h} that $|\gamma_h|\leq\tilde{c}_L$.

Therefore, the linearized equation \eqref{eq:lin-nl-h} belongs to the same class of \eqref{eq:key-l-d}. Even though the coefficients of \eqref{eq:lin-nl-h} vary with respect to the parameter $h$, the proof of Lemma \ref{lem:key-l} is still applicable because we have a uniform convergence of $a_{ij,h}$ as $h\ra 0$; indeed, Lemma \ref{lem:prop:nl-lip-C2a-h} implies that $a_{ij,h}\ra a_{ij}:=F_{p_{ij}}(D_y^2w(\cdot;M,x)+M,x,\cdot)$ uniformly in $\R^n$ as $h\ra 0$. Consequently, there exist a unique constant $\gamma$ and a bounded 1-periodic function $v\in C^{2,\alpha}(\mbb{R}^n)$ such that
\begin{equation*}
|\gamma_h-\gamma|+\norm{v_h-v}_{C^2(\mbb{R}^n)}\lra 0
\end{equation*}
as $h\ra 0$ and that $v$ satisfies
\begin{equation}\label{eq:lin-nl}
a_{ij}D_{ij}v+a_{kl}=\gamma\quad\text{in }\mbb{R}^n.
\end{equation}
By the convergence above, $\gamma=\ov{F}_{p_{kl}}(M,x)$ and $v=D_{p_{kl}}w(\cdot;M,x)$. One should notice that we do not force $v(0)$ to be 0 here; otherwise, we could not say that $v=D_{p_{kl}}w(\cdot;M,x)$. The uniform estimate \eqref{eq:pd-nl-bd} now follows from Lemma \ref{lem:prop:nl-lip-h} and \ref{lem:prop:nl-lip-C2a-h}.
\end{proof}

\begin{lem}\label{lem:pd-nl-lip} $\ov{F}_{p_{kl}}$, $\ov{F}_{x_k}$, $D_{p_{kl}}w(y;\cdot,\cdot)$ and $D_{x_k}w(y;\cdot,\cdot)$ are Lipschitz continuous locally in $\mc{S}^n$ and globally in $\ov{\Omega}$. Moreover, the Lipschitz continuity of the latter two is uniform $y\in\R^n$.  
\end{lem}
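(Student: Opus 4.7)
The plan is to recognize that the linearized cell problem \eqref{eq:lin-nl} characterizing the pair $(\ov{F}_{p_{kl}}(M,x),\, D_{p_{kl}}w(\cdot;M,x))$ is itself of the same type as the nonlinear cell problem \eqref{eq:key-nl-re}, but with the pair $(M,x)$ now playing the role of the slow variable $x$. Having made this identification, one applies (the proof of) Lemma \ref{lem:prop:nl-lip-h} to obtain the desired Lipschitz continuity.

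Concretely, I would introduce the linear operator
\begin{equation*}
G(P,(M,x),y) := a_{ij}(y;M,x)P_{ij} + a_{kl}(y;M,x),\quad a_{ij}(y;M,x) := F_{p_{ij}}(D_y^2 w(y;M,x)+M,x,y),
\end{equation*}
on $\mc{S}^n\times(\ov{B}_L\times\ov{\Omega})\times\R^n$, and verify that $G$ satisfies the structure conditions of Section \ref{sec:review-nl} with $(M,x)$ as the parameter. Periodicity in $y$, uniform ellipticity in $P$ (with the same constants $\lambda,\Lambda$), and concavity in $P$ are immediate; the nontrivial point is the Lipschitz continuity of the coefficients $a_{ij}$ and of the source $a_{kl}$ in $(M,x)$ uniformly in $y$. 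This is where Lemma \ref{lem:prop:nl-lip-C2a-h} is crucial: combined with the $C^{1,1}$-regularity of $F$ from (F3) (here $m\geq 1$ suffices) it yields
\begin{equation*}
\norm{a_{ij}(\cdot;M',x')-a_{ij}(\cdot;M,x)}_{L^\infty(\R^n)}+\norm{a_{kl}(\cdot;M',x')-a_{kl}(\cdot;M,x)}_{L^\infty(\R^n)}\leq C_L(\norm{M'-M}+|x'-x|).
\end{equation*}

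With $G$ in hand, the $\delta$-penalized problem $a_{ij}(y;M,x)D_{ij}v^\delta+a_{kl}(y;M,x)-\delta v^\delta=0$ falls exactly in the class of \eqref{eq:key-l-d}, and the arguments of Lemmas \ref{lem:prop:nl-lip} and \ref{lem:prop:nl-lip-h} apply essentially verbatim (with the linearization step trivialized by the linearity of $G$). This produces Lipschitz continuity in $(M,x)$ of the unique effective constant and of the normalized corrector of the cell problem associated with $G$, uniformly in $y\in\R^n$ for the latter. By uniqueness (Lemma \ref{lem:key-nl} applied to $G$), these two objects coincide with $\ov{F}_{p_{kl}}(M,x)$ and $D_{p_{kl}}w(\cdot;M,x)$ respectively; the identification of the corrector requires only checking that $D_{p_{kl}}w(0;M,x)=0$, which follows by differentiating the identity $w(0;M,x)\equiv 0$ via the uniform $C^2$-convergence of the difference quotient established in the proof of Lemma \ref{lem:pd-nl-e}. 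The case of $\ov{F}_{x_k}$ and $D_{x_k}w(\cdot;M,x)$ is handled identically, with $F_{x_k}$ replacing $F_{p_{kl}}$ as the source in the definition of $G$.

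The main obstacle is the Lipschitz bound on $a_{ij}(\cdot;M,x)$: because these coefficients depend implicitly on $(M,x)$ through the solution $D_y^2 w$ of the nonlinear cell problem \eqref{eq:key-nl-re}, the $C^{1,1}$-regularity of $F$ alone does not suffice, and one genuinely needs the $C^{2,\alpha}$-Lipschitz estimate provided by Lemma \ref{lem:prop:nl-lip-C2a-h}. Once this is in place, the entire statement reduces to the linear homogenization theory already developed.
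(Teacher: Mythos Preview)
Your proposal is correct and reaches the same conclusion, but via a somewhat different route from the paper. The paper returns to the penalized nonlinear problem and builds the $\delta$-approximation of \eqref{eq:lin-nl} with coefficients $a_{ij}^\delta(y;M,x)=F_{p_{ij}}(D_y^2w^\delta(y;M,x)+M,x,y)$, obtaining its solution $v^\delta$ as the $h\to 0$ limit of the difference quotients $v_h^\delta$; it then has to verify both that $a_{ij}^\delta\to a_{ij}$ uniformly in $(y,M,x)$ and that $a_{ij}^\delta(y;\cdot,\cdot)$ is Lipschitz, for which it invokes \eqref{eq:lip-nl-C2a-dwd} at the $\delta$-level. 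You instead work directly with the limiting coefficients $a_{ij}(y;M,x)=F_{p_{ij}}(D_y^2w(y;M,x)+M,x,y)$ and penalize the already-linear cell problem, relying on Lemma \ref{lem:prop:nl-lip-C2a-h} for the Lipschitz dependence of the coefficients. Your path is cleaner in that it bypasses the double limit in $(h,\delta)$, but it needs one ingredient you leave implicit: in the comparison step for $\delta v^\delta$ (the analog of \eqref{eq:lip-nl-dwd}), the coefficients themselves differ between $(M,x)$ and $(M',x')$, so the error term contains $[a_{ij}(\cdot;M',x')-a_{ij}(\cdot;M,x)]D_{ij}v^\delta$, and one must invoke the uniform-in-$\delta$ bound $\norm{D_y^2v^\delta}_{L^\infty(\R^n)}\leq C_L$ coming from the analog of \eqref{eq:key-l-d-C2a-twd}. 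The paper's choice of $a_{ij}^\delta$ ties the argument more directly to the existence proof in Lemma \ref{lem:pd-nl-e} and to the iteration for Proposition \ref{prop:reg-nl}; your abstraction through $G$ achieves the same end with less bookkeeping.
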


\begin{proof} Here we only present the proof for the $M$-partial derivatives. The proof for the $x$-partial derivatives is the same, and we leave it to the readers. 

Substituting $M'$ [resp., $x'$] with $M+hE^{kl}$ [resp., $x$] in the equation \eqref{eq:key-nl-d-lin} and dividing by $h$ the both sides, one obtains
\begin{equation}\label{eq:lin-nl-d-h}
a_{ij,h}^\delta D_{ij}v_h^\delta+a_{kl,h}^\delta-\delta v_h^\delta=0\quad\text{in }\R^n,
\end{equation}
where $a_{ij,h}^\delta:=\int_0^1F_{p_{ij}}(N_{t,h}^\delta,x,\cdot)dt$, $v_h^\delta:=h^{-1}[w^\delta(\cdot;M+hE^{kl},x)-w^\delta(\cdot;M,x)]$ and $N_{t,h}^\delta:=tD_y^2w^\delta(\cdot;M+hE^{kl},x)+(1-t)D_y^2w^\delta(\cdot;M,x)+M+thE^{kl}$. 

By Lemma \ref{lem:prop:nl-lip-C2a}, we have $\norm{v_h^\delta}_{C^{2,\alpha}(\R^n)}\leq C_L$ for any $0<|h|<1$ and $\delta>0$. Then the Arzela-Ascoli theorem yields that for each $\delta>0$, there is a bounded 1-periodic $v^\delta\in C^{2,\alpha}(\R^n)$ such that $v_h^\delta\ra v^\delta$ in $C^2(\R^n)$ along a subsequence of $h$. Moreover, this lemma implies that $a_{ij,h}^\delta\ra a_{ij}^\delta:=F_{p_{ij}}(D_y^2w^\delta(\cdot;M,x)+M,x,\cdot)$ uniformly in $\R^n$ as $h\ra 0$. Since $a_{ij}^\delta$ is also uniformly elliptic with the same ellipticity constants $\lambda$ and $\Lambda$, the stability of the viscosity solutions (c.f. the proof of Lemma \ref{lem:key-l}) then ensures that the limit function $v^\delta$ solves
\begin{equation}\label{eq:lin-nl-d}
a_{ij}^\delta D_{ij}v^\delta+a_{kl}^\delta-\delta v^\delta=0\quad\text{in }\R^n.
\end{equation}
Due to the uniqueness of the solution of \eqref{eq:lin-nl-d} (c.f. Lemma \ref{lem:key-l-d}), we now know that $v_h^\delta\ra v^\delta$ in $C^2(\R^n)$ as $h\ra 0$; i.e., the convergence is valid for the full sequence of $h$.
  
From now on we write $a_{ij}^\delta=a_{ij}^\delta(\cdot;M,x)$ [resp., $v^\delta=v^\delta(\cdot;M,x)$] to specify the dependency on $(M,x)$. We claim that the equation \eqref{eq:lin-nl-d} is a $\delta$-penalization of the equation \eqref{eq:lin-nl}; i.e., the limit of the normalized function $\tilde{v}^\delta(\cdot;M,x):=v^\delta(\cdot;M,x)-v^\delta(0;M,x)$ solves the equation \eqref{eq:lin-nl}. It is enough to prove that $a_{ij}^\delta(\cdot;M,x)\ra a_{ij}(\cdot;M,x)=F_{p_{ij}}(D_y^2w(\cdot;M,x)+M,x,\cdot)$ uniformly in $\R^n$ as $\delta\ra 0$, since then the rest of the proof follows the lines of Lemma \ref{lem:key-nl}. However, by Lemma \ref{lem:prop:nl-uconv} and \ref{lem:prop:nl-lip-C2a}, we have 
\begin{equation*}
\lim_{(\delta,h)\ra(0+,0)}\sup_{(M,x)\in\ov{B}_L\times\ov{\Omega}}\norm{a_{ij,h}^\delta(\cdot;M,x)-a_{ij}(\cdot;M,x)}_{L^\infty(\R^n)}=0,
\end{equation*}
which gives the desired convergence. 

Next we claim that for each $L>0$, $a_{ij}^\delta(y;\cdot,\cdot)$ is Lipschitz continuous in $\ov{B}_L\times\ov{\Omega}$ uniformly for $y\in\R^n$ and $\delta\in(0,1)$. If so, then we arrive at our conclusion by applying Lemma \ref{lem:prop:nl-lip}, since the equations \eqref{eq:lin-nl-d} and \eqref{eq:key-nl-d} are in the same class. 

To see this, choose any $L>0$ and $(N,z),(N',z')\in\ov{B}_L\times\ov{\Omega}$. According to \eqref{eq:key-nl-d-D-wd}, the $C^{2,\alpha}(\R^n)$-norm of both $w^\delta(\cdot;N,z)$ and $w^\delta(\cdot;N',z')$ is uniformly bounded by $C_L$. Thus, the structure condition (F3) together with \eqref{eq:lip-nl-C2a-dwd} yields that
\begin{equation*}
\begin{split}
\norm{a_{ij}^\delta(\cdot;N,z)-a_{ij}^\delta(\cdot;N',z')}_{L^\infty(\R^n)}&\leq\tilde{C}_L(\norm{N-N'}+|z-z'|),
\end{split}
\end{equation*}
where $\tilde{C}_L=C_L\sigma(1+C_L)$, which proves the claim.
\end{proof}

\begin{rem} Note that the limit of the normalized function $\tilde{v}^\delta(\cdot;M,x)$ may not be equal to $D_{p_{kl}}w(\cdot;M,x)$, since we cannot assure that $D_{p_{kl}}w(0;M,x)=0$. In fact, those two functions differ by an additive constant. It is the main reason why we do not use the $\delta$-penalization argument to derive Lemma \ref{lem:pd-nl-e}, although the proofs are essentially the same. 
\end{rem}

We are now in position to present the proof of our main proposition of this subsection.

\begin{proof}[Proof of Proposition \ref{prop:reg-nl}] Observe from Lemma \ref{lem:pd-nl-lip} the first order partial derivatives of $\ov{F}$ and $w(y;\cdot,\cdot)$ satisfies the equations (e.g., \eqref{eq:lin-nl}) which belong to the same class of \eqref{eq:key-nl}, and admit the $\delta$-approximating problems (e.g., \eqref{eq:lin-nl-d}) which correspond to \eqref{eq:key-nl-d}. Thus, we can repeat the argument used through Lemma \ref{lem:prop:nl-lip-C2a}-\ref{lem:pd-nl-lip} again to get the Lipschitz continuity of the second order partial derivatives of $\ov{F}$ and $w(y;\cdot,\cdot)$. We iterate this process by $m$-times to reach the conclusion. We leave the details to the readers.
\end{proof}

\subsection{Interior and boundary layer correctors}\label{sec:int/bdry cor&rate-nl}

Now we are in position to construct higher order correctors which correct the error occurring in the interior and on the boundary layer of our physical domain $\Omega$. This subsection involves many iterative arguments, so before we make our argument rigorous, we would like to provide the key idea.

First and foremost, we emphasize that the asymptotic expansion of $u^\e$ occurs inside of the operator $F$, which differs from the linear case. That is, if $\eta_r^\e:=u+\sum_{k=1}^r\e^kw_k(\e^{-1}x,x)$ is our expansion, then after a computation we get
\begin{equation*}
\begin{split}
F\left(D^2\eta_r^\e,x,\frac{x}{\e}\right)=F\left(X^0+\e Y^r,\frac{x}{\e},x\right)
\end{split}
\end{equation*}
where
\begin{equation}\label{eq:Xk-nl}
X^k=\begin{cases}
D_x^2u(\cdot)+D_y^2w_2(\cdot/\e,\cdot)&\text{if }k=0\\
D_x^2w_k(\cdot/\e,\cdot)+D_{x,y}w_{k+1}(\cdot/\e,\cdot)+D_y^2w_{k+2}(\cdot/\e,\cdot)&\text{if }1\leq k\leq r-2,\\
D_x^2w_{r-1}(\cdot/\e,\cdot)+D_{x,y}w_r(\cdot/\e,\cdot)&\text{if }k=r-1,\\
D_x^2w_r(\cdot/\e,\cdot)&\text{if }k=r,
\end{cases}
\end{equation}
and $Y^r$ defined by
\begin{equation}\label{eq:Yr-nl}
Y^r=X^1+\e X^2+\cdots+\e^{r-1}X^r. 
\end{equation}
Here we have denoted $D_xD_y+D_yD_x$ by $D_{x,y}$. To further simplify our notation, let us drop the dependency of $(\e^{-1}x,x)$. Then a Taylor expansion of $F$ with respect to the Hessian gives, 
\begin{equation*}
\begin{split}
F(X^0+\e Y^r)&=F(X^0)+\e F_{p_{ij}}(X^0)Y_{ij}^r+\cdots+\frac{\e^r}{r!}F_{p_{i_1j_1}\ldots p_{i_rj_r}}(X^0)Y_{i_1j_1}^r\cdots Y_{i_rj_r}^r\\
&\quad+O(\e^{r+1}),
\end{split}
\end{equation*}
which would be valid provided that $\norm{Y^r}_{L^\infty(\Omega)}\leq C$ with a positive constant independent of $\e$. This in turn requires us to have a uniform control (i.e., independent of $\e$) on the supremum norm of second order derivatives of $w_k$ in both $x$ and $y$-variables.

Moreover, one should note that $Y^r=\sum_{k=1}^r\e^{k-1}X^k$ is a summation of the terms of different $\e$-order. For this reason we rearrange the terms in the Taylor expansion according to the $\e$-power as below. 
\begin{equation}\label{eq:te-nl}
\begin{split}
F(X^0+\e Y^r)&=F(X^0)+\e F_{p_{ij}}(X^0)X_{ij}^1+\cdots\\
&\quad+\e^r\sum_{l=1}^r\frac{1}{l!}\sum_{n_1+\cdots+n_l=r}F_{p_{i_1j_1}\ldots p_{i_lj_l}}(X^0)X_{i_1j_1}^{n_1}\cdots X_{i_lj_l}^{n_l}\\
&\quad+\sum_{l=1}^r\sum_{r+1\leq n_1+\cdots+n_l\leq rl}\frac{\e^{n_1+\cdots+n_l}}{l!}F_{p_{i_1j_1}\ldots p_{i_lj_l}}(X^0)X_{i_1j_1}^{n_1}\cdots X_{i_lj_l}^{n_l}\\
&\quad+O(\e^{r+1}).
\end{split}
\end{equation}
It suggests us to find $w_1,\ldots,w_r$ in such a way that $F(X^0)=0$, $F_{p_{ij}}(X^0)X_{ij}^1=0$, and so on. 

To satisfy $F(X^0)=0$, $w_2$ must be chosen such that $D_y^2w_2=D_y^2w(\cdot;D_x^2u,x)$. Then $F(X^0)=\ov{F}(D_x^2u)=0$ by Lemma \ref{lem:main-nl-h}. Furthermore, one should obtain, for $k=1,\ldots,r-2$, 
\begin{equation}\label{eq:int cor-nl-re}
\begin{split}
0&=\sum_{l=1}^k\frac{1}{l!}\sum_{n_1+\cdots+n_l=k}F_{p_{i_1j_1}\ldots p_{i_lj_l}}(X^0)X_{i_1j_1}^{n_1}\cdots X_{i_lj_l}^{n_l}\\
&=F_{p_{ij}}(X^0)X_{ij}^k+\sum_{l=2}^k\frac{1}{l!}\sum_{n_1+\cdots+n_l=k}F_{p_{i_1j_1}\ldots p_{i_lj_l}}(X^0)X_{i_1j_1}^{n_1}\cdots X_{i_lj_l}^{n_l}\\
&=F_{p_{ij}}(X^0)D_{y_iy_j}w_{k+2}+\Phi_{k+2},
\end{split}
\end{equation}
which yields the equation for $w_k$, where 
\begin{equation*}
\begin{split}
\Phi_{k+2}&=F_{p_{ij}}(X^0)D_{x_ix_j}w_k+2F_{p_{ij}}(X^0)D_{x_iy_j}w_{k+1}\\
&\quad+\sum_{l=2}^k\frac{1}{l!}\sum_{n_1+\cdots+n_l=k}F_{p_{i_1j_1}\ldots p_{i_lj_l}}(X^0)X_{i_1j_1}^{n_1}\cdots X_{i_lj_l}^{n_l}.
\end{split}
\end{equation*}
Notice that the summation on the right hand side involves $X^l$ for $l\leq k-1$ only; in other words, the term $\Phi_{k+2}$ has nothing to do with the functions $w_r$ with $r\geq k+2$. Thus, we are able to obtain $w_{k+2}$ by solving the equation \eqref{eq:int cor-nl-re} as long as $\Phi_{k+2}$ satisfies certain inductive hypotheses. On the other hand, since $w_{k+2}$ makes the $\e^k$-th order term in \eqref{eq:te-nl} to vanish, there is no opportunity to kill the $\e^{r-1}$ and $\e^r$-th order terms; recall that the same situation has happened in the linear setting. This in turn suggests that we can have at most 
\begin{equation*}
F(X^0+\e Y^r)=O(\e^{r-1}),
\end{equation*}
which would lead us to $O(\e^{r-1})$-rate of convergence (Theorem \ref{thm:main-nl}). Finally we make a remark that as in the linear case, we would come up with the compatibility condition of $w_{k+2}$, which determines uniquely $w_k$. Unlike the linear case (Lemma \ref{lem:chi-l}), however, this relationship is more hidden in the induction argument. We will discuss this issue in the proof in more detail.

Now we make our argument rigorous. Throughout this subsection we set $m\geq 2$. First we enhance the regularity of $u$, since now we have $\ov{F}\in C^{m,1}$. 

\begin{lem}\label{lem:u-nl-Cma} Assume that $F$ verifies the hypotheses (F1)-(F4). Then $u\in C^{m+2,\alpha}(\ov{\Omega})$ and 
\begin{equation*}
\norm{u}_{C^{m+2,\alpha}(\ov{\Omega})}\leq C_{m,g,\Omega}.
\end{equation*}
\end{lem}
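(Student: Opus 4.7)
The plan is to bootstrap the regularity of $u$ from the base case $u\in C^{2,\alpha}(\ov{\Omega})$, provided by Lemma \ref{lem:main-nl-h}, up to $C^{m+2,\alpha}(\ov{\Omega})$, by differentiating the effective equation $\ov{F}(D^2u,x)=0$ and repeatedly applying linear Schauder theory. The key structural inputs that make this work are the uniform ellipticity and concavity of $\ov{F}$ (Lemma \ref{lem:prop:nl}) together with the $C^{m,1}$-regularity of $\ov{F}$ in $(M,x)$ guaranteed by Proposition \ref{prop:reg-nl}.

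The induction step is as follows. Assume $u\in C^{k+2,\alpha}(\ov{\Omega})$ with a quantitative bound for some $0\leq k\leq m-1$. Differentiating the equation in $x_l$ yields the linear elliptic problem
\begin{equation*}
a_{ij}(x)D_{ij}v+h_l(x)=0\quad\text{in }\Omega,
\end{equation*}
where $v=D_{x_l}u$, $a_{ij}(x)=\ov{F}_{p_{ij}}(D^2u(x),x)$, and $h_l(x)=\ov{F}_{x_l}(D^2u(x),x)$. Since $\ov{F}_{p_{ij}},\ov{F}_{x_l}\in C^{m-1,1}$ in $(M,x)$ and $D^2u\in C^{k,\alpha}(\ov{\Omega})$ with $k\leq m-1$, the coefficient matrix $(a_{ij})$ and the source $h_l$ both lie in $C^{k,\alpha}(\ov{\Omega})$, while $(a_{ij})$ is uniformly elliptic with constants $\lambda,\Lambda$ by Lemma \ref{lem:prop:nl} (i). A global linear Schauder estimate (Proposition \ref{prop:l-apndx} (e)) applied to this problem, with the boundary data discussed below, yields $v\in C^{k+2,\alpha}(\ov{\Omega})$, and hence $u\in C^{k+3,\alpha}(\ov{\Omega})$. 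Iterating from $k=0$ up to $k=m-1$ produces the desired $u\in C^{m+2,\alpha}(\ov{\Omega})$.

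The main obstacle is the boundary handling, because the values of $D_{x_l}u$ along $\p\Omega$ are not directly prescribed by $u=g$. The standard way around this, which I would follow, is to flatten $\p\Omega$ locally via a $C^{m+2,1}$-diffeomorphism (available since $\p\Omega\in C^{m+2,1}$), apply the bootstrap above to tangential derivatives $D_\tau u$, whose boundary data $D_\tau g$ are of regularity $C^{m+1,1}\supset C^{k+1,\alpha}$, and then recover the normal-normal component $D_{nn}u$ algebraically from the equation $\ov{F}(D^2u,x)=0$, using that uniform ellipticity of $\ov{F}$ forces $\ov{F}_{p_{nn}}\geq\lambda>0$, so the implicit function theorem applies. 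The quantitative bound $\norm{u}_{C^{m+2,\alpha}(\ov{\Omega})}\leq C_{m,g,\Omega}$ then follows by tracking Schauder constants through each of the $m$ bootstrap steps, each one picking up a factor depending only on $m$, the structure constants of $\ov{F}$ inherited from $F$, the geometry of $\Omega$, and $\norm{g}_{C^{m+2,1}(\ov{\Omega})}$.
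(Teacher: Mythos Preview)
Your proposal is correct and follows the same line as the paper, which simply invokes Proposition \ref{prop:reg-nl} to get $\ov{F}\in C^{m,1}$ and then cites the black-box regularity result Theorem \ref{thm:reg-nl-apndx} (f) to conclude $u\in C^{m+2,\alpha}(\ov{\Omega})$. Your bootstrap argument---differentiating the equation, applying linear Schauder theory (Proposition \ref{prop:l-apndx}), handling the boundary via tangential derivatives and recovering the normal-normal component from the equation---is precisely the standard proof of that cited theorem, so you have unpacked what the paper leaves as a reference.
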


\begin{proof} By Proposition \ref{prop:reg-nl} we know that $F$ is $C^{1,1}$ locally in $\mc{S}^n$ and globally in $\ov{\Omega}$. Since $u$ solves \eqref{eq:nl-h} where $g\in C^{m+2,1}(\ov{\Omega})$ and $\p\Omega\in C^{m+2,1}$, the regularity theory (Theorem \ref{thm:reg-nl-apndx} (f)) implies that $u\in C^{m+2,\alpha}(\ov{\Omega})$ and
\begin{equation*}
\norm{u}_{C^{m+2,\alpha}(\ov{\Omega})}\leq C_{\ov{F},\Omega}(\norm{u}_{L^\infty(\Omega)}+\norm{g}_{C^{m+2,1}(\ov{\Omega}}),
\end{equation*}
where $C_{\ov{F},\Omega}$ is a constant depending only on the derivatives of $\ov{F}$ up to $m$-th order, and on $\Omega$. By \eqref{eq:reg-nl}, $C_{\ov{F},\Omega}$ in turn depends only on the constants appearing in the structure conditions (F1)-(F4) and $m$. By an a priori estimate, on the other hand, we may bound the supremum norm of $u$ by a constant depending only on $\lambda,\Lambda,\Omega$ and $\norm{g}_{L^\infty(\Omega)}$. It completes the proof.
\end{proof}

Next we construct the interior higher order correctors. The regularity theory established in Subsection \ref{sec:reg-nl} now plays an essential role in proving the existence of the correctors and obtaining a uniform control on $L^\infty$-bound of their second order derivatives.

\begin{lem}\label{lem:int cor-nl}
Suppose $m\geq 2$. Then there exist a family of non-trivial 1-periodic functions $\{w_k:\mbb{R}^n\times\ov{\Omega}\ra\mbb{R}\}_{1\leq k\leq[\frac{m}{2}]+1}$ for which the following holds.
\begin{enumerate}[(i)]
\item $w_k(\cdot,x)\in C^{2,\alpha}(\R^n)$ uniformly for all $x\in\ov{\Omega}$ and $\norm{w_k(\cdot,x)}_{C^{2,\alpha}(\R^n)}\leq C_{m,k,g,\Omega}$.
\item $w_k(y,\cdot)\in C^{m-2k+2,1}(\ov{\Omega})$ uniformly for all $y\in\R^n$ and $\norm{w_k(y,\cdot)}_{C^{m-2k+2,1}(\ov{\Omega})}\leq C_{m,k,g,\Omega}$. Moreover, there holds for any $x_1,x_2\in\ov{\Omega}$ that
\begin{equation*}\label{eq:int cor-nl-x-C2a}
\begin{split}
\sum_{l=0}^{m-2k+1}&\norm{D_x^lw_k(\cdot,x_1)-D_x^lw_k(\cdot,x_2)}_{C^{2,\alpha}(\mbb{R}^n)}\leq C_{m,k,g,\Omega}|x_1-x_2|.
\end{split}
\end{equation*}
\item Provided that $k\geq 3$, for each $x\in\ov{\Omega}$, $w_k(\cdot,x)$ solves
\begin{equation}\label{eq:int cor-nl}
a_{ij}(\cdot,x)D_{y_iy_j}w_k(\cdot,x)+\Phi_k(\cdot,x)=0\quad\text{in }\mbb{R}^n,
\end{equation}
where
\begin{equation*}
\begin{split}
&\Phi_k=a_{ij}D_{x_ix_j}w_{k-2}+2a_{ij}D_{x_iy_j}w_{k-1}+\sum_{l=2}^{k-2}\frac{1}{l!}\sum_{n_1+\cdots+n_l=k-2}a_{i_1j_1\ldots i_lj_l}X_{i_1j_1}^{n_1}\cdots X_{i_lj_l}^{n_l},\\
&X_{i_rj_r}^{n_r}=D_{x_{i_r}x_{j_r}}w_{n_r}+2D_{x_{i_r}y_{j_r}}w_{n_r+1}+D_{y_{i_r}y_{j_r}}w_{n_r+2},\quad r=1,\ldots,l,\\
&a_{i_1j_1\ldots i_lj_l}=F_{p_{i_1j_1}\ldots p_{i_lj_l}}(D_x^2u+D_y^2w(\cdot;D_x^2u,\cdot),\cdot,\cdot),\quad l=1,\ldots,k-2.
\end{split}
\end{equation*}
\end{enumerate}
\end{lem} 

\begin{proof} We are going to use an induction argument to construct the desired family $\{w_k\}_{1\leq k\leq[\frac{m}{2}]+1}$ as well as families of functions $\{\psi_k:\ov{\Omega}\ra\mbb{R}\}_{-1\leq k\leq[\frac{m}{2}]+1}$ and $\{\phi_k:\mbb{R}^n\times\ov{\Omega}\ra\mbb{R}\}_{1\leq k\leq[\frac{m}{2}]+1}$, which verify the following conditions:
\begin{enumerate}
\item[(IP1)] $\phi_k(\cdot,x)\in C^{2,\alpha}(\R^n)$ uniformly for all $x\in\ov{\Omega}$ and $\norm{\phi_k(\cdot,x)}_{C^{2,\alpha}(\R^n)}\leq C_{m,k,g,\Omega}$.
\item[(IP2)] $\phi_k(y,\cdot)\in C^{m-2k+4,1}(\ov{\Omega})$ uniformly for $y\in\R^n$ and $\norm{\phi_k(y,\cdot)}_{C^{m-2k+4,1}(\ov{\Omega})}\leq\tilde{C}_{m,k,g,\Omega}$. Moreover, $\phi_k(0,\cdot)=0$ in $\ov{\Omega}$ and there holds for any $x_1,x_2\in\ov{\Omega}$ that
\begin{equation*}
\sum_{l=0}^{m-2k+3}\norm{D_x^l\phi_k(\cdot,x_1)-D_x^l\phi_k(\cdot,x_2)}_{C^{2,\alpha}(\R^n)}\leq \tilde{C}_{m,k,g,\Omega}|x_1-x_2|.
\end{equation*} 
\item[(IP3)] $\psi_k\in C^{m-2k+2,1}(\ov{\Omega})$ satisfying $\norm{\psi_k}_{C^{m-2k+2,1}(\ov{\Omega})}\leq\bar{C}_{m,k,g,\Omega}$.
\end{enumerate}
It will turn out at the end that as we define 
\begin{equation}\label{eq:int cor-nl-defn:wk}
w_k(y,x)=\phi_k(y,x)+\chi^{ij}(y,x)D_{x_ix_j}\psi_{k-2}(x)+\psi_k(x),
\end{equation}
where $\chi^{ij}(y,x):=D_{p_{ij}}w(y;D_x^2u,x)$, $\{w_k\}_{1\leq k\leq[\frac{m}{2}]+1}$ satisfies Lemma \ref{lem:int cor-nl}. 

Let us make a few remarks on the function $\chi^{ij}(y,x)$, which has the particular importance in this proof. First we observe from Proposition \ref{prop:reg-nl} and Lemma \ref{lem:u-nl-Cma} that $\chi^{ij}(\cdot,x)\in C^{2,\alpha}(\R^n)$ for all $x\in\ov{\Omega}$ and $\norm{\chi^{ij}(\cdot,x)}_{C^{2,\alpha}(\R^n)}\leq C_{m,g,\Omega}^{(1)}$. In addition, $\chi^{ij}(y,\cdot)\in C^{m-1,1}(\ov{\Omega})$ uniformly for $y\in\R^n$ and $\norm{\chi^{ij}(y,\cdot)}_{C^{m-1,1}(\ov{\Omega})}\leq C_{m,g,\Omega}^{(2)}$, and, in particular for $x_1,x_2\in\ov{\Omega}$, there holds
\begin{equation*}\label{eq:chi-nl-x-C2a}
\sum_{l=0}^{m-2}\norm{D_x^l\chi^{ij}(\cdot,x_1)-D_x^l\chi^{ij}(\cdot,x_2)}_{C^{2,\alpha}(\R^n)}\leq C_{m,g,\Omega}^{(2)}|x_1-x_2|.
\end{equation*}
It is noteworthy to see that, in view of the equation \eqref{eq:lin-nl}, $\chi^{ij}(\cdot,x)$ solves
\begin{equation}\label{eq:chi-nl}
a_{rs}(\cdot,x)D_{y_ry_s}\chi^{ij}(\cdot,x)+a_{ij}(\cdot,x)=\ov{a}_{ij}(x)\quad\text{in }\R^n,
\end{equation}
where $\ov{a}_{ij}(x)=\ov{F}_{p_{ij}}(D_x^2u,x)\in C^{m-1,1}(\ov{\Omega})$ whose $C^{m-1,1}(\ov{\Omega})$-norm is bounded above by $C_{m,g,\Omega}^{(2)}$. 

Let us now begin our induction argument. As the first step, we define $\psi_{-1}(x)=\psi_0(x)=\psi_{[\frac{m}{2}]}(x)=\psi_{[\frac{m}{2}]+1}(x)\equiv 0$ on $\ov{\Omega}$ and $\phi_1(y,x)\equiv0$, $\phi_2(y,x)=w(y;D_x^2u,x)$ on $\mbb{R}^n\times\ov{\Omega}$. If $m=2$ or 3, then $w_1(y,x)=0$ and $w_2(y,x)=w(y;D_x^2u,x)$, as we define them according to \eqref{eq:int cor-nl-defn:wk}. The assertions (i) and (ii) of Lemma \ref{lem:int cor-nl} are then immediate from Lemma \ref{lem:key-nl} and Proposition \ref{prop:reg-nl}. Since we have $k\leq 2$ when $m=2$ or 3, the assertion (iii) can be dismissed. Thus, Lemma \ref{lem:int cor-nl} is proved for the case $m=2$ and 3. 

Now we consider the case when $m\geq 4$. One can easily see that $\phi_1$ and $\phi_2$ [resp., $\psi_{-1}$, $\psi_0$, $\psi_{[\frac{m}{2}]}$ and $\psi_{[\frac{m}{2}]+1}$] chosen in the first step still verify (IP1)-(IP2) [resp., (IP3)].

In order to run the induction argument, we choose $3\leq k\leq[\frac{m}{2}]+1$ and suppose that we have already found the families $\{\psi_{l-2}\}_{1\leq l\leq k-1}$, $\{\phi_l\}_{1\leq l\leq k-1}$ and $\{w_l\}_{1\leq l\leq k-1}$ which satisfy (IP1)-(IP3) and Lemma \ref{lem:int cor-nl} respectively. We then define $\tilde{\Phi}_k:\mbb{R}^n\times\ov{\Omega}\ra\mbb{R}$ by
\begin{equation*}\label{eq:int cor-nl-defn:Phik}
\begin{split}
\tilde{\Phi}_k&=a_{ij}D_{x_ix_j}(\phi_{k-2}+\chi^{ab}D_{x_ax_b}\psi_{k-4})+2a_{ij}D_{x_iy_j}(\phi_{k-1}+\chi^{ab}D_{x_ax_b}\psi_{k-3})\\
&\quad+\sum_{l=2}^{k-2}\frac{1}{l!}\sum_{n_1+\cdots+n_l=k-2}a_{i_1j_1\ldots i_lj_l}X_{i_1j_1}^{n_1}\cdots X_{i_lj_l}^{n_l}.
\end{split}
\end{equation*}
One may notice that $\tilde{\Phi}_k$ does not involve the functions $\psi_{r-2}$ and $\phi_r$ for $r\geq k$. 

Consider the following problem: For each $x\in\ov{\Omega}$, there exists a unique constant $\Psi_{k-2}(x)$ such that the following PDE,
\begin{equation}\label{eq:int cor-nl-1}
a_{ij}(\cdot,x)D_{y_iy_j}v+\tilde{\Phi}_k(\cdot,x)=\Psi_{k-2}(x)\quad\text{in }\R^n,
\end{equation}
attains a bounded 1-periodic solution $v$. Note that $a_{ij}(\cdot,x)$ is uniformly elliptic with the ellipticity constants $\lambda$ and $\Lambda$. Moreover, $a_{i_1j_1\ldots i_lj_l}(\cdot,x)$ is 1-periodic and belongs to $C^{m-l,1}(\R^n)$ whose $C^{m-l,1}(\R^n)$-norm is bounded above by a constant $K_{m,l,g,\Omega}$. This fact together with our induction hypotheses, (IP1)-(IP3) and Lemma \ref{lem:int cor-nl} (i) and (ii), yields that $\tilde{\Phi}_k(\cdot,x)\in C^{0,\alpha}(\R^n)$ where its $C^{0,\alpha}(\R^n)$-norm is bounded above by a constant $\tilde{K}_{m,k,g,\Omega}$. Therefore, Lemma \ref{lem:key-l} yields that the PDE \eqref{eq:int cor-nl-1} is solvable with a $C^{2,\alpha}(\R^n)$-solution, and denote it by $\phi_k(\cdot,x)$. In particular, let us choose $\phi_k(\cdot,x)$ such that $\phi_k(0,x)=0$. Since the domain $\Omega$ is bounded, $\phi_k(\cdot,x)\in C^{2,\alpha}(\R^n)$ uniformly for $x\in\ov{\Omega}$ and $\norm{\phi_k(\cdot,x)}_{C^{2,\alpha}(\R^n)}\leq C_{m,k.g,\Omega}$. Therefore, $\phi_k$ verifies (IP1). 

To know the regularity of $\phi_k$ in $x$-variable, we utilize Proposition \ref{prop:reg-nl}. We know that $a_{i_1j_1\ldots i_mj_m}(y,\cdot)\in C^{m-l,1}(\ov{\Omega})$ and its $C^{m-l,1}(\ov{\Omega})$-norm is bounded above by $L_{m,k,g,\Omega}$. Then again by using our induction hypotheses, we obtain $\tilde{\Phi}_k(y,\cdot)\in C^{m-2k+4,1}(\ov{\Omega})$ whose $C^{m-2k+4,1}(\ov{\Omega})$-norm is bounded above by $\tilde{L}_{m,k,g,\Omega}$. Thus, Proposition \ref{prop:reg-nl} implies that both $\Psi_{k-2}$ and $\phi_k(y,\cdot)$ belong to $C^{m-2k+4,1}(\ov{\Omega})$ with the estimate that $\max\{\norm{\Psi_{k-2}}_{C^{m-2k+4,1}(\ov{\Omega})},\norm{\phi_k(y,\cdot)}_{C^{m-2k+4,1}(\ov{\Omega})}\}\leq \tilde{C}_{m,k,g,\Omega}$; in particular, we obtain for any $x_1,x_2\in\ov{\Omega}$ that
\begin{equation*}\label{eq:int cor-nl-1-x-C2a}
\sum_{i=0}^{m-2k+3}\norm{D_x^i\phi_k(\cdot,x_1)-D_x^i\phi_k(\cdot,x_2)}_{C^{2,\alpha}(\R^n)}\leq\tilde{C}_{m,k,g,\Omega}|x_1-x_2|.
\end{equation*}
Hence, $\phi_k$ satisfies (IP2) as well.

Moreover, we choose the function $\psi_{k-2}:\ov{\Omega}\ra\mbb{R}$ by the solution of 
\begin{equation}\label{eq:int cor-nl-2}
\begin{cases}
\ov{a}_{ij}D_{x_ix_j}\psi_{k-2}=-\Psi_{k-2}&\text{in }\Omega,\\
\psi_{k-2}=0&\text{on }\p\Omega.
\end{cases}
\end{equation}
Recall from Lemma \ref{lem:prop:nl} that $\bar{a}_{ij}$ is uniformly elliptic in $\ov{\Omega}$ with the ellipticity constants $\lambda$ and $\Lambda$. Also Proposition \ref{prop:reg-nl} implies that $\bar{a}_{ij}\in C^{m-1,1}(\ov{\Omega})$ whose $C^{m-1,1}(\ov{\Omega})$-norm is bounded above by $C_{m,g,\Omega}$. Since $\Psi_{k-2}\in C^{m-2k+4,1}(\ov{\Omega})$, there exists a unique solution $\psi_{k-2}\in C^{m-2k+6,1}(\ov{\Omega})$ of \eqref{eq:int cor-nl-2} and 
\begin{equation*}\label{eq:int cor-nl-2-x}
\norm{\psi_{k-2}}_{C^{m-2k+6,1}(\ov{\Omega})}\leq C_{\norm{\bar{a}_{ij}}_{C^{m-1,1}(\ov{\Omega})},\Omega}(\norm{\psi}_{L^\infty(\Omega)}+\norm{\Psi}_{C^{m-2,1}(\ov{\Omega})})\leq\bar{C}_{m,k-2,g,\Omega}.
\end{equation*}
Thus, $\psi_{k-2}$ satisfies the induction hypothesis (IP3).

Define $v_k:\mbb{R}^n\times\ov{\Omega}\ra\mbb{R}$ by 
\begin{equation*}
v_k(y,x):=\phi_k(y,x)+\chi^{ij}(y,x)D_{x_ix_j}\psi_{k-2}(x). 
\end{equation*}
It then follows from the observations above that $v_k(\cdot,x)\in C^{2,\alpha}(\R^n)$ with the estimate $\norm{v_k(\cdot,x)}_{C^{2,\alpha}(\R^n)}\leq A_{m,k,g,\Omega}$ and that $v_k(y,\cdot)\in C^{m-2k+2,1}(\ov{\Omega})$ with the estimate $\norm{v_k(y,\cdot)}_{C^{m-2k+2,1}(\ov{\Omega})}\leq \tilde{A}_{m,k,g,\Omega}$; furthermore, we have for any pair of $x_1,x_2\in\ov{\Omega}$ that
\begin{equation*}
\sum_{i=0}^{m-2k+1}\norm{D_x^iv_k(\cdot,x_1)-D_x^iv_k(\cdot,x_2)}_{C^{2,\alpha}(\R^n)}\leq\tilde{A}_{m,k,g,\Omega}|x_1-x_2|.
\end{equation*}
One may also check that $A_{m,k,g,\Omega}=C_{m,k,g,\Omega}+C_{m,g,\Omega}^{(1)}\bar{C}_{m,k-2,g,\Omega}$ and $\tilde{A}_{m,k,g,\Omega}=\tilde{C}_{m,k,g,\Omega}+C_{m,g,\Omega}^{(2)}\bar{C}_{m,k-2,g,\Omega}$. Moreover, we combine \eqref{eq:int cor-nl-1} and \eqref{eq:int cor-nl-2} and obtain that
\begin{equation*}
\begin{split}
&a_{ij}(\cdot,x)D_{y_iy_j}v_k(\cdot,x)+\Phi_k(\cdot,x)\\
&=a_{ij}(\cdot,x)D_{y_iy_j}\phi_k(\cdot,x)+\tilde{\Phi}_k(\cdot,x)+[a_{rs}(\cdot,x)D_{y_ry_s}\chi^{ij}(\cdot,x)+a_{ij}(\cdot,x)]D_{x_ix_j}\psi_{k-2}(x)\\
&=\Psi_{k-2}(x)+\ov{A}_{ij}D_{x_ix_j}\psi_{k-2}(x)\\
&=0\quad\text{in }\mbb{R}^n.
\end{split}
\end{equation*} 
Hence, $v_k$ satisfies Lemma \ref{lem:int cor-nl}.

We have obtained so far $\psi_{k-2}$, $\phi_k$ and $v_k$ which satisfy (IP1)-(IP3) and Lemma \ref{lem:int cor-nl} respectively. Now we apply the same argument above using 
\begin{equation*}\label{eq:int cor-nl-defn:Phik}
\begin{split}
\hat{\tilde{\Phi}}_{k+1}&=a_{ij}D_{x_ix_j}(\phi_{k-1}+\chi^{ab}D_{x_ax_b}\psi_{k-3})+2a_{ij}D_{x_iy_j}(\phi_k+\chi^{ab}D_{x_ax_b}\psi_{k-2})\\
&\quad+\sum_{l=2}^{k-1}\frac{1}{l!}\sum_{n_1+\cdots+n_l=k-1}a_{i_1j_1\ldots i_lj_l}\hat{X}_{i_1j_1}^{n_1}\cdots\hat{X}_{i_lj_l}^{n_l},
\end{split}
\end{equation*}
where $\hat{X}_{i_rj_r}^l=X_{i_rj_r}^l$ for $1\leq l\leq k-3$ and $\hat{X}_{i_rj_r}^{k-2}=D_{x_{i_r}x_{j_r}}w_{k-2}+2D_{x_{i_r}y_{j_r}}w_{k-1}+D_{y_{i_r}y_{j_r}}v_k$. Then we obtain $\psi_{k-1}$, $\phi_{k+1}$ and $v_{k+1}$ which satisfy (IP1)-(IP3) and Lemma \ref{lem:int cor-nl} respectively. Applying the same argument once again using
\begin{equation*}\label{eq:int cor-nl-defn:Phik}
\begin{split}
\hat{\tilde{\Phi}}_{k+2}&=a_{ij}D_{x_ix_j}(\phi_k+\chi^{ab}D_{x_ax_b}\psi_{k-2})+2a_{ij}D_{x_iy_j}(\phi_{k+1}+\chi^{ab}D_{x_ax_b}\psi_{k-1})\\
&\quad+\sum_{l=2}^k\frac{1}{l!}\sum_{n_1+\cdots+n_l=k}a_{i_1j_1\ldots i_lj_l}\hat{X}_{i_1j_1}^{n_1}\cdots\hat{X}_{i_lj_l}^{n_l},
\end{split}
\end{equation*}
where $\hat{X}_{i_rj_r}^l=X_{i_rj_r}^l$ for $1\leq l\leq k-3$, $\hat{X}_{i_rj_r}^{k-2}=D_{x_{i_r}x_{j_r}}w_{k-2}+2D_{x_{i_r}y_{j_r}}w_{k-1}+D_{y_{i_r}y_{j_r}}v_k$ and $\hat{X}_{i_rj_r}^{k-1}=D_{x_{i_r}x_{j_r}}w_{k-1}+2D_{x_{i_r}y_{j_r}}v_k+D_{y_{i_r}y_{j_r}}v_{k+1}$, we get $\psi_k$, $\phi_{k+2}$ and $v_{k+2}$ satisfying (IP1)-(IP3) and Lemma \ref{lem:int cor-nl} respectively. 

Now let us define $w_k$ as in \eqref{eq:int cor-nl-defn:wk}; i.e., $w_k(y,x)=v_k(y,x)+\psi_k(x)$. Then $w_k$ satisfies Lemma \ref{lem:int cor-nl}; in particular, the estimates are satisfied with the constant $\max\{A_{m,k,g,\Omega}+\tilde{A}_{m,k,g,\Omega}\}+\bar{C}_{m,k,g,\Omega}$. In addition, one can check that 
\begin{equation*}\label{eq:int cor-nl-defn:Phik}
\begin{split}
\hat{\tilde{\Phi}}_{k+1}&=a_{ij}D_{x_ix_j}(\phi_{k-1}+\chi^{ab}D_{x_ax_b}\psi_{k-3})+2a_{ij}D_{x_iy_j}(\phi_k+\chi^{ab}D_{x_ax_b}\psi_{k-2})\\
&\quad+\sum_{l=2}^{k-1}\frac{1}{l!}\sum_{n_1+\cdots+n_l=k-1}a_{i_1j_1\ldots i_lj_l}X_{i_1j_1}^{n_1}\cdots X_{i_lj_l}^{n_l}\\
&=:\tilde{\Phi}_{k+1},
\end{split}
\end{equation*}
which implies that the functions $\psi_{k-1}$ and $\phi_{k+1}$ are not changed by replacing $v_k$ by $w_k$ in the induction argument. Therefore, our induction argument runs through $k=3,\cdots,[\frac{m}{2}]+1$, by which we obtain the families $\{\psi_{k-2}\}_{1\leq k\leq[\frac{m}{2}]+1}$, $\{\phi_k\}_{1\leq k\leq[\frac{m}{2}]+1}$ and $\{w_k\}_{1\leq k\leq[\frac{m}{2}]+1}$, where $w_{[\frac{m}{2}]}=v_{[\frac{m}{2}]}$ and $w_{[\frac{m}{2}]+1}=v_{[\frac{m}{2}]+1}$. Recall that we have chosen $\psi_{[\frac{m}{2}]}=\psi_{[\frac{m}{2}]+1}\equiv 0$. Thus, we have constructed all the desired families $\{\psi_k\}_{-1\leq k\leq[\frac{m}{2}]+1}$, $\{\phi_k\}_{1\leq k\leq[\frac{m}{2}]+1}$ and $\{w_k\}_{1\leq k\leq[\frac{m}{2}]+1}$ which satisfy (IP1)-(IP3) and Lemma \ref{lem:int cor-nl} respectively. It completes our proof.
\end{proof} 

\begin{rem} As we note in the remark below Proposition \ref{prop:reg-nl}, we see how the coupling effect contribute to the regularity of $x\mapsto w_k(y,x)$. If the $x$ and $y$-variables were decoupled, we would have obtained $w_k(\cdot,x)\in C^{m-k+2,1}(\ov{\Omega})$.
\end{rem}

To this end we define the $k$-th order interior corrector $w_k^\e$ of \eqref{eq:main-l} for each $1\leq k\leq[\frac{m}{2}]+1$ and $\e>0$ by 
\begin{equation}\label{eq:int cor-nl-form-e}
w_k^\e(x)=w_k\left(\frac{x}{\e},x\right)\quad(x\in\ov{\Omega}),
\end{equation}
where $w_k$'s are given in accordance with Lemma \ref{lem:int cor-nl}, and define $\eta_m^\e:\ov{\Omega}\ra\mbb{R}$ by
\begin{equation}\label{eq:int cor-nl-eta}
\eta_m^\e=u+\e w_1^\e+\cdots+\e^{[\frac{m}{2}]+1}w_{[\frac{m}{2}]+1}^\e.
\end{equation}

Now we are in position to introduce the boundary layer corrector. The underlying idea of seeking the boundary layer corrector is the same as in the linear case; we correct the boundary oscillation occurred by the interior correctors by solving the corresponding boundary value problem (c.f. \eqref{eq:bdry cor-l}). Due to the nonlinearity of the problem \eqref{eq:nl}, however, we cannot find the boundary layer corrector in an order-wise manner. Instead, we consider a boundary value problem which involves the entire boundary oscillation caused by the interior correctors; i.e., we solve for each $\e>0$ the following PDE,
\begin{equation}\label{eq:bdry cor-nl}
\begin{cases}
F\left(D^2\eta_m^\e+D^2\theta_m^\e,x,\e^{-1}x\right)=F\left(D^2\eta_m^\e,x,\e^{-1}x\right)&\text{in }\Omega,\\
\theta_m^\e=-\eta_m^\e+g&\text{on }\p\Omega.
\end{cases}
\end{equation}
One may notice from Lemma \ref{lem:int cor-nl} that $\eta_m^\e\in C^2(\ov{\Omega})$ that the right hand side of \eqref{eq:bdry cor-nl} is a uniformly continuous function on $\ov{\Omega}$ for each $\e>0$. Thus, Perron's method (e.g., Theorem \ref{thm:exist-nl-apndx}) ensures the unique existence of a viscosity solution $\theta_m^\e\in C(\ov{\Omega})$ of \eqref{eq:bdry cor-nl}.

\subsection{Proof of Main Theorem II}\label{sec:pf main-nl}

We shall now prove Main Theorem II.

\begin{proof}[Proof of Theorem \ref{thm:main-nl}]  Suppose that $m\geq 4$. The first part of the proof verifies the discussion we made in the beginning of the previous subsection. Fix $\e_*\in(0,1)$ and pick any $\e>0$. We will skip the calculation if it has already been done in the previous subsection. 

In what follows let us denote by $r_m$ the positive integer $[\frac{m}{2}]+1$. We choose the family $\{w_k\}_{1\leq k\leq r_m}$ from Lemma \ref{lem:int cor-nl}. Next we define the family $\{X^k\}_{1\leq k\leq r_m}$ as in \eqref{eq:Xk-nl} and then the function $Y^{r_m}$ as in \eqref{eq:Yr-nl}. By Lemma \ref{lem:int cor-nl} (i)-(ii), we have a uniform bound on the matrix norm of $X^k$, which is independent of $\e$, namely,
\begin{equation}\label{eq:main-nl-linf-Xk}
\norm{X^k(\cdot/\e,\cdot)}_{L^\infty(\Omega)}\leq C_{m,k,g,\Omega}.
\end{equation}
It is then immediately follows that
\begin{equation}\label{eq:main-nl-linf-Ym}
\sup_{0<\e\leq\e_*}\norm{Y^{r_m}(\cdot/\e,\cdot)}_{L^\infty(\ov{\Omega})}\leq(1-\e_*)L_*\frac{1-\e^{r_m}}{1-\e}<L_*
\end{equation}
where $L_*=(1-\e_*)^{-1}\max\{1,C_{m,1,g,\Omega},\ldots,C_{m,r_m,g,\Omega}\}$. 

In the rest of this proof, we set $\e\in(0,\e_*]$ to be fixed. We choose any $x\in\Omega$ and adopt the Taylor expansion of $F(D^2\eta_m^\e,x,x/\e)$ with respect to the $M$-variable up to $(r_m-1)$-th order. For brevity, we omit the dependency on $(\e^{-1}x,x)$. Then, by the choice of our interior correctors $w_k^\e$, we end up with
\begin{equation}\label{eq:main-nl-te}
\begin{split}
F(D^2\eta_m^\e)&=F(X^0+\e Y^{r_m})\\
&=F(X^0)+\sum_{k=1}^{r_m-1}\frac{\e^k}{k!}F_{p_{i_1j_1}\ldots p_{i_kj_k}}(X^0)Y^{r_m}_{i_1j_1}\cdots Y^{r_m}_{i_kj_k}+R_m^\e\\
&=F(X^0)+\sum_{k=1}^{r_m-1}\e^k\sum_{l=1}^k\frac{1}{l!}\sum_{n_1+\cdots+n_l=k}F_{p_{i_1j_1}\ldots p_{i_lj_l}}(X^0)X^{n_1}_{i_1j_1}\cdots X^{n_l}_{i_lj_l}+\tilde{R}_m^\e\\
&=\tilde{R}_m^\e,
\end{split}
\end{equation}
where 
\begin{equation*}
\begin{split}
R_m^\e&=\frac{\e_0^{r_m}}{r_m!}F_{p_{i_1j_1}\ldots p_{i_{r_m}i_{r_m}}}(X^0)Y_{i_1j_1}^{r_m}\cdots Y_{i_{r_m}j_{r_m}}^{r_m}\quad\text{for some }\e_0\in[0,\e],\\
\tilde{R}_m^\e&=R_m^\e+\sum_{k=1}^{r_m-2}\sum_{r_m-1\leq n_1+\cdots+n_k\leq r_mk}\frac{\e^{n_1+\cdots+n_k}}{k!}F_{p_{i_1j_1}\ldots p_{i_kj_k}}(X^0)X^{n_1}_{i_1j_1}\cdots X^{n_k}_{i_kj_k}.
\end{split}
\end{equation*} 
One should note that $F_{p_{i_1j_1}\ldots p_{i_kj_k}}(X^0)$ are exactly the coefficients $a_{i_1j_1\ldots i_kj_k}$ appearing in \eqref{eq:int cor-nl}. Now due to \eqref{eq:main-nl-linf-Xk} and \eqref{eq:main-nl-linf-Ym}, there hold $|R_m^\e|\leq\tilde{C}_{m,g,\Omega}L_*^{r_m}\e^{r_m}$, and thus, 
\begin{equation*}\label{eq:main-nl-linf-tRm}
|\tilde{R}_m^\e|\leq|R_m^\e|+\tilde{\tilde{C}}_{m,g,\Omega}L_*^{(r_m-2)r_m}\e^{r_m-1}\leq C_0\e^{r_m-1}.
\end{equation*}

The second part of this proof is devoted to the establishment of the estimate \eqref{eq:main-nl}. The essence is to construct barriers and argue by the comparison principle. Choose $R>0$ in such a way that $\ov{\Omega}\subset B_R(0)$. Consider the functions $\eta_m^{\e,\pm}:\ov{\Omega}\ra\mbb{R}$ defined by
\begin{equation}\label{eq:main-nl-defn:barrier}
\eta_m^{\e,\pm}=\eta_m^\e+\theta_m^\e\pm(2\lambda)^{-1}C_0\e^{r_m-1}(R^2-|x|^2)\quad(x\in\ov{\Omega}).
\end{equation}
By the uniform ellipticity of $F$ (structure condition (F2)) and the choice of the boundary layer corrector \eqref{eq:bdry cor-nl}, there holds 
\begin{equation*}
\begin{split}
F(D^2\eta_m^{\e,+})&\leq F(D^2\eta_m^\e+D^2\theta_m^\e)-C_0\e^{r_m-1}=F(D^2\eta_m^\e)-C_0\e^{r_m-1}\leq 0
\end{split}
\end{equation*}
in the viscosity sense, and $\eta_m^{\e,+}|_{\p\Omega}\geq\eta_m^\e+\theta_m^\e=g$. Thus, $\eta_m^{\e,+}$ is a viscosity supersolution of \eqref{eq:nl}. In a similar manner, one can verify that $\eta_m^{\e,-}$ is a viscosity subsolution of \eqref{eq:nl}. Thus, the comparison principle yields $\eta_m^{\e,-}\leq u^\e\leq\eta_m^{\e,+}\quad\text{in }\ov{\Omega}$. It then follows that
\begin{equation*}
\norm{u^\e-\eta_m^\e-\theta_m^\e}_{L^\infty(\Omega)}\leq(2\lambda)^{-1}C_0\e^{r_m-1},
\end{equation*}
which proves \eqref{eq:main-nl}.

The proof for the case $m=2$ or 3 shares the same idea presented above, but is simpler. In this case, $\eta_m^\e(x)=u(x)+\e^2w_2(\e^{-1}x,x)$, and thus, we do not need the expansion \eqref{eq:main-nl-te}; instead we can directly argue as in the second part. The rest of the proof is exactly the same, so is omitted.  
\end{proof}

\appendix
\section{Existence and Regularity Theory of Uniformly Elliptic Equations}\label{sec:nl-apndx}

Set $\Omega$ to be a bounded domain of $\R^n$ and $F\in C(\mc{S}^n\times\R^n)$ be uniformly elliptic with ellipticity constants $0<\lambda\leq\Lambda$. Also let $f\in C(\R^n)$. The notion of viscosity solutions can be found in many literatures; e.g., see \cite{CIL} and \cite{CC}. 

\begin{thm}\label{thm:exist-nl-apndx} Suppose that $F$ is 1-periodic in $y$ and $\delta>0$. 
\begin{enumerate}[(a)]
\item (Comparison principle) If $u$ and $v$ are respectively 1-periodic viscosity sub- and super-solution of $F(D^2w,y)-\delta w=0$ in $\R^n$, then $u\leq v$ in $\R^n$. 
\item (Perron's method) If such $u$ and $v$ in (a) exist, then there exists a unique 1-periodic viscosity solution $w$ such that $u\leq w\leq v$ in $\R^n$. 
\end{enumerate}
\end{thm}

\begin{thm}\label{thm:reg-nl-apndx}
\begin{enumerate}[(a)]
\item (Harnack inequality) If $u\in S^*(\lambda,\Lambda,f)$ and $u\geq 0$ in $Q_1$, then $\sup_{Q_{1/2}}u\leq C(\inf_{Q_{1/2}}u+\norm{f}_{L^n(Q_1)})$ for a universal $C>0$.
\item (Interior $C^\alpha$-regularity) If $u\in S^*(\lambda,\Lambda,f)$ in $Q_1$, then $u\in C^\alpha(\ov{Q}_{1/2})$ and $\norm{u}_{C^\alpha(\ov{Q}_{1/2})}\leq C(\norm{u}_{L^\infty(Q_1)}+\norm{f}_{L^\infty(Q_1)})$ for universal $\alpha\in(0,1)$ and $C>0$. 
\item (Liouville theorem) Any bounded below (or above) function in $S(\lambda,\Lambda,0)$ on $\mbb{R}^n$ is constant.
\item (Modulus of continuity) Suppose $u\in S(\lambda,\Lambda,f)$ in $\Omega$, and there is a modulus of continuity $\rho$ of $\vp:=u|_{\p\Omega}$. If $\Omega$ satisfy the uniform exterior sphere condition with radius $R$, then there exists a modulus of continuity $\rho^*$ of $u$ in $\ov{\Omega}$, where $\rho$ depends only on $n$, $\lambda$, $\Lambda$, $\diam(\Omega)$, $R$, $\norm{\vp}_{L^\infty(\Omega)}$, and $\norm{f}_{L^\infty(\Omega)})$.
\item (Interior $C^{2,\alpha}$-regularity) Suppose that $F$ is concave in $M$ and $F(0,0)=f(0)$. Define $\beta(x):=\sup_{M\in\mc{S}^n}\frac{|F(M,)-F(M,0)|}{\norm{M}+1}$. If $\beta,f\in C^\alpha(B_2)$ for some $\alpha\in(0,1)$, and if $F(D^2u,x)=f(x)$ in $B_2$, then $u\in C^{2,\alpha}(\ov{B}_{2/C})$ and $\norm{u}_{C^{2,\alpha}(\ov{B}_{2/C})}^*\leq C(\norm{u}_{L^\infty(B_2)}+\norm{f}_{C^\alpha(B_2)}+1)$, for some $C>1$ depends only on $n$, $\lambda$, $\Lambda$, $\alpha$, and $\norm{\beta}_{C^\alpha(B_2)}$.
\item ($C^{k+2,\alpha}$-regularity) Suppose that $F\in C^{m,1}(\mc{S}^n\times\ov{\Omega})$ and $f\in C^{m,1}(\ov{\Omega})$. If $u\in C^{2,\alpha}(\Omega)$ solves $F(D^2u,x)=f(x)$, then $\p\Omega\in C^{m+2,1}$. Moreover, if $\partial\Omega\in C^{m+2,1}$  and $u|_{\p\Omega}\in C^{m+2,1}$, then $u\in C^{m+2,\alpha}(\Omega)$. 
\end{enumerate}
\end{thm}

\begin{acknowledgements}
Sunghan Kim was supported by NRF(National Research Foundation of Korea) Grant funded by the Korean Government(NRF-2014-Fostering Core Leaders of the Future Basic Science Program). 

Ki-Ahm Lee  was supported by the National Research Foundation of Korea(NRF) grant funded by the Korea government(MSIP) (No.2014R1A2A2A01004618). 
Ki-Ahm Lee also hold a joint appointment with the Research Institute of Mathematics of Seoul National University.

The authors would like to thank for anonymous reviewers for their valuable comments and suggestions. 
\end{acknowledgements}

\end{document}